\DeclareMathOperator{\HS}{\mathrm{HS}}
\DeclareMathOperator{\HF}{\mathrm{HF}}
\DeclareMathOperator{\ini}{\mathrm{in}}
\newcommand{\truncs}[1]{s_{\leq #1}}
\newtheorem{theorem}{Theorem}[section]
\newtheorem{corollary}[theorem]{Corollary}
\newtheorem{lemma}[theorem]{Lemma}
\newtheorem{proposition}[theorem]{Proposition}
\theoremstyle{definition}
\newtheorem{definition}[theorem]{Definition}
\newtheorem{example}[theorem]{Example}
\newtheorem{remark}[theorem]{Remark}
\newtheorem{setup}[theorem]{Setup}
\title[The Gr\"obner basis for powers of a general linear form in a monomial CI]{The Gr\"obner basis for powers of a general linear form in a monomial complete intersection}
\author{Filip Jonsson Kling, Samuel Lundqvist, Fatemeh Mohammadi, Matthias Orth}
 \address{Filip Jonsson Kling, Department of Mathematics, Stockholm University, 106 91 Stockholm, 
 Sweden}
 \email{filip.jonsson.kling@math.su.se}
 \address{Samuel Lundqvist, Department of Mathematics, Stockholm University, 106 91 Stockholm, 
 Sweden}
 \email{samuel@math.su.se}
 \address{Fatemeh Mohammadi, Departments of Mathematics, and Computer Science, 
 KU Leuven, 
 Belgium}
\email{fatemeh.mohammadi@kuleuven.be}
 \address{Matthias Orth, Department of Mathematics, KU Leuven, BE-3001 Leuven, Belgium}
 \email{matthias.orth@kuleuven.be}
\begin{document}

\begin{abstract}
We study almost complete intersection ideals in a polynomial ring, generated by powers of all the variables together with a power of their sum. Our main result is an explicit description of the reduced Gr\"obner bases for these ideals under any term order.
Our approach is primarily combinatorial, focusing on the structure of the initial ideal. We associate a lattice path to each monomial in the vector space basis of an Artinian monomial complete intersection and introduce a reflection operation on these paths, which enables a key counting argument.
As a consequence, we provide a new proof that Artinian monomial complete intersections possess the strong Lefschetz property over fields of characteristic zero. Our results also offer new insights into the longstanding problem of classifying the weak Lefschetz property for such intersections in characteristic $p$. Furthermore, we show that the number of Gr\"obner basis elements in each degree is connected to several well-known sequences, including the (generalized) Catalan, Motzkin, and Riordan numbers, and connect these numbers 
to the study of entanglement detection in spin systems within quantum physics.
\end{abstract}
\maketitle

{\hypersetup{linkcolor=black}
{\tableofcontents}}
\section{Introduction}\label{sec:Introduction}

Monomial complete intersections, in particular ideals generated by pure powers of a subset of the variables in a polynomial ring over a field, form a well-studied class of ideals with several interesting properties. Under a suitable lexicographic ordering of the variables, these ideals are known as Clements–Lindstr\"{o}m ideals~\cite{ClementsLindstroemGeneralizationCombinatorialThmMacaulay}, and their quotient rings satisfy a generalized version of Macaulay's theorem~\cite{MacaulaySomePropertiesEnumeration}, which guarantees the existence of a lexicographic monomial ideal realizing any given admissible Hilbert function. Moreover, the Hilbert scheme over Clements–Lindstr\"{o}m rings is known to be connected~\cite{MuraiPeeva}.

In the Artinian case, when the ideal is generated by powers of all variables, the quotient ring is Gorenstein, and the finite sequence of nonzero coefficients in its Hilbert series is symmetric. Furthermore, Stanley~\cite{Stanley} and Watanabe~\cite{WATANABE1989194} independently proved that over a field
of characteristic zero such rings satisfy the strong Lefschetz property, that is, the multiplication maps induced by a general linear form $\ell$ (and its powers) are of maximal rank. In particular this means that the quotient ring 
\[\mathbf{k}[x_1,\ldots,x_n]/(x_1^{m_1},\ldots,x_n^{m_n},\ell^k) \cong \mathbf{k}[x_1,\ldots,x_n]/(x_1^{m_1},\ldots,x_n^{m_n},(x_1+\cdots+x_n)^k)\]
has the minimal possible Hilbert series among $n+1$ forms of degrees
$(m_1,\ldots,m_n,k)$, which proves the Fr\"oberg conjecture~\cite{Froberg} in the case of $n+1$ forms.

In this paper, we determine all Gr\"obner bases of the ideals
\[I_{n,\mathbf{m},k} = (x_1^{m_1},\ldots,x_n^{m_n}, (x_1+\cdots+x_n)^k). \]
While there is an algorithm to compute the Gr\"{o}bner basis of a given ideal, determining the reduced Gr\"{o}bner bases of  an infinite family of ideals is a much harder that usually requires additional techniques from combinatorics and representation theory. When a Gr\"obner basis has been established, it can also help answer questions in these areas; see e.g.~\cite{HAGLUND2018851, knutson2005annals, sturmfels1990GBs}. 

We determine the Gr\"obner basis by associating a lattice path to each monomial in the vector space basis of the quotient ring of the monomial complete intersection. We then establish a criterion to decide whether the monomial corresponding to such a path lies in the initial ideal of our ideal. This is achieved by exploiting the symmetry inherent in the monomial complete intersection and introducing a reflection map, which enables a crucial counting argument.

After describing the initial ideal, we construct the reduced Gr\"{o}bner basis of the almost complete intersection ideal. This Gr\"{o}bner basis depends only on the variable ordering induced by the given term order, and is obtained by explicitly providing, for each monomial in the initial ideal, a polynomial in the ideal whose leading term is that monomial. This construction constitutes our main result.

\begin{theorem}\label{thm:Main_Thm_Introduction}
Let $n, k\geq 1$ be integers, $\mathbf{m}=(m_1,\ldots,m_n)\in\mathbb{Z}_{\geq 2}^n$ a vector of integers, and $\mathrm{Crit}_{n,\mathbf{m},k,j}$ sets of monomials corresponding to certain lattice paths (see Definition \ref{def:Sets_Crit_n_m_k_j}).
For any monomial $s=x_1^{s_1}\cdots x_j^{s_j}=s'x_j^{s_j}\in \mathrm{Crit}_{n,\mathbf{m},k,j}$, define 
\begin{equation*}
g_s = \sum_{s''|s'} \lambda_{s''} s'' (x_j+\cdots +x_n)^{
    \deg(s)-\deg(s'') }
\end{equation*}
where 
\begin{equation*}
\lambda_{s''}=\left(\prod_{i=1}^{n-1}\frac{s_i!}{s''_i!} \binom{m_i-s''_i-1}{s_i-s''_i} \right)\cdot\frac{s_n!}{(\deg(s)-\deg(s''))!}.
\end{equation*}

Then the reduced Gr\"{o}bner basis of $I_{n,\mathbf{m},k}$ with respect to any monomial ordering $\prec$ with $x_1\succ \cdots \succ x_n$ is
\begin{equation*}
    G_{n,\mathbf{m},k}=\{x_1^{m_1},\ldots,x_n^{m_n}\}\cup \bigcup_{j=1}^n \{ g_s\,\mid\, s\in \mathrm{Crit}_{n,\mathbf{m},k,j}\}\,,
\end{equation*}
where $x_j^{m_j}$ is removed if $k+\sum_{i=1}^{j-1}(m_i-1)<m_j$.
\end{theorem}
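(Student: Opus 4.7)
The proof will proceed in three stages: first, verify that the leading monomial of each $g_s$ is $s$; second, show that $g_s\in I_{n,\mathbf{m},k}$; and third, combine these with the earlier description of $\ini(I_{n,\mathbf{m},k})$ to conclude that $G_{n,\mathbf{m},k}$ is the reduced Gr\"obner basis.

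For the leading monomial, every monomial occurring in $g_s$ factors as $s''\cdot m$ where $s''\mid s'$ and $m$ is a monomial of degree $\deg(s)-\deg(s'')$ in $x_j,\ldots,x_n$. Under any order with $x_1\succ\cdots\succ x_n$, such a product is maximised by first maximising the $(x_1,\ldots,x_{j-1})$-part (which forces $s''=s'$) and then the $x_j$-exponent (which forces $m=x_j^{s_j}$), yielding $s'x_j^{s_j}=s$. The leading contribution arises only from the summand $s''=s'$, with coefficient $\lambda_{s'}$, which one verifies is nonzero for $s\in\mathrm{Crit}_{n,\mathbf{m},k,j}$.

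The main technical step is ideal membership. The plan is to realise $g_s$ as an explicit combination of the generators of $I_{n,\mathbf{m},k}$. Writing $y=x_1+\cdots+x_{j-1}$ and $z=x_j+\cdots+x_n$ so that $\ell=y+z$, one starts from a scalar multiple of $\ell^{\deg(s)}$, which lies in $I_{n,\mathbf{m},k}$ because $\deg(s)\geq k$ for $s\in\mathrm{Crit}_{n,\mathbf{m},k,j}$. Expanding $\ell^{\deg(s)}=(y+z)^{\deg(s)}$ by the multinomial theorem and iteratively reducing modulo $x_1^{m_1},\ldots,x_{j-1}^{m_{j-1}}$ produces a polynomial of the shape $\sum_{s''\mid s'}(\text{coeff})\cdot s''\,z^{\deg(s)-\deg(s'')}$. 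The key claim---and the main obstacle---is that the coefficients produced by this reduction are exactly the stated $\lambda_{s''}$: the factor $\tfrac{s_i!}{s''_i!}\binom{m_i-s''_i-1}{s_i-s''_i}$ for $i<j$ should record the weighted number of ways the iteration moves $s_i-s''_i$ copies of $x_i$ from the $y$-part into the $z$-part, while $\tfrac{s_n!}{(\deg(s)-\deg(s''))!}$ supplies the multinomial normalisation. The identity is expected to follow from a Chu--Vandermonde-type summation; an induction on $j$, with the base case $j=1$ giving $g_s=\lambda_1\ell^{s_1}$ directly in the ideal, provides the natural scaffolding.

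Once the first two steps are in hand, the set of leading monomials of $G_{n,\mathbf{m},k}$ is contained in $\ini(I_{n,\mathbf{m},k})$. By the lattice-path/reflection analysis developed earlier in the paper, this set coincides with the minimal generating set of $\ini(I_{n,\mathbf{m},k})$, and so $G_{n,\mathbf{m},k}$ is a Gr\"obner basis. The excision of $x_j^{m_j}$ when $k+\sum_{i<j}(m_i-1)<m_j$ ensures minimality, since in that regime $x_j^{m_j}$ is divisible by an element of $\bigcup_{j'}\mathrm{Crit}_{n,\mathbf{m},k,j'}$. Reducedness follows because every non-leading monomial of $g_s$ has either a proper divisor $s''\subsetneq s'$ in the $(x_1,\ldots,x_{j-1})$-part or an $x_j$-exponent strictly less than $s_j$; the defining combinatorial conditions on $\mathrm{Crit}_{n,\mathbf{m},k,j}$ then prevent divisibility by any other leading monomial in the basis.
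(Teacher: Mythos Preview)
Your three-stage outline matches the paper's architecture, but Stage~2 contains a genuine error, not just a gap. Reducing a scalar multiple of $\ell^{\deg(s)}$ modulo $(x_1^{m_1},\ldots,x_{j-1}^{m_{j-1}})$ does \emph{not} produce $g_s$: the reduction yields a sum over \emph{all} $\mathbf{m}$-free monomials $s''$ in $x_1,\ldots,x_{j-1}$, each with its multinomial coefficient, whereas $g_s$ is supported only on $s''\mid s'$ and carries the very different coefficients $\lambda_{s''}$. For a concrete check, take $j=2$, $m_1>2$, and $s=x_1x_2$: then $g_s=\tfrac{m_1-1}{2}z^2+x_1z$ with $z=x_2+\cdots+x_n$, while $\ell^2$ reduced modulo $x_1^{m_1}$ is $x_1^2+2x_1z+z^2$, which has an $x_1^2$ term absent from $g_s$. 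The paper's actual mechanism is different: one introduces an auxiliary polynomial
\[
f_s=\sum_{v\mid u}\mu_v\,v\,\ell^{\deg(s)-\deg(v)-k},\qquad u\cdot s'=x_1^{m_1-1}\cdots x_{j-1}^{m_{j-1}-1},
\]
with alternating-sign coefficients $\mu_v$, and proves $g_s=f_s\cdot\ell^k$ in $\mathbf{k}[x_1,\ldots,x_{j-1},y]/(x_1^{m_1},\ldots,x_{j-1}^{m_{j-1}})$ by direct coefficient comparison. The identity of coefficients is exactly an inclusion--exclusion statement (Lemma~\ref{lem:Counting_lemma}), and this is where the binomial $\binom{m_i-s''_i-1}{s_i-s''_i}$ factors arise. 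Your Chu--Vandermonde intuition is in the right family, but the correct combination to start from is $f_s$, not a single power of $\ell$.

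Your reducedness argument in Stage~3 is also incomplete. You assert that the combinatorial conditions on $\mathrm{Crit}_{n,\mathbf{m},k,j}$ prevent any tail monomial of $g_s$ from lying in $\ini(I_{n,\mathbf{m},k})$, but a tail monomial can involve $x_{j+1},\ldots,x_n$ and could a priori be divisible by a critical monomial with $\max>j$. Ruling this out is the content of Lemma~\ref{lem:Tail_terms_closed_form_GB_polys}: one must show that no reflection of the tail monomial's lattice path can be initiated past column $j$, which requires an explicit estimate of the vertical distance to the red line and is not a formal consequence of the definition of $\mathrm{Crit}_{n,\mathbf{m},k,j}$. Finally, your Stage~1 argument (``maximise the $(x_1,\ldots,x_{j-1})$-part first'') is correct in spirit but needs the observation, made precise in the paper, that for any tail monomial $t$ the Laurent monomial $t/s$ has numerator in $\mathbf{k}[x_j,\ldots,x_n]$ and denominator in $\mathbf{k}[x_1,\ldots,x_{j-1}]$; then $x_1\succ\cdots\succ x_n$ and multiplicativity of $\prec$ give $t\prec s$ directly.
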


Among the consequences of this theorem, we provide a new proof that monomial complete intersections over a field of characteristic zero have the strong Lefschetz property. Our results also offer new insight into the longstanding problem of classifying the weak Lefschetz property for monomial complete intersections in characteristic $p$. Moreover, we give a complete classification of the weak Lefschetz property in the equigenerated case, in terms of initial ideals, for $n \geq 5$.

The Gr\"{o}bner bases described in Theorem~\ref{thm:Main_Thm_Introduction} are remarkable for their enumerative properties. The integer sequences obtained by counting the elements of $G_{n,\mathbf{m},k}$ in each degree with non-trivial initial monomial, for special choices of $\mathbf{m}$ and $k$, include several well-known sequences as special cases. For instance, setting $\mathbf{m} = (2, \ldots, 2)$ yields convolutions of Catalan numbers, while $\mathbf{m} = (3, \ldots, 3)$ produces convolutions of the Riordan and Motzkin numbers. By selecting appropriate subsequences with $k=1$ and constant $\mathbf{m} = (m, \ldots, m)$, one obtains the $s$-Catalan and spin-$s$-Catalan numbers, the latter appearing in the quantum physics literature~\cite{Cohen_EtAl_Entanglement, curtright2017spin}. Both sequences have been shown to relate to Littlewood–Richardson coefficients in~\cite{Linz_s_Catalan}.

Our lattice path construction thus provides combinatorial interpretations for these and many other sequences. Conversely, we use the algebraic interpretation of these lattice paths to derive combinatorial results, such as the log-concavity of the rows of the $(m-1)$-Catalan triangle.

\medskip

\noindent \textbf{Related works.} In~\cite{booth2024weak}, the authors computed the initial ideal of the ideal generated by the squares of the variables together with the square of a linear form, and extended this analysis to the case where the generators are the respective cubes. In~\cite{kling2024grobnerbasesresolutionslefschetz}, all reduced Gr\"obner bases were determined for the ideal generated by the squares of the variables and an arbitrary power of the linear form.
The initial ideals in the square case were used in~\cite{booth2024weak} to classify when the quotient ring fails the weak Lefschetz property (WLP) due to injectivity, while~\cite{kling2024grobnerbasesresolutionslefschetz} provided a complete classification of the ideals generated by the squares of the variables and any power of the linear form with respect to the WLP.
Homological questions have also been addressed. In~\cite{Diethorn_EtAl_2025_ArXiv}, the Betti numbers of the ideal generated by the squares of the variables and the square of the linear form were determined, while~\cite{kling2024grobnerbasesresolutionslefschetz} computed the Betti numbers of the corresponding initial ideals in the squarefree algebra case, for arbitrary powers of the linear form.

\medskip
\noindent{\bf Outline.} In Section~\ref{sec:Preliminaries}, we recall basic definitions and some known results that we will make use of in our paper. In Section~\ref{sec:InitialIdeal}, we introduce our lattice path constructions and a monomial ideal induced by certain lattice paths. In Section~\ref{sec:GB}, we obtain our main result by showing that the monomial ideal defined in Section~\ref{sec:InitialIdeal} is actually the initial ideal of the ideal we study, and by obtaining the reduced Gr\"{o}bner basis. Section~\ref{sec:Applications} is devoted to special cases and applications.

\section{Background and preliminaries}\label{sec:Preliminaries}

In this section, we fix our notation and recall some key notions.
Let $\mathbf{k}$ be a field of characteristic zero, and let $R = \mathbf{k}[x_1, \ldots, x_n]$ be the polynomial ring in $n$ variables. A \emph{monomial} $s \in R$ is a power product $x_1^{s_1} \cdots x_n^{s_n}$ with $s_i \in \mathbb{Z}_{\geq 0}$. We work with the standard grading on $R$, where each variable $x_i$ has degree 1. Consequently, the \emph{degree} of a monomial $s$ is $\deg(s) = \sum_{i=1}^n s_i$. The degree of $s$ with respect to the variable $x_i$ is denoted $s_i$. 
We write $\max(s)$ for the largest index $i$ such that $x_i$ divides $s$, i.e., the largest $i$ for which $\deg_{x_i}(s) > 0$.
A \emph{monomial ideal} is an ideal generated by monomials. Every monomial ideal admits a unique minimal generating set. A polynomial $f \in R$ is called \emph{homogeneous} of degree $d$, or a \emph{form} of degree $d$, 
if it is a $\mathbf{k}$-linear combination of monomials of degree $d$. An ideal is \emph{homogeneous} if it can be generated by homogeneous polynomials.

\subsection{Gr\"{o}bner bases}
Let $\mathrm{Mon}(R)$ denote the set of all monomials in $R$. A total ordering $\prec$ of $\mathrm{Mon}(R)$ is called an \emph{(admissible) monomial ordering} if it satisfies
\begin{enumerate}
\item $\forall\; s\in \mathrm{Mon}(R)\setminus\{1\},\; 1\prec s$.
\item $\forall\; s,t,u\in \mathrm{Mon}(R),\; s\prec t\;\Longrightarrow\; u\cdot s\prec u\cdot t$.
\end{enumerate}
A well-known ordering is the \emph{reverse lexicographic ordering} $\prec$ with $x_1 \succ \cdots \succ x_n$, which is defined as follows. For any two monomials $s = x_1^{s_1} \cdots x_n^{s_n}$ and $t = x_1^{t_1} \cdots x_n^{t_n}$, we have $s \prec t$ if and only if, for the largest index $i$ where $s_i \neq t_i$, it holds that $s_i > t_i$. For example, for $n = 3$ and $x_1 \succ x_2 \succ x_3$, the monomials $s = x_1^2 x_3^3$ and $t = x_1 x_2 x_3^3$ satisfy $s \succ t$ in this ordering.

\medskip

The monomials in $R$ form a $\mathbf{k}$-basis, so each $f \in R$ has a unique expression as a $\mathbf{k}$-linear combination of monomials. The \emph{support} $\mathrm{supp}(f)$ is the set of monomials in $f$ with nonzero coefficients. Given a monomial order $\prec$, the \emph{initial monomial} $\ini_{\prec}(f)$ of a nonzero $f \in R$ is the largest monomial in $\mathrm{supp}(f)$ with respect to $\prec$. For a nonzero ideal $I \subseteq R$, the \emph{initial ideal} is $\ini_{\prec}(I) = (\ini_{\prec}(f) \mid f \in I \setminus \{0\})$. We omit the subscript $\prec$ when the order is clear.

Let $\prec$ be a monomial ordering, and let $I \subseteq R$ be a nonzero polynomial ideal. A finite set $G = \{g_1, \ldots, g_r\} \subset I \setminus \{0\}$ is called a \emph{Gr\"{o}bner basis} of $I$ if the initial ideal $\ini(I)$ is generated by the set of initial monomials $\ini(G) := \{\ini(g_1), \ldots, \ini(g_r)\}$. 
The Gr\"{o}bner basis $G$ is called \emph{minimal} if $\ini(G)$ is the minimal monomial generating set of $\ini(I)$. It is called \emph{reduced} if it is minimal and, for each $1 \leq i \leq r$, the coefficient of $\ini(g_i)$ in $g_i$ is $1$, and the set of tail monomials $\mathrm{supp}(g_i) \setminus \{\ini(g_i)\}$ is disjoint from $\ini(I)$.

The reduced Gr\"{o}bner basis of a nonzero polynomial ideal is unique for any fixed term order.

\subsection{Hilbert series}
Let $M$ be a finitely generated graded $R$-module concentrated in non-negative degrees. For each integer $d \geq 0$, let $M_d$ denote its degree-$d$ component and $\HF(M,d)=\dim_{\mathbf{k}}(M_d)$ its dimension. The \emph{Hilbert series} of $M$ is the formal power series
$\HS(M;t) = \sum_{d=0}^{\infty} \HF(M,d) \, t^d$.

In this paper, the graded algebras $A$ we consider are all Artinian, so $\HF(A,d) = 0$ for $d \gg 0$, and hence their Hilbert series are polynomials.

\begin{remark}\label{rem:Hilbert_series_via_initial_ideals_prelims}
Let $I \subseteq R$ be a nonzero homogeneous ideal, and let $\prec$ be a monomial ordering compatible with degrees, i.e., $\deg(s) < \deg(u)$ implies $s \prec u$. Then the Hilbert series of the quotient rings $R/I$ and $R/\ini_{\prec}(I)$ are equal
\[
\HS(R/I; t) = \HS(R/\ini_{\prec}(I); t).
\]
\end{remark}
\subsection{Weak and strong Lefschetz properties}
Consider an \emph{Artinian} quotient rings $A = R/I$, where $I$ is an ideal of $R$ and $A$ has finite $\mathbf{k}$-dimension. The \emph{socle degree} of $A$ is the largest integer $d \geq 0$ such that the degree $d$ component $A_d$ is nonzero. In these definitions, we use the notion of a \emph{general linear form}, i.e., a nonzero polynomial $\ell = \lambda_1 x_1 + \cdots + \lambda_n x_n \in R$ whose coefficient vector $(\lambda_1, \ldots, \lambda_n)$ lies outside a proper Zariski-closed subset of $\mathbf{k}^n$.

An Artinian algebra $A = R/I$, where $I \subset R$ is a homogeneous ideal, is said to have the \emph{weak Lefschetz property} (WLP) if, for all integers $d \geq 0$, the multiplication maps
\begin{equation}\label{eq:def_WLP}
    \mu_{\ell,d} \colon A_d \to A_{d+1}, \quad f \mapsto \ell \cdot f
\end{equation}
have maximal rank (i.e., are either injective or surjective). 
Similarly, $A$ is said to have the \emph{strong Lefschetz property} (SLP) if, for all integers $d, e \geq 0$, the multiplication maps
\begin{equation}\label{eq:def_SLP}
    \mu_{\ell,d,e} \colon A_d \to A_{d+e}, \quad f \mapsto \ell^e \cdot f
\end{equation}
have maximal rank.
\medskip

Note that if there is a linear form $\ell\in R$ such that multiplication by this form induces a map that has full rank, then a general linear form will also give a map that has full rank. Hence we may equivalently say that an algebra $A$ has the WLP (or SLP) if there is a linear form for which all the maps in \eqref{eq:def_WLP} (or in \eqref{eq:def_SLP}) have full rank. Moreover, if $A$ is defined by a monomial ideal, then it suffices to consider $\ell=x_1+\cdots + x_n$; see \cite[Theorem 2.2]{Nicklasson_SLP_two_variables}. 

We conclude this section by reviewing some properties 
of Artinian monomial complete intersections that we will make use of.

\begin{remark}\label{rem:Properties_Artinian_monomial_CIs}
Monomial ideals $P_{n,\mathbf{m}}=(x_1^{m_1},\ldots,x_n^{m_n})\subset R$, where $\mathbf{m}=(m_1,\ldots,m_n)$ is a vector of positive integers, define Artinian complete intersection algebras $A_{n,\mathbf{m}}=R/P_{n,\mathbf{m}}$ with the following properties. See \cite{book} and the references therein for details.
\begin{enumerate} 
\item $A_{n,\mathbf{m}}$ has the strong Lefschetz property.
\item The socle degree $D_{n,\mathbf{m}}$ of $A_{n,\mathbf{m}}$ is given by $D_{n,\mathbf{m}} = \sum_{i=1}^n (m_i - 1)$.
\item The Hilbert series of $A_{n,\mathbf{m}}$ is symmetric. That is, for any integer $0 \leq d \leq D_{n,\mathbf{m}}$, $\HF(A_{n,\mathbf{m}},d)=\HF(A_{n,\mathbf{m}},D_{n,\mathbf{m}} - d)$.
\item The Hilbert series of $A_{n,\mathbf{m}}$ is unimodal.
\end{enumerate}
\end{remark}

\section{Monomial ideals associated to certain lattice paths}\label{sec:InitialIdeal}
Throughout, we fix the polynomial ring $R = \mathbf{k}[x_1, \ldots, x_n]$, with positive integers $n, k$, and a tuple $\mathbf{m} = (m_1, \ldots, m_n) \in \mathbb{Z}_{\geq 2}^n$.
We consider the ideals
\[
I_{n, \mathbf{m}, k} = (x_1^{m_1}, \ldots, x_n^{m_n}, (x_1 + \cdots + x_n)^k)
\quad\text{and}\quad
P_{n, \mathbf{m}} = (x_1^{m_1}, \ldots, x_n^{m_n}).
\]
In this section, we define and study a monomial ideal $M_{n,\mathbf{m},k}$ (Definition~\ref{def:Ideal_of_critical_paths}), which we will later identify as the initial ideal of $I_{n,\mathbf{m},k}$ with respect to the reverse lexicographic term ordering. Our main result, Theorem~\ref{thm:Gen_sys_for_critical_path_ideal} provides a minimal generating set for $M_{n,\mathbf{m},k}$, described explicitly in terms of monomials associated with critical $\mathbf{m}$-admissible lattice paths. 
To prove this result, we develop a combinatorial framework based on certain lattice paths that correspond to the monomials spanning the quotient algebra $R/P_{n,\mathbf{m}}$ as a $\mathbf{k}$-vector space. Building on this structure, we show that the union of all such monomials, together with the pure powers $x_1^{m_1},\ldots,x_n^{m_n}$, yields a minimal generating set of the ideal. We also establish that this ideal is strongly $\mathbf{m}$-stable, and closely related to reverse lexicographic segment ideals.

\subsection{Lattice paths and Hilbert series}

We begin by explaining the connection between the lattice paths we will consider and the quotient rings $R/P_{n,\mathbf{m}}$.

\begin{definition}  
A lattice path is called \emph{$\mathbf{m}$-admissible} if it starts at the origin 
and is composed of $n$ steps of the types
\begin{itemize}  
\item $(a,b) \to (a+1, b+1)$ \quad (1-up),  
\item $(a,b) \to (a+1, b)$ \quad (flat),  
\item $(a,b) \to (a+1, b-e+1)$ \quad ($(e-1)$-down, for some $e$ where $m_i-1\geq e \geq 2$).  
\end{itemize}  
\end{definition} 

\begin{definition}\label{defn:MFree_Monomial}
 A monomial $t = x_1^{\alpha_1} \cdots x_n^{\alpha_n}$ in $R$ is called \emph{$\mathbf{m}$-free} if $\alpha_i < m_i$ for all $1 \leq i \leq n$.
\end{definition}

There is a bijection between $\mathbf{m}$-admissible lattice paths and $\mathbf{m}$-free monomials, given by associating each monomial with a path whose $i$-th step corresponds to the exponent of $x_i$~as~follows: 
\begin{itemize}  
\item $x_i^0$ corresponds to a 1-step up,
\item $x_i^1$ corresponds to a flat step,
\item $x_i^e$ corresponds to $(e-1)$ steps down, for $2 \leq e \leq m_i - 1$. 
\end{itemize}  

\medskip
Using this bijection, we make the following observation.

\begin{lemma}\label{lem:path_set_up_mixed_m}
Fix positive integers $n$, $d$ and a vector $\mathbf{m} \in \mathbb{Z}_{\geq 2}^n$. Then the number of $\mathbf{m}$-admissible lattice paths ending at $(n, n-d)$ equals $\HF(R/P_{n,\mathbf{m}},d)$.
\end{lemma}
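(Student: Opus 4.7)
The plan is to exploit the already-established bijection between $\mathbf{m}$-admissible lattice paths and $\mathbf{m}$-free monomials, refining it by tracking the endpoint of the path in terms of the degree of the corresponding monomial. The statement will then reduce to the basic observation that $\mathbf{m}$-free monomials form a monomial $\mathbf{k}$-basis of $R/P_{n,\mathbf{m}}$.

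Concretely, I would first recall that since $P_{n,\mathbf{m}}=(x_1^{m_1},\ldots,x_n^{m_n})$ is a monomial ideal, its set of standard monomials consists of all monomials not divisible by any $x_i^{m_i}$, i.e., precisely the $\mathbf{m}$-free monomials of Definition~\ref{defn:MFree_Monomial}. Restricting to degree $d$, they form a $\mathbf{k}$-basis of $(R/P_{n,\mathbf{m}})_d$, so
\[
\HF(R/P_{n,\mathbf{m}},d)=\#\{\,\mathbf{m}\text{-free monomials of degree }d\,\}.
\]

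Next, I would verify that under the bijection between $\mathbf{m}$-free monomials and $\mathbf{m}$-admissible lattice paths, a monomial of degree $d$ maps to a path ending at $(n,n-d)$. Since each exponent $\alpha_i$ contributes exactly one step (either a 1-up, a flat step, or a single $(\alpha_i-1)$-down step), any $\mathbf{m}$-admissible path corresponding to $t=x_1^{\alpha_1}\cdots x_n^{\alpha_n}$ has $x$-coordinate $n$ at its endpoint. For the $y$-coordinate, the contribution of the $i$-th step is $+1$ if $\alpha_i=0$, $0$ if $\alpha_i=1$, and $-(\alpha_i-1)$ if $\alpha_i\geq 2$; in every case this equals $1-\alpha_i$. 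Summing gives the final $y$-coordinate
\[
\sum_{i=1}^n (1-\alpha_i)=n-\deg(t).
\]
Hence paths ending at $(n,n-d)$ correspond bijectively to $\mathbf{m}$-free monomials of degree $d$, which proves the claim.

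There is no real obstacle here; the argument is essentially a bookkeeping step to set up the combinatorial framework. The only point that warrants a line of explanation is the uniform formula $1-\alpha_i$ for the $y$-displacement of a single step, which is what makes the identification of $n-\deg(t)$ with the height drop transparent.
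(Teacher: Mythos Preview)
Your argument is correct and essentially matches the paper's own proof: both use the bijection with $\mathbf{m}$-free monomials and compute the endpoint height as $n-\deg(t)$. The only cosmetic difference is that the paper phrases the height computation incrementally (each unit increase in an exponent lowers the endpoint by one, anchored at the reference monomial $x_1\cdots x_n$), whereas you write the uniform formula $1-\alpha_i$ per step and sum directly.
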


\begin{proof}
Note that $\HF(R/P_{n,\mathbf{m}},d)$ counts the number of $\mathbf{m}$-free monomials in $n$ variables of degree $d$. By the bijection above, this is equal to the number of $\mathbf{m}$-admissible lattice paths consisting of exactly $n$ steps. Moreover, increasing the exponent of any variable in the monomial decreases the $y$-coordinate of the endpoint of the corresponding path by exactly one. Since the monomial $x_1 x_2 \cdots x_n$ corresponds to a path with only flat steps, it ends at $(n, 0)$. Therefore, a path corresponding to a monomial of degree $d$ must end at $(n, n - d)$, as claimed.
\end{proof}

We aim to define a monomial ideal $(M_{n,\mathbf{m},k}) \supseteq P_{n,\mathbf{m}}$, where membership of an $\mathbf{m}$-free monomial $t$ is determined by a criterion involving the geometric relationship between the lattice path of $t$ and the graph of a specific function, which we now introduce.

\begin{definition}\label{def:Red_Line}  
We define a continuous piecewise linear function $L_{n,\mathbf{m},k} : [0,n] \to \mathbb{R}$ as follows.

First, define the linear function  
\[
L_{n,\mathbf{m},k,1} : [0,1] \to \mathbb{R}, \quad x \mapsto \left(\frac{3 - m_1}{2}\right) x - \frac{k}{2}.
\]

Then, for each $i$ with $2 \leq i \leq n$, define inductively the linear function  
\[
L_{n,\mathbf{m},k,i} : [i-1, i] \to \mathbb{R}, \quad x \mapsto \left(\frac{3 - m_i}{2}\right)(x - (i-1)) + L_{n,\mathbf{m},k,i-1}(i-1).
\]

Finally, the piecewise linear function $L_{n,\mathbf{m},k}$ is defined by  
\[
L_{n,\mathbf{m},k}(x) = L_{n,\mathbf{m},k,i}(x) \quad \text{for } x \in [i-1, i], \quad 1 \leq i \leq n.
\]
\end{definition}

\begin{remark}\label{rem:Triangular_scheme}
For $1 \leq j \leq n$, let $\mathbf{m}_j = (m_1, \ldots, m_j)$.  
We can arrange the coefficients of the Hilbert series  
\[
\HS(\mathbf{k}; t), \quad \HS(\mathbf{k}[x_1]/P_{1,\mathbf{m}_1}; t), \quad \ldots, \quad \HS(\mathbf{k}[x_1,x_2,\ldots,x_{n-1}]/P_{n-1,\mathbf{m}_{n-1}}; t),\quad \HS(R / P_{n,\mathbf{m}}; t)
\]  
as columns in an approximately triangular number scheme. Here, the coefficient $1$ corresponding to $\HS(\mathbf{k}; t)$ is placed at position $(0,0)$, and a coefficient $c_{j,\ell}$ at position $(j,\ell)$ indicates that exactly $c_{j,\ell}$ many $\mathbf{m}_j$-admissible lattice paths end at the point $(j,\ell)$.
This also provides an intuitive interpretation for our definition of $L_{n,\mathbf{m},k}$. The graph of this piecewise linear function interpolates between the points located below the midpoints of the individual columns by a vertical downwards shift of $k/2$.  
See Figure~\ref{fig:Triangular_scheme_example_with_red_line} for an illustration.
\end{remark}

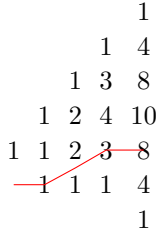
\begin{figure}[!ht]
\begin{tikzpicture}
  \matrix (M) [matrix of math nodes]
  {
     \phantom{0}& \phantom{0} & \phantom{0} & \phantom{0} & 1 \\
     \phantom{0} & \phantom{0} & \phantom{0} & 1 & 4 \\
     \phantom{0} & \phantom{0} & 1 & 3 & 8 \\
     \phantom{0} & 1 & 2 & 4 & 10 \\
    1 & 1 & 2 & 3 & 8 \\
    \phantom{0} & 1 & 1 & 1 & 4 \\
    \phantom{0} & \phantom{0} & \phantom{0} & \phantom{0} & 1 \\
  };
  
\draw[red]
                 ($ (M-6-1) $)--
                 ($ (M-6-2) $)--
                 ($ (M-5-3.south) $)--
                 ($ (M-5-4) $)--
                 ($ (M-5-5) $);
\end{tikzpicture}
\caption{Triangular array of Hilbert series coefficients together with the graph of the piecewise linear function $L_{n, \mathbf{m}, k}$ (shown in red), for parameters $n = 4$, $\mathbf{m} = (3,2,2,3)$, and $k = 2$.}
\label{fig:Triangular_scheme_example_with_red_line}
\end{figure}

When illustrating an $\mathbf{m}$-admissible lattice path, we also display the graph of $L_{n,\mathbf{m},k}$, shown as a thin red piecewise linear curve. The following provides a first example.

\begin{example}\label{ex:Geometric_configuration_Path_and_line}
Let $\mathbf{m} = (3,2,2,3)$, and consider the $\mathbf{m}$-free monomial $t = x_1 x_3 x_4^2$. Figure~\ref{fig:Geometric_configuration_Path_and_line} shows both its lattice path and the graph of $L_{4,\mathbf{m},2}$. Note that the path ends at the point $(4,0) = (4, n - \deg(t))$, as established in Lemma~\ref{lem:path_set_up_mixed_m}.
    \begin{figure}[!ht]
    \centering
    \begin{tikzpicture}
        \draw[help lines] (0,-1) grid (4,1);
        \draw[line width=0.02cm, color=red] (0,-1)--(1,-1)--(2,-0.5)--(3,0)--(4,0);
        \draw[line width=0.1cm] (0,0)--(1,0)--(2,1)--(3,1)--(4,0);
        \draw[left] (0,0) node(origin){\((0,0)\)};
        \draw[right] (4,0) node(path){\((4,0)\)};
    \end{tikzpicture}
    \caption{The graph of \(L_{4,(3,2,2,3),2}\) and the lattice path associated with the monomial \(x_1 x_3 x_4^2\).}
    \label{fig:Geometric_configuration_Path_and_line}
\end{figure}
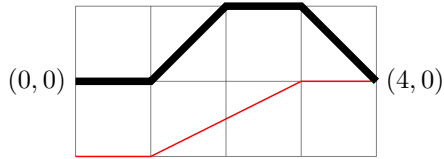
\end{example}
We now introduce a reflection operation on lattice paths associated with the $\mathbf{m}$-free monomials. We will subsequently prove that those lattice paths whose reflections remain admissible correspond bijectively to the $\mathbf{m}$-free monomials in the initial ideal of $I_{n,\mathbf{m},k}$.

\begin{definition}\label{def:reflect}
Let $(a, b)\in \mathbb{Z}^2$ be a lattice point with $0\leq a \leq n$. Reflecting this point at the intersection of the graph of $L_{n,\mathbf{m},k}$ and the vertical line $x = a$ yields a new point $(a, b')$, where
\[
b'= b - 2(b - L_{n,\mathbf{m},k}(a)).
\]
We denote this reflection by $(a,b)' := (a,b')$.
Now consider an $\mathbf{m}$-admissible lattice path $\mathbf{P}$ associated with an $\mathbf{m}$-free term of degree $d$, with vertices $P_0 = (0,0), P_1, \ldots, P_n = (n, n - d)$.
We say that the path $\mathbf{P}$ is \emph{critical} if there exists an index $0 \leq i \leq n$ such that the path  
\[
    {^i\mathbf{P}'} := (P_0, P_1, \ldots, P_{i-1}, P_i', P_{i+1}', \ldots, P_n')
\]  
is also $\mathbf{m}$-admissible.
For each critical path $\mathbf{P}$, define $\lambda(\mathbf{P})$ to be the \emph{maximal} index $i$ for which ${^i\mathbf{P}'}$ remains $\mathbf{m}$-admissible. The \emph{reflection map} associated with 
$(n,\mathbf{m},k)$ is then~defined~as  
\begin{equation*}\label{eq:Reflection_map_for_paths}
    \Lambda : \{ \mathbf{P} \mid \mathbf{P} \text{ critical and } \mathbf{m}\text{-admissible} \}
    \to \{ \mathbf{P} \mid \mathbf{P} \text{ $\mathbf{m}$-admissible} \}, \quad \mathbf{P} \mapsto {^{\lambda(\mathbf{P})}\mathbf{P}'}.
\end{equation*}
\end{definition}

\begin{example}\label{ex:Reflection}
Consider first the lattice path shown in Figure~\ref{fig:Geometric_configuration_Path_and_line}. This path touches the piecewise linear red path at its endpoint and is therefore invariant under the reflection map~$\Lambda$.

Next, consider the lattice path corresponding to the $(4,2,5)$-free monomial $x_1^3 x_3^2$, shown in black in Figure~\ref{fig:ReflectionExample}. Its image under the reflection map~$\Lambda$ is such that the initial two segments remain in black and the reflected portion appears in blue.

    \begin{figure}[!ht]
    \centering
    \begin{tikzpicture}
        \draw[help lines] (0,-4) grid (3,0);
        \draw[line width=0.02cm, color=red] (0,-2)--(1,-2.5)--(2,-2)--(3,-3);
        \draw[line width=0.1cm] (0,0)--(1,-2)--(2,-1)--(3,-2);
        \draw[line width=0.1cm, color=blue] (2,-1)--(3,-4);
        \draw[left] (0,0) node(origin){$(0,0)$};
        \draw[right] (3,-2) node(path){$(3,-2)$};
        \draw[right] (3,-4) node(path){$(3,-4)$};
    \end{tikzpicture}
    \caption{The lattice path $\mathbf{P}$ associated with the $(4,2,5)$-free monomial $x_1^3 x_3^2$ and its reflection $\Lambda(\mathbf{P})$. The reflected path corresponds to the monomial $x_1^3 x_3^4$.}
    \label{fig:ReflectionExample}
    \end{figure}
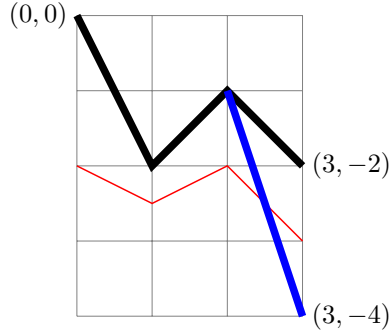
\end{example}

\begin{definition}\label{def:Ideal_of_critical_paths}
    Let $M_{n,\mathbf{m},k}$ be the set of monomials corresponding to critical $\mathbf{m}$-admissible lattice paths. We write $(M_{n,\mathbf{m},k})$ for the ideal generated by the monomials in $M_{n,\mathbf{m},k}+P_{n,\mathbf{m}}$.
\end{definition}

The reflection map~$\Lambda$ satisfies several properties, which we summarize in the following lemma.

\begin{lemma}\label{lem:Basic_reflection_arguments}
    Let $AB$ be an edge in an $\mathbf{m}$-admissible lattice path, and let $A'$, $B'$ denote the reflections of $A$ and $B$, respectively. Assume the $x$-coordinates of $A$ and $B$ are $i-1$ and $i$, respectively. Then the following statements hold:
    \begin{enumerate}
        \item[{\rm (i)}] If the edge $AB$ either touches the piecewise linear red path at $B$ or crosses it (i.e., $A$ lies above and $B$ below the line, or vice versa), then the edge $AB'$ has an admissible slope.
        \item[{\rm (ii)}]  If the slope of $AB$ is $\sigma$, then the slope of $A'B'$ is $3 - m_i - \sigma$, which is admissible.
 \item[{\rm (iii)}]  If both $AB$ and $AB'$ have admissible slopes, then $A'B$ also has an admissible slope.
 \item[{\rm (iv)}] If $AB$ is the unique edge in the path for which $AB'$ has admissible slope, then the slope of $AB$ is not $1$. 
   \end{enumerate}
\end{lemma}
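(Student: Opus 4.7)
The plan is to reparameterize each vertex $P_j = (j, y_j)$ of the path by its signed deviation from the red line, $\alpha_j := y_j - L_{n,\mathbf{m},k}(j)$; since $2L_{n,\mathbf{m},k}(j) \in \mathbb{Z}$, each $\alpha_j$ is a half-integer, and $\alpha_0 = k/2 > 0$. Writing $\alpha := \alpha_{i-1}$, $\beta := \alpha_i$, and using $L_{n,\mathbf{m},k}(i) - L_{n,\mathbf{m},k}(i-1) = (3-m_i)/2$, a short calculation gives
\begin{align*}
\mathrm{slope}(AB) &= \tfrac{3-m_i}{2} + (\beta-\alpha), & \mathrm{slope}(A'B') &= \tfrac{3-m_i}{2} - (\beta-\alpha), \\
\mathrm{slope}(AB') &= \tfrac{3-m_i}{2} - (\alpha+\beta), & \mathrm{slope}(A'B) &= \tfrac{3-m_i}{2} + (\alpha+\beta).
\end{align*}
All four expressions are integers (the $y$-coordinates of path vertices are integers), and the admissible slope set $\{2-m_i,\ldots,1\}$ is exactly the set of integers at distance at most $(m_i-1)/2$ from the midpoint $(3-m_i)/2$. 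Thus admissibility of $\mathrm{slope}(AB)$ or $\mathrm{slope}(A'B')$ is equivalent to $|\beta-\alpha|\leq (m_i-1)/2$, while admissibility of $\mathrm{slope}(AB')$ or $\mathrm{slope}(A'B)$ is equivalent to $|\alpha+\beta|\leq (m_i-1)/2$.

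With this bookkeeping in place, parts (ii) and (iii) will follow immediately: the pair $(AB,A'B')$ and the pair $(AB',A'B)$ each share a single admissibility criterion, so admissibility of one forces admissibility of the other. Part (i) splits into two cases. If $B$ lies on the red line then $\beta=0$, so $B'=B$ and $AB'=AB$ inherits admissibility. If $AB$ strictly crosses the line then $\alpha$ and $\beta$ have strictly opposite signs, so $4\alpha\beta<0$ forces $|\alpha+\beta| < |\alpha-\beta| \leq (m_i-1)/2$ by the admissibility of $AB$, yielding admissibility of $AB'$.

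The main obstacle is (iv), which I plan to prove by contraposition together with a structural argument on the path. Assuming $\mathrm{slope}(AB)=1$ and $AB'$ admissible, the formulas above give $\beta-\alpha=(m_i-1)/2$ and $|\alpha+\beta|\leq (m_i-1)/2$, which together force $\alpha \in [-(m_i-1)/2,\,0]$ and $\beta \in [0,\,(m_i-1)/2]$. In particular $\alpha \leq 0 < \alpha_0 = k/2$, so $i \geq 2$. I will then exhibit a step $j \neq i$ at which $P_{j-1}P_j'$ is admissible, contradicting uniqueness. If $\alpha = 0$, then $P_{i-1}' = P_{i-1}$, so $P_{i-2}P_{i-1}' = P_{i-2}P_{i-1}$ is admissible as an edge of the original path. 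If $\alpha < 0$, the sequence $(\alpha_j)_{j=0}^{i-1}$ transitions from strictly positive to strictly negative; letting $j_0 := \min\{\,j : \alpha_j \leq 0\,\}$, the edge $P_{j_0-1}P_{j_0}$ either strictly crosses the red line ($\alpha_{j_0} < 0$) or touches it at $P_{j_0}$ ($\alpha_{j_0} = 0$), so by part (i) its reflection $P_{j_0-1}P_{j_0}'$ is admissible, and $j_0 \leq i - 1 < i$ delivers the required contradiction.
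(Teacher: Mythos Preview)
Your proof is correct and follows essentially the same approach as the paper's: both exploit that the admissible slope set $\{2-m_i,\ldots,1\}$ is symmetric about the red-line slope $(3-m_i)/2$, so that reflecting endpoints swaps slopes within this interval. Your reparametrization via the deviations $\alpha_j = y_j - L_{n,\mathbf{m},k}(j)$ packages the four slope formulas more uniformly than the paper's direct coordinate computations, and in part (iv) your treatment of the case $\alpha<0$ (i.e.\ $A$ strictly below the red line) is actually more careful than the paper, which asserts ``$\delta=0$'' where the correct conclusion is $\delta\le 0$---a harmless slip there, since the crossing argument still applies, but one you avoid entirely.
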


\begin{proof}
(i)  If the edge $AB$ touches the red line at $B$, then $B' = B$ and hence $AB' = AB$ has admissible slope.
Now consider the case where $AB$ crosses the red line. Without loss of generality, assume $A$ lies above the red line and $B$ below. Let $(i-1, b)$ be the intersection of the red line with the vertical line $x = i-1$. Then 
$A = (i-1, b + \delta)$
for some positive integer $\delta$. Similarly, the red line intersects the vertical line $x = i$ at
$(i, b - \tfrac{m_i - 3}{2})$, and $ B = \left(i, b - \tfrac{m_i - 3}{2} - \epsilon \right)$ for some 
positive integer $\epsilon$.
Consequently,
$B' = \left(i, b - \tfrac{m_i - 3}{2} + \epsilon \right)$.
Since the slope of $AB$ is at least $2 - m_i$, we have
$\epsilon + \frac{m_i - 3}{2} + \delta \leq m_i - 2$,
or equivalently,
$\epsilon \leq \frac{m_i - 1 - 2\delta}{2}$.   
It follows that the slope of $AB'$ is bounded above by
$\epsilon - \frac{m_i - 3}{2} - \delta \leq \frac{m_i - 1 - 2\delta}{2} - \frac{m_i - 3}{2} - \delta = 1 - 2\delta\leq 1$. 
Since it is also bounded below by the admissible slope of $AB$, we conclude that $AB'$ has an admissible slope. This geometric configuration is illustrated on the left in Figure~\ref{fig:crossing_edge}.

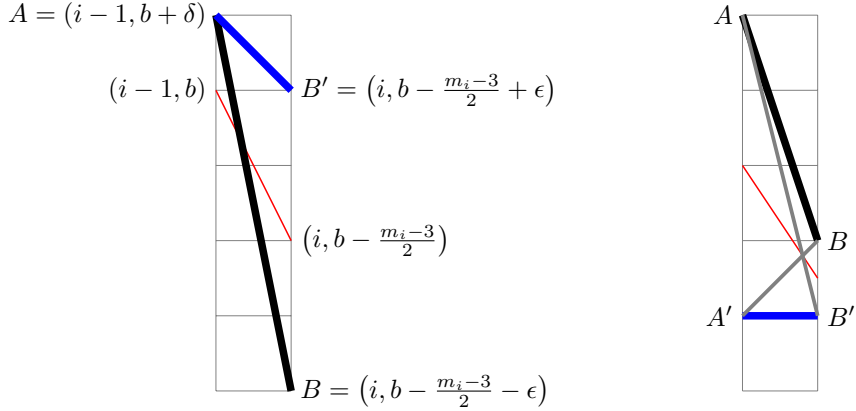
\begin{figure}[!ht]
\centering
\begin{tikzpicture}
    \draw[help lines] (0,-4) grid (1,1);
    \draw[line width=0.02cm, color=red] (0,0) -- (1,-2);
    \draw[line width=0.1cm] (0,1) -- (1,-4);
    \draw[line width=0.1cm, color=blue] (0,1) -- (1,0);
    \draw[left] (0,0) node(red){$(i-1,b)$};
    \draw[left] (0,1) node(A){$A = (i-1, b + \delta)$};
    \draw[right] (1,-4) node(B){$B = \left(i, b - \frac{m_i - 3}{2} - \epsilon \right)$};
    \draw[right] (1,0) node(B'){$B' = \left(i, b - \frac{m_i - 3}{2} + \epsilon \right)$};
    \draw[right] (1,-2) node(red_right){$\left(i, b - \frac{m_i - 3}{2} \right)$};

    \draw[help lines] (7,-4) grid (8,1);
    \draw[line width=0.02cm, color=red] (7,-1) -- (8,-2.5);
    \draw[line width=0.1cm] (7,1) -- (8,-2);
    \draw[line width=0.1cm, color=blue] (7,-3) -- (8,-3);
    \draw[line width=0.05cm, color=gray] (7,1) -- (8,-3);
    \draw[line width=0.05cm, color=gray] (7,-3) -- (8,-2);
    \draw[left] (7,1) node(A){$A$};
    \draw[left] (7,-3) node(A'){$A'$};
    \draw[right] (8,-3) node(B'){$B'$};
    \draw[right] (8,-2) node(B){$B$};
\end{tikzpicture}
\caption{(Left) Schematic view of an edge $AB$ between the vertical lines $x = i-1$ and $x = i$ in an $\mathbf{m}$-admissible lattice path crossing the red line, starting above it and ending below. The slope of $AB$ is at least $2 - m_i$. The positive integers $\delta$ and $\epsilon$ satisfy $\epsilon \leq \frac{m_i - 1 - 2\delta}{2}$. It follows that the slope of $AB'$ exceeds that of $AB$ but is at most $1 - 2\delta$, confirming admissibility of $AB'$.\\
(Right) An edge $AB$ between the vertical lines $x = i-1$ and $x = i$ in an $\mathbf{m}$-admissible lattice path with $m_i = 6$, and its reflection $A'B'$. The red segment has slope $-\frac{3}{2}$. The slopes of $AB$ and $A'B'$ sum to $3 - m_i = -3$, as do those of $AB'$ and $A'B$. Thus, if $AB'$ has an admissible slope, so does $A'B$.}
\label{fig:crossing_edge}
\end{figure}

(ii) Let $\sigma$ denote the admissible slope of $AB$, so that $\sigma \in \{1, 0, -1, \ldots, 2 - m_i\}$. Note that the slope of the red line is   
    $\frac{3 - m_i}{2}$,
which is the midpoint of the allowed slope range. It follows directly that the slope of $A'B'$ is   
    $3 - m_i - \sigma$,
and hence also admissible.

(iii)  Assume $AB'$ has admissible slope. We want to show that $A'B$ also has admissible slope. Note that by (ii), the slopes of $AB$ and $A'B'$ are $\sigma$ and $3 - m_i - \sigma$, respectively, both admissible. The slopes of $AB$ and $AB'$ differ by some integer $\delta$, and correspondingly, the slopes of $A'B'$ and $A'B$ differ by $-\delta$. Thus, the slopes of $AB'$ and $A'B$ sum to $3 - m_i$. Therefore, admissibility of $AB'$ implies admissibility of $A'B$. See the right side of Figure~\ref{fig:crossing_edge} for an illustration.

(iv) Assume, as in (i), that $(i-1, b)$ is the intersection of the red line with $x = i - 1$, and let $A = (i-1, b + \delta)$. If the edge $AB$ has slope $1$, then $B = (i, b + \delta + 1)$. Since the red line intersects $x = i$ at the point $(i, b - \frac{m_i - 3}{2})$, it follows that $B' = (i, b - \delta - m_i + 2)$. Hence, the slope of $AB'$ is $2 - m_i - 2\delta$, which is admissible if and only if $\delta = 0$. In other words, $A$ must lie on the red line. This cannot occur if $i = 1$, and if $i \geq 2$, then the edge ending at $A$ has a reflection with admissible slope by (i). Thus, there must exist another edge $CD$ such that $CD'$ has admissible slope.
\end{proof}

To understand how the symmetry of the Gorenstein algebra $R/P_{n,\mathbf{m}}$ relates the degrees of $\mathbf{m}$-free monomials via path reflections, we examine the analyze of the corresponding lattice paths.

\begin{remark}\label{rem:Mixed_socle_and_related_degrees}
Let $d' = \left(\sum_{i=1}^n m_i\right) - n - d + k$. We claim that any admissible reflection of a path associated with an $\mathbf{m}$-free monomial of degree $d$ yields a path associated with an $\mathbf{m}$-free monomial of degree $d'$. 
To see this, note that the socle degree of the algebra $R/P_{n,\mathbf{m}}$ is $\sum (m_i - 1)$. By construction, the graph of the piecewise linear function $L_{n,\mathbf{m},k}$ intersects the vertical line $x = n$ at the point
    $\textstyle{Q = \left(n, \frac{3n - \left(\sum m_i\right) - k}{2} \right)}$.
An $\mathbf{m}$-admissible lattice path corresponding to an $\mathbf{m}$-free monomial of degree $d$ ends at $Q_d = (n, n - d)$. Since $R/P_{n,\mathbf{m}}$ is Gorenstein and $(d-k)+d'$ is the socle degree of $R/P_{n,\mathbf{m}}$, we have
\begin{equation*}\label{eq:Georenstein_symmetry}
\HF(R/P_{n,\mathbf{m}},d-k) = \HF(R/P_{n,\mathbf{m}},d').
\end{equation*}
The path of degree $d'$ ends at $Q_{d'} = (n, n - d') = \left(n, 2n + d - \sum m_i - k\right)$. The average of the $y$-coordinates of $Q_d$ and $Q_{d'}$ is
    $\textstyle{\frac{n - d + \left(2n + d - \sum m_i - k\right)}{2} = \frac{3n - \sum m_i - k}{2}}$,
which matches the $y$-coordinate of $Q$. Hence, any admissible reflection of a path of degree $d$ yields a path of degree $d'$.
\end{remark}

Remark~\ref{rem:Mixed_socle_and_related_degrees} motivates the following degree-specific refinement of the reflection map $\Lambda$.

\begin{definition}\label{def:Degreewise_reflection_maps}
Let $d$ be the degree of an $\mathbf{m}$-free monomial and set $d' = \left(\sum m_i\right) - n - d + k$. We define the degree-wise reflection map $\Lambda_d$ as the restriction of $\Lambda$ to the set of critical $\mathbf{m}$-admissible paths ending at $(n, n - d)$
\begin{equation*}\label{eq:Reflection_map_for_paths_of_degree_d}
    \Lambda_d : 
    \left\{
        \begin{array}{l}
            \mathbf{P} \text{ critical and } \mathbf{m}\text{-admissible} \\
            \text{with endpoint } (n, n-d)
        \end{array}
    \right\}
    \longrightarrow
    \left\{
        \begin{array}{l}
            \mathbf{P} \text{ critical and } \mathbf{m}\text{-admissible} \\
            \text{with endpoint } (n, n-d')
        \end{array}
    \right\}, \quad
    \mathbf{P} \mapsto \Lambda(\mathbf{P}).
\end{equation*}
\end{definition}

We now show that each map $\Lambda_d$ is a bijection, with inverse $\Lambda_{d'}$. The well-definedness of $\Lambda_d$ will follow from the proof of the next lemma.

\begin{lemma}\label{lem:reflectionbijection}
    Let $d$ be the degree of an $\mathbf{m}$-free monomial and let $d' = \left( \sum m_i \right) - n - d + k$. Then the map $\Lambda_d$ is a bijection, with inverse $\Lambda_{d'}$.
\end{lemma}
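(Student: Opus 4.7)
The strategy is to show that $\Lambda_d$ is well-defined and that $\Lambda_{d'}$ is its two-sided inverse. Since the construction of $\Lambda$ is symmetric in $d$ and $d' = \sum_{i=1}^n m_i - n - d + k$, it suffices to prove $\Lambda_{d'} \circ \Lambda_d = \mathrm{id}$, i.e., $\Lambda(\Lambda(\mathbf{P})) = \mathbf{P}$ for every critical $\mathbf{m}$-admissible path $\mathbf{P}$ of degree $d$. Writing $i := \lambda(\mathbf{P})$ and $\mathbf{Q} := \Lambda(\mathbf{P}) = (P_0, \ldots, P_{i-1}, P_i', \ldots, P_n')$, endpoint correctness of $\mathbf{Q}$ is handled by Remark~\ref{rem:Mixed_socle_and_related_degrees}. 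The main task reduces to proving $\lambda(\mathbf{Q}) = i$: once this holds, the involutivity of the point reflection immediately gives $\Lambda(\mathbf{Q}) = {}^i\mathbf{Q}' = (P_0, \ldots, P_{i-1}, P_i, \ldots, P_n) = \mathbf{P}$.

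The inequality $\lambda(\mathbf{Q}) \geq i$ is automatic since ${}^i\mathbf{Q}' = \mathbf{P}$ is $\mathbf{m}$-admissible. For $\lambda(\mathbf{Q}) \leq i$, I will first observe that, by Lemma~\ref{lem:Basic_reflection_arguments}(ii), the reflected tails of ${}^j\mathbf{P}'$ and ${}^j\mathbf{Q}'$ consist of edges with admissible slopes, so the admissibility of these composite paths hinges solely on the single transition edges $P_{j-1}P_j'$ (for ${}^j\mathbf{P}'$) and $P_{j-1}'P_j$ (for ${}^j\mathbf{Q}'$). Applying Lemma~\ref{lem:Basic_reflection_arguments}(ii) to the edge $P_{j-1}P_j'$ and using the involutivity of reflection shows that the slopes of $P_{j-1}P_j'$ and $P_{j-1}'P_j$ sum to $3 - m_j$; since the admissible slope set $\{1, 0, -1, \ldots, 2 - m_j\}$ is symmetric about $(3 - m_j)/2$, one of these two edges is admissible if and only if the other is. Maximality of $\lambda(\mathbf{P}) = i$ makes $P_{j-1}P_j'$ inadmissible for every $j > i$, so $P_{j-1}'P_j$ is inadmissible as well, yielding $\lambda(\mathbf{Q}) \leq i$.

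The main obstacle in this argument is precisely this transfer of maximality from $\mathbf{P}$ to $\mathbf{Q}$; without it, one only has the weaker $\lambda(\mathbf{Q}) \geq \lambda(\mathbf{P})$ from involutivity, and then $\Lambda(\mathbf{Q})$ could in principle differ from $\mathbf{P}$. The slope-sum identity extracted from Lemma~\ref{lem:Basic_reflection_arguments}(ii), together with the symmetry of the admissible slope range, is the technical heart that closes this gap. Once $\lambda(\mathbf{Q}) = i$ is verified, criticality of $\mathbf{Q}$ follows (so $\Lambda_d$ does land in the target displayed in Definition~\ref{def:Degreewise_reflection_maps}), the identity $\Lambda_{d'}(\Lambda_d(\mathbf{P})) = \mathbf{P}$ holds, and interchanging the roles of $d$ and $d'$ yields $\Lambda_d \circ \Lambda_{d'} = \mathrm{id}$, completing the proof.
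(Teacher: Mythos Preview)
Your proof is correct and follows essentially the same strategy as the paper: both arguments reduce the bijectivity to showing $\lambda(\Lambda(\mathbf{P})) = \lambda(\mathbf{P})$, and both accomplish this via the slope symmetry around $(3-m_j)/2$. The paper packages the key step as an appeal to Lemma~\ref{lem:Basic_reflection_arguments}(iii) (admissibility of $C'D$ would force admissibility of $CD'$, contradicting maximality), while you unpack the same content as the slope-sum identity $\mathrm{slope}(P_{j-1}P_j') + \mathrm{slope}(P_{j-1}'P_j) = 3 - m_j$.

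One small imprecision: you invoke Lemma~\ref{lem:Basic_reflection_arguments}(ii) on the edge $P_{j-1}P_j'$, but that lemma is stated for edges of an $\mathbf{m}$-admissible path, and for $j>i$ this edge need not have admissible slope. The underlying computation (that reflecting both endpoints of any segment between $x=j-1$ and $x=j$ replaces slope $\sigma$ by $3-m_j-\sigma$) is of course valid regardless of admissibility, so your slope-sum identity holds; but the cleaner citation is Lemma~\ref{lem:Basic_reflection_arguments}(iii), whose proof records exactly this symmetry and which the paper itself uses at this step.
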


\begin{proof}
    Since $(d')' = d$, we may assume without loss of generality that $d \leq d'$. Consider any admissible path $\mathbf{P}$ associated with an $\mathbf{m}$-free monomial of degree $d'$. Since this path starts above the piecewise linear red path and ends on or below it, there exists an edge $AB$ in the path which either touches the red path at $B$ or crosses it from above to below. By Lemma~\ref{lem:Basic_reflection_arguments}, the reflected edge $AB'$ is also admissible and reflecting $B$ and each subsequent vertex of the path results in an admissible path. Thus $\mathbf{P}$ is critical, that is, in the domain of the map $\Lambda_{d'}$. The reflected path $\Lambda_{d'}(\mathbf{P}) = \Lambda(\mathbf{P})$ ends at $(n, n - d)$ by Remark~\ref{rem:Mixed_socle_and_related_degrees}.

    We now show that $\Lambda_d$ is well-defined and that $\Lambda_d^{-1} = \Lambda_{d'}$. Again, consider the path $\mathbf{P}$ described above and its reflection $\Lambda_{d'}(\mathbf{P})$ ending at $(n, n - d)$. Suppose the reflection started at the vertex $B$ of the edge $AB$ of $\mathbf{P}$. Then for the path $\Lambda_{d'}(\mathbf{P})$, a reflection can be started at the vertex $B'$ of its edge $AB'$ and if this is the last possible start of a reflection, we have $\Lambda_d(\Lambda_{d'}(\mathbf{P})) = \mathbf{P}$. It remains to prove that no later start is possible. So let $C'D'$ be an edge of $\Lambda_{d'}(\mathbf{P})$ occurring after $AB'$, corresponding to the edge $CD$ in $\mathbf{P}$. If $C'D$ were admissible, that is, if a reflection could be started at $D'$ in $\Lambda_{d'}(\mathbf{P})$, then by Lemma~\ref{lem:Basic_reflection_arguments}, the edge $CD'$ would also be admissible, meaning a reflection for $\mathbf{P}$ could be started at $D$, which comes after $B$, contradicting the definition of the map $\Lambda_{d'}$.

    An analogous argument shows that $\Lambda_{d'}(\Lambda_d(\mathbf{P})) = \mathbf{P}$ for any critical path $\mathbf{P}$ ending at $(n, n - d)$. Hence $\Lambda_{d'} = \Lambda_d^{-1}$ as claimed.
\end{proof}

As a consequence of Lemma~\ref{lem:reflectionbijection}, we obtain an enumeration of critical paths ending at a given point. This result will be used to relate the initial ideal $\ini(I_{n,\mathbf{m},k})$ and the ideal $(M_{n,\mathbf{m},k})$ via an equality of Hilbert series. We will first compare the Hilbert series of $(M_{n,\mathbf{m},k})$ and $P_{n,\mathbf{m}}$.

\begin{proposition}\label{prop:Counting_Critical_Paths}
    The Hilbert series of the quotient rings $R/(M_{n,\mathbf{m},k})$ and $R/P_{n,\mathbf{m}}$ satisfy
    \begin{equation*}
        \HS\big(R/(M_{n,\mathbf{m},k}); t\big) = \big[(1 - t^k) \, \HS(R/P_{n,\mathbf{m}}; t)\big],
    \end{equation*}
    where the brackets denote truncation at the first non-positive coefficient.
\end{proposition}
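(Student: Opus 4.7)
The plan is to compute $\HF(R/(M_{n,\mathbf{m},k}), d)$ directly via the reflection bijection, then compare the result with the coefficients of $(1-t^k)\HS(R/P_{n,\mathbf{m}};t)$. By Lemma~\ref{lem:path_set_up_mixed_m} and Definition~\ref{def:Ideal_of_critical_paths},
\begin{equation*}
\HF(R/(M_{n,\mathbf{m},k}), d) = \HF(R/P_{n,\mathbf{m}}, d) - |C_d|,
\end{equation*}
where $C_d$ denotes the set of critical $\mathbf{m}$-admissible lattice paths ending at $(n, n-d)$, so the task reduces to determining $|C_d|$ in closed form.

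Setting $D = D_{n,\mathbf{m}}$ and $d' = D + k - d$, Lemma~\ref{lem:reflectionbijection} gives $|C_d| = |C_{d'}|$, and its proof shows that when $d' \geq (D+k)/2$ every $\mathbf{m}$-admissible path of degree $d'$ is critical (its endpoint lies at or below $L_{n,\mathbf{m},k}(n)$, so some edge crosses or touches $L_{n,\mathbf{m},k}$ from above, and Lemma~\ref{lem:Basic_reflection_arguments}(i)--(ii) produces an admissible reflection). Therefore, when $d \leq (D+k)/2$, $|C_d| = \HF(R/P_{n,\mathbf{m}}, d') = \HF(R/P_{n,\mathbf{m}}, d-k)$ by the symmetry of Remark~\ref{rem:Properties_Artinian_monomial_CIs}(3); when $d > (D+k)/2$, applying the same identity with the roles of $d$ and $d'$ swapped gives $|C_d| = |C_{d'}| = \HF(R/P_{n,\mathbf{m}}, d'-k) = \HF(R/P_{n,\mathbf{m}}, d)$. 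Altogether,
\begin{equation*}
\HF(R/(M_{n,\mathbf{m},k}), d) = \begin{cases} \HF(R/P_{n,\mathbf{m}}, d) - \HF(R/P_{n,\mathbf{m}}, d-k) & \text{if } d \leq (D+k)/2, \\ 0 & \text{otherwise.} \end{cases}
\end{equation*}

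To match this with the claimed truncation, I would analyze the coefficient $g(d) := \HF(R/P_{n,\mathbf{m}}, d) - \HF(R/P_{n,\mathbf{m}}, d-k)$ of $(1-t^k)\HS(R/P_{n,\mathbf{m}};t)$. Using symmetry to rewrite $g(d) = \HF(R/P_{n,\mathbf{m}}, d) - \HF(R/P_{n,\mathbf{m}}, D-d+k)$, a brief case analysis on whether $d \leq D/2$ or $d > D/2$ shows that for $d \leq (D+k)/2$ the index $d$ is at least as close to $D/2$ as $D-d+k$ is; unimodality (Remark~\ref{rem:Properties_Artinian_monomial_CIs}(4)) thus forces $g(d) \geq 0$, and the antisymmetry $g(D+k-d) = -g(d)$ then gives $g(d) \leq 0$ for $d \geq (D+k)/2$. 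Letting $d_\ast$ be the smallest index with $g(d_\ast) \leq 0$, this sign pattern yields $d_\ast \leq \lceil (D+k)/2 \rceil$. For $d < d_\ast$ both sides equal $g(d)$; for $d \geq d_\ast$ the truncation contributes zero, and the formula above also produces zero, either directly when $d > (D+k)/2$ or because the sign constraints force $g(d) = 0$ in the transitional range $d_\ast \leq d \leq (D+k)/2$. The main obstacle I anticipate is this sign analysis of $g$: unimodality alone does not immediately yield the required inequality, and one must split carefully according to whether $d$ lies above or below the peak $D/2$ and compare both distances to the peak.
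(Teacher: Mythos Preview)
Your approach is essentially the same as the paper's: both compute $\HF(R/(M_{n,\mathbf{m},k}),d)$ via the reflection bijection of Lemma~\ref{lem:reflectionbijection}, obtaining $g(d)=\HF(R/P_{n,\mathbf{m}},d)-\HF(R/P_{n,\mathbf{m}},d-k)$ for $d\le (D+k)/2$ and $0$ otherwise, and then match this with the truncated series. Your treatment of the truncation is in fact more careful than the paper's, which asserts (somewhat imprecisely) that the untruncated coefficient is literally zero for $d\ge d'$.

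There is, however, one genuine gap in your final step. You claim that ``the sign constraints force $g(d)=0$ in the transitional range $d_\ast\le d\le (D+k)/2$''. Your sign analysis gives $g(d)\ge 0$ on this range, but it does \emph{not} give $g(d)\le 0$: the definition of $d_\ast$ only yields $g(d_\ast)\le 0$, and antisymmetry only controls $d\ge (D+k)/2$. For a general symmetric unimodal sequence one can have $g(d_\ast)=0$ followed by $g(d_\ast+1)>0$ with $d_\ast+1\le (D+k)/2$ (e.g.\ the sequence $1,1,1,2,1,1,1$ with $k=2$), so unimodality and symmetry alone are insufficient. The clean fix is to use instead that $(M_{n,\mathbf{m},k})\supseteq P_{n,\mathbf{m}}$, so $R/(M_{n,\mathbf{m},k})$ is a standard graded Artinian algebra; hence once $\HF(R/(M_{n,\mathbf{m},k}),d_\ast)=g(d_\ast)=0$, all higher graded pieces vanish, forcing $g(d)=0$ for every $d$ with $d_\ast\le d\le (D+k)/2$. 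With this observation in place your argument is complete.
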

\begin{proof}
Let $d$ be the degree of any $\mathbf{m}$-free monomial in $(M_{n,\mathbf{m},k})$ and set $d' = (\sum m_i) - n - d+k$. 
First, observe from the proof of Lemma~\ref{lem:reflectionbijection} that if $d \geq d'$, or equivalently,
$d - \big((\sum m_i) - n - d\big) \geq k$,
then all $\mathbf{m}$-admissible lattice paths corresponding to $\mathbf{m}$-free monomials of degree $d$ are critical. Hence the coefficient of $t^d$ in $\HS(R/(M_{n,\mathbf{m},k}); t)$ is zero.
    On the other hand, when $d \geq d'$, the coefficient of $t^d$ in the series $(1 - t^k)\HS(R/P_{n,\mathbf{m}}; t)$ is also zero.
This follows from the fact that the Hilbert series of $R/P_{n,\mathbf{m}}$ is symmetric, with equal coefficients for $t^d$ and $t^{(\sum m_i) - n - d}$, where $(\sum m_i) - n$ is the socle degree.

    Now, consider the case $d \leq d'$. In this case, all admissible paths ending at $(n, n - d')$ are critical. By Lemma~\ref{lem:reflectionbijection}, these paths are in bijection with the critical paths ending at $(n, n - d)$. Furthermore, by Lemma~\ref{lem:path_set_up_mixed_m} and the symmetry of the Hilbert series of $P_{n,\mathbf{m}}$, they correspond bijectively to the set of all admissible paths ending at
    $\bigl(n, n - \big((\sum m_i) - n - d'\big)\bigr) = (n, n - (d - k))$.
    Therefore, the coefficient of $t^d$ in $\HS(R/(M_{n,\mathbf{m},k}); t)$ equals the difference between the coefficients of $t^d$ and $t^{d-k}$ in the Hilbert series $\HS(R/P_{n,\mathbf{m}}; t)$.
\end{proof}

\subsection{Minimal generators}

In the remainder of this section, we study some further properties of the monomial ideals $(M_{n,\mathbf{m},k})$ and their minimal generating sets.

\begin{lemma}
\label{lem:critical_path_ideal_inclusion}
If $n_1<n_2$ and $\mathbf{m}_1\in \mathbb{Z}_{\geq 2}^{n_1}$ is a prefix of $\mathbf{m}_2\in \mathbb{Z}_{\geq 2}^{n_2}$, then $(M_{n_1,\mathbf{m},k})\subseteq (M_{n_2,\mathbf{m},k})$.
\end{lemma}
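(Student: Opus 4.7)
The plan is to reduce the containment to a check on generators. The ideal $(M_{n_1,\mathbf{m}_1,k})$ is generated by the pure powers $x_i^{m_i}$ for $i\leq n_1$ together with the monomials of $M_{n_1,\mathbf{m}_1,k}$, and since $\mathbf{m}_1$ is a prefix of $\mathbf{m}_2$, those pure powers are themselves among the generators of $(M_{n_2,\mathbf{m}_2,k})$. So it remains to show that every $t\in M_{n_1,\mathbf{m}_1,k}$ lies in $M_{n_2,\mathbf{m}_2,k}$.

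The central geometric observation I would invoke is that the red curve $L_{n,\mathbf{m},k}$, from its inductive definition, depends only on $k$ and on the coordinates of $\mathbf{m}$ encountered so far: concretely $L_{n_1,\mathbf{m}_1,k}$ and $L_{n_2,\mathbf{m}_2,k}$ coincide on $[0,n_1]$. Given $t\in M_{n_1,\mathbf{m}_1,k}$ with associated $\mathbf{m}_1$-admissible path $\mathbf{P}^{(1)}=(P_0,\ldots,P_{n_1})$, viewing $t$ in the larger ring (with zero exponents in $x_{n_1+1},\ldots,x_{n_2}$) yields a monomial whose $\mathbf{m}_2$-admissible path $\mathbf{P}^{(2)}$ is simply $\mathbf{P}^{(1)}$ extended by $n_2-n_1$ consecutive $1$-up steps, since each additional variable contributes $x_j^0$.

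Next I would transfer criticality. Let $\lambda$ be an index witnessing that $\mathbf{P}^{(1)}$ is critical, so that ${^\lambda(\mathbf{P}^{(1)})'}$ is $\mathbf{m}_1$-admissible (note $\lambda\geq 1$ since for $k\geq 1$ the red curve passes strictly below the origin, so $i=0$ never works). I claim the same $\lambda$ witnesses criticality of $\mathbf{P}^{(2)}$. By the agreement of the two red curves on $[0,n_1]$, the reflected vertices $P_\lambda',\ldots,P_{n_1}'$ and in particular the kink edge $P_{\lambda-1}P_\lambda'$ are identical to those for $\mathbf{P}^{(1)}$, so their admissibility is inherited directly from the hypothesis. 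The newly appended edges in $\mathbf{P}^{(2)}$ all have slope $1$, so by Lemma~\ref{lem:Basic_reflection_arguments}(ii) their reflections have slope $3-m_j-1=2-m_j$, which is admissible since $m_j\geq 2$. Hence ${^\lambda(\mathbf{P}^{(2)})'}$ is $\mathbf{m}_2$-admissible, so $\mathbf{P}^{(2)}$ is critical, and $t\in M_{n_2,\mathbf{m}_2,k}$.

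The only subtle step is the transfer of the kink edge, and it is here that the \emph{prefix} hypothesis on $\mathbf{m}_1$ (as opposed to an arbitrary subvector) is essential: it guarantees that the red line agrees on the entire domain of the original path, so the reflection of $P_\lambda$ is computed identically in both settings and admissibility of $P_{\lambda-1}P_\lambda'$ automatically carries over. Every other ingredient is a routine consequence of the slope behaviour under reflection recorded in Lemma~\ref{lem:Basic_reflection_arguments}.
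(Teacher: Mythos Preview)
Your proof is correct and follows essentially the same approach as the paper: extend the critical $\mathbf{m}_1$-path by $n_2-n_1$ upward steps and invoke Lemma~\ref{lem:Basic_reflection_arguments}(ii) to see that the reflected tail remains admissible. The paper compresses this into a single sentence, whereas you spell out the agreement of the red curves on $[0,n_1]$, the handling of the kink edge, and the pure-power generators; these are all implicit in the paper's one-line argument.
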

\begin{proof}
It suffices to show that prolonging an $\mathbf{m}_1$-admissible critical path by $n_2 - n_1$ upward steps yields an $\mathbf{m}_2$-admissible critical path. This follows directly from Lemma~\ref{lem:Basic_reflection_arguments}(ii).
\end{proof}

\begin{lemma}\label{lem:MinGens_no_path_vertex_below}
    Let $\mathbf{P}$ be a lattice path associated with an $\mathbf{m}$-free minimal generator of $(M_{n,\mathbf{m},k})$. Then, the following statements hold.
    \begin{enumerate}
        \item[{\rm (i)}] No vertex of $\mathbf{P}$ is below the red path. 
        \item[{\rm (ii)}] Let $AB$ be the last edge of $\mathbf{P}$ and let $B'$ be the reflection of $B$.  If the slope of $AB$ is at most zero, then the edge $AB'$ has an admissible slope.
    \end{enumerate}
\end{lemma}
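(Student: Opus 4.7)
The plan is to prove (i) and (ii) by contradiction: in each case I will produce a proper divisor $t/x_l\in M_{n,\mathbf{m},k}$ of the monomial $t$ associated with $\mathbf{P}$, which places $t/x_l$ in $(M_{n,\mathbf{m},k})$ and contradicts the minimality of $t$ as a generator.

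For (i), I will take $j$ to be the smallest index with $P_j<L_{n,\mathbf{m},k}(j)$. Then $P_{j-1}\geq L_{n,\mathbf{m},k}(j-1)$, and a quick slope count gives $\sigma:=P_j-P_{j-1}\leq 0$, so $\alpha_j\geq 1$ and $t/x_j$ is a legitimate $\mathbf{m}$-free proper divisor. Its path $\mathbf{P}^{(1)}$ agrees with $\mathbf{P}$ through $P_{j-1}$ and has every later vertex shifted up by one. I will show $\mathbf{P}^{(1)}$ is critical by reflecting it at index $j$. The reflected edges from position $j+1$ onward are parallel translates of the corresponding reflected edges of $\mathbf{P}$, so their admissibility follows from Lemma~\ref{lem:Basic_reflection_arguments}(ii). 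Only the new first reflected edge $P_{j-1}(P_j+1)'$ requires attention; its slope equals the slope of $P_{j-1}P_j'$ minus $1$. To control this, I parameterize $P_{j-1}=L(j-1)+\mu$ with $\mu\geq 0$ and $P_j=L(j)-\nu$ with $\nu\geq 1/2$ (half-integer values, since $L$ and $P_j$ are at worst half-integers and $P_j<L(j)$ strictly). The admissibility of $\sigma$ becomes $\mu+\nu\leq(m_j-1)/2$, which combined with $\nu\geq 1/2$ yields $\nu-\mu\geq(3-m_j)/2$. Hence the slope of $P_{j-1}P_j'$ equals $(3-m_j)/2+\nu-\mu\geq 3-m_j$, and the new slope lies in $[2-m_j,0]$, which is admissible.

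For (ii), I will take $l=n$. Assuming the last edge $AB=P_{n-1}P_n$ has slope $\sigma\leq 0$ but $AB'$ is not admissible, the inequality gives $\alpha_n\geq 1$, so $t/x_n$ is a proper divisor whose path $\mathbf{P}^{(1)}$ differs from $\mathbf{P}$ only at its last vertex, shifted up by one. Let $\lambda:=\lambda(\mathbf{P})$. The hypothesis forces $\lambda\leq n-1$, because otherwise $\mathbf{P}$'s reflection at $n$ would be admissible, and its only new edge is precisely $AB'$. Now reflect $\mathbf{P}^{(1)}$ at $\lambda$: all reflected edges through $P_{n-1}'$ coincide with those of $\mathbf{P}$'s reflection at $\lambda$ and are admissible. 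Only the final reflected edge is new, with slope equal to (slope of $P_{n-1}'P_n'$)$-1=2-m_n-\sigma$ by Lemma~\ref{lem:Basic_reflection_arguments}(ii), which lies in $[2-m_n,0]$ since $\sigma\in[2-m_n,0]$. Hence $\mathbf{P}^{(1)}$ is critical, as required.

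The main obstacle is verifying the admissibility of the one new reflected edge after shifting $P_j$ (respectively $P_n$) up by one: subtracting $1$ from an admissible slope could in principle yield the inadmissible value $1-m_j$ (respectively $1-m_n$). In (ii) the hypothesis $\sigma\leq 0$ prevents this almost tautologically, but in (i) it requires the half-integer bookkeeping above, crucially exploiting the strict inequality $P_j<L(j)$ to guarantee $\nu\geq 1/2$.
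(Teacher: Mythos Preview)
Your argument is correct, but it takes a different route from the paper's. Both proofs proceed by contradiction, exhibiting a proper divisor of the monomial $t$ that still lies in $(M_{n,\mathbf{m},k})$. The paper obtains this divisor by \emph{truncating} the path: for (i) it takes the initial segment of $\mathbf{P}$ up to (or one step past) the first vertex $C$ on or below the red line, observes that this shorter path is critical in fewer variables, and invokes Lemma~\ref{lem:critical_path_ideal_inclusion} to embed $(M_{n_1,\mathbf{m},k})$ into $(M_{n,\mathbf{m},k})$; for (ii) it similarly truncates at the earlier edge $EF$ with $EF'$ admissible. You instead keep the full-length path and \emph{lower one exponent}: dividing by $x_j$ (resp.\ $x_n$) shifts all later vertices up by one, and you verify directly that the resulting path is critical by reflecting at the same index. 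Your approach is self-contained (it does not rely on Lemma~\ref{lem:critical_path_ideal_inclusion}), at the cost of the explicit slope computation for the single modified reflected edge; the half-integer bookkeeping using $\nu\geq 1/2$ is exactly the right way to handle this, and your bound $\nu-\mu\geq(3-m_j)/2$ cleanly gives the admissible range $[2-m_j,0]$ for the new edge. The paper's truncation argument is shorter once Lemma~\ref{lem:critical_path_ideal_inclusion} is available, and it also foreshadows the structure of the sets $\mathrm{Crit}_{n,\mathbf{m},k,j}$ appearing in the minimal generating set.
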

\begin{proof}
(i)    Consider an $\mathbf{m}$-free monomial $s$ in $(M_{n,\mathbf{m},k})$. It corresponds to a critical path $\mathbf{P}$ with $n$ steps, which contains an edge $AB$ such that $AB'$ is admissible, where $B'$ is the reflection of $B$. Suppose there exists a vertex of $\mathbf{P}$ below the red path and let $D$ be the first such vertex. Since $\mathbf{P}$ starts above the red path, there is an edge $CD$ of $\mathbf{P}$ ending at $D$.
    
    If $C$ lies on the red path, then the initial segment $\mathbf{P}_1$ of $\mathbf{P}$ up to $C$ corresponds to an $\mathbf{m}$-free monomial in $(M_{n_1,\mathbf{m},k})$, where $n_1 < n$ is the $x$-coordinate of $C$. Note that the monomial associated with $\mathbf{P}_1$ strictly divides $s$, since the slope of the edge $CD$ is not positive. By Lemma~\ref{lem:critical_path_ideal_inclusion}, $(M_{n_1,\mathbf{m},k}) \subseteq (M_{n,\mathbf{m},k})$, so $s$ cannot be a minimal generator.

    If $C$ lies above the red path, then the edge $CD$ crosses the red path, and by Lemma~\ref{lem:Basic_reflection_arguments}, the reflected edge $CD'$ is admissible. Thus, the path $\mathbf{P}_2$ that coincides with $\mathbf{P}$ up to $C$ and ends with $CD'$ corresponds to a monomial $s' \in (M_{n_2,\mathbf{m},k})$, where $n_2 \leq n$ is the $x$-coordinate of $D$ and $D'$. Since the slope of $CD'$ is greater than that of $CD$, $s'$ strictly divides $s$. By Lemma~\ref{lem:critical_path_ideal_inclusion}, $s$ cannot be minimal.

(ii)  Consider the last edge $AB$ of the path $\mathbf{P}$ associated with a minimal generator $s$ of $(M_{n,\mathbf{m},k})$, and suppose $AB$ does not go up. If $AB'$ were not admissible, then there must exist an earlier edge $EF$ in $\mathbf{P}$ with $EF'$ admissible. Let $s''$ be the monomial corresponding to the initial segment $\mathbf{P}_3$ of $\mathbf{P}$ up to $F$. Then $s'' \in (M_{n_3,\mathbf{m},k})$, where $n_3 < n$ is the $x$-coordinate of $F$. Since $AB$ has slope at most zero, $s''$ strictly divides $s$, contradicting minimality. Hence, $AB'$ must have an admissible slope.
\end{proof}

\begin{lemma}\label{lem:exponent_of_last_variable}
Let $s=x_1^{s_1}\cdots x_n^{s_n}\in (M_{n,\mathbf{m},k})$ be an $\mathbf{m}$-free monomial minimal generator with $s_n>0$ 
and write $d=\deg(s)$. Then 
\begin{equation}\label{eq:xn_degree_min_generator}
   s_n = 2d-k-\left(\sum_{i=1}^{n-1} m_i\right)+(n-1).
\end{equation}
Moreover, any $\mathbf{m}$-free monomial of degree $d$ is critical if it satisfies Equation~\eqref{eq:xn_degree_min_generator}.
\end{lemma}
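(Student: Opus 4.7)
The plan is to read off $s_n$ from the geometry of the last edge of the lattice path $\mathbf{P}$ associated with $s$. Let $A = P_{n-1}$ and $B = P_n = (n, n-d)$. Under the monomial-to-path correspondence, the slope of $AB$ equals $1 - s_n$, equivalently $y_A = s_n + n - d - 1$. The reflection of $B$ satisfies $y_{B'} = 2 L_{n,\mathbf{m},k}(n) - (n-d) = 2n + d - \sum_{i=1}^n m_i - k$, so the slope of $AB'$ is an affine function of $s_n$ alone. A direct algebraic check shows that the equality $\mathrm{slope}(AB') = 2 - m_n$ is equivalent to Equation~\eqref{eq:xn_degree_min_generator}. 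The task therefore reduces to showing $\mathrm{slope}(AB') = 2 - m_n$.

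Since $s_n \geq 1$, the slope of $AB$ is at most zero, so Lemma~\ref{lem:MinGens_no_path_vertex_below}(ii) yields that $\mathrm{slope}(AB')$ lies in $\{2 - m_n, \ldots, 1\}$. The main step is to use minimality of $s$ to rule out all values except the extreme $2 - m_n$. To this end, I consider $s^* = s/x_n$, which is $\mathbf{m}$-free of degree $d - 1$. Its lattice path $\mathbf{P}^*$ agrees with $\mathbf{P}$ through $A$ and ends at $B^* = (n, n-d+1)$. Minimality of $s$ forces $s^* \notin (M_{n,\mathbf{m},k})$; since $s^*$ is $\mathbf{m}$-free, this means $\mathbf{P}^*$ is not critical. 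Specializing the definition of criticality to the reflection index $i = n$ yields that the edge $A(B^*)'$ is not admissible. A short computation shows that $(B^*)'$ lies one unit below $B'$, hence $\mathrm{slope}(A(B^*)') = \mathrm{slope}(AB') - 1$. Combining non-admissibility of $A(B^*)'$ with admissibility of $AB'$ (in particular $\mathrm{slope}(AB') \leq 1$) pins $\mathrm{slope}(AB')$ down to $2 - m_n$, and the formula for $s_n$ follows.

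For the moreover part, the same computation run in reverse shows that any $\mathbf{m}$-free monomial of degree $d$ satisfying Equation~\eqref{eq:xn_degree_min_generator} has $\mathrm{slope}(AB') = 2 - m_n$, which is admissible, so its path is critical via the reflection index $i = n$. The main obstacle I anticipate is the conceptual observation that the relevant minimality constraint is precisely the one coming from $s/x_n$ rather than from $s/x_j$ for other $j$, and that non-admissibility of $A(B^*)'$ at the endpoint shifted one unit up is exactly what forces $\mathrm{slope}(AB')$ to sit at the extreme of its admissible range; beyond this insight, all steps are direct geometric or algebraic verifications.
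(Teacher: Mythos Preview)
Your proposal is correct and follows essentially the same approach as the paper: both arguments use Lemma~\ref{lem:MinGens_no_path_vertex_below}(ii) to get admissibility of $AB'$, then apply minimality via $s/x_n$ to force the reflected last edge of the smaller path to be non-admissible, pinning the slope of $AB'$ to $2-m_n$, and finally convert this to the formula for $s_n$. The only difference is cosmetic: the paper introduces auxiliary integers $\sigma=s_n-1$ and $\tau$ and phrases the conclusion as $\sigma+\tau=m_n-2$ before computing $s_n$ via the point $B'=(n,n-d')$, whereas you compute the slope of $AB'$ directly from $L_{n,\mathbf{m},k}(n)$; the underlying logic is identical.
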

\begin{proof}
Let $s \in M_{n,\mathbf{m},k}$ be a minimal generator of degree $d$ divisible by $x_n$, and let $AB$ be the last edge of its associated lattice path. By Lemma~\ref{lem:MinGens_no_path_vertex_below}(ii), the reflected edge $AB'$, with $B'$ the reflection of $B$, has admissible slope. Write $A = (n-1, n - d + \sigma)$ with $\sigma = s_n - 1$, and note that $B = (n, n - d)$ lies on or above the red line by Lemma~\ref{lem:MinGens_no_path_vertex_below}(i). Let $B' = (n, n - d - \tau)$ for some $\tau \geq 0$. The admissibility of $AB'$ implies $\sigma + \tau \leq m_n - 2$.

Since $s$ is minimal, the monomial $s/x_n$ does not lie in $M_{n,\mathbf{m},k}$. Its corresponding path ends at $\widetilde{B} = B + (0,1)$, whose reflection is  
$\widetilde{B}' = B' - (0,1)$. The edge $A\widetilde{B}'$ must be non-admissible and have slope $-(\sigma + \tau + 1)$, which implies $\sigma + \tau = m_n - 2$. Thus, the slope of $AB'$ is $-(m_n - 2)$.

To compute $s_n$, 
note from Remark~\ref{rem:Mixed_socle_and_related_degrees} that $B' = (n, n - d')$ where $d' = (\sum m_i) - n + k - d$. Since $AB'$ goes down by $m_n - 2$ steps, we have 
$A=(n-1,n-((\sum m_i)-n+k-d) + m_n-2)$. 
Hence, the vertical drop of $AB$ is
\[
n - \left(\sum m_i - n + k - d\right) + m_n - 2 - (n - d) = 2d - k - \sum_{i=1}^{n-1} m_i + (n - 1) - 1.
\]
Adding $1$ gives $s_n = 2d - k - \sum_{i=1}^{n-1} m_i + (n - 1)$, as desired.

For the final claim, consider an $\mathbf{m}$-free monomial $s$ of degree $d$ satisfying Equation \eqref{eq:xn_degree_min_generator}. The final edge $AB$ of the corresponding path then goes from $A=(n-1,(n-1)-(d-s_n))$ to $B=(n,n-d)$. Using Equation \eqref{eq:xn_degree_min_generator}, we have that $d-s_n = k + \sum_{i=1}^{n-1}m_i -(n-1)-d$. Since $B'=(n,n-d')$, the definition for $d'$ gives that the slope of $AB'$ is
\[
(n-d')-((n-1)-(d-s_n)) = 1 - d' - (k + \sum_{i=1}^{n-1}m_i -(n-1)-d) =2- m_n,
\]
which is an admissible slope. Hence $s$ corresponds to a critical path.
\end{proof}

\begin{definition}\label{def:Sets_Crit_n_m_k_j}
Let $s = x_1^{s_1} \cdots x_n^{s_n} \in R$ be a monomial, and fix an index $1 \leq j \leq n$. Define the {\em $j$-truncation} of $s$ as
$\truncs{j}= 
  x_1^{s_1} \cdots x_j^{s_j}$.
For each $1 \leq j \leq n$, define the set
\begin{equation*}\label{eq:Sets_Crit_bar_n_m_k_j}
    \overline{\mathrm{Crit}}_{n,\mathbf{m},k,j} = 
    \left\{ s \;\middle|\; s \text{ is } \mathbf{m}\text{-free},\; x_j \text{ divides } s,\;  \text{and} \ 
    s_j = 2\deg(s_{\leq j}) - k - \sum_{i=1}^{j-1} m_i + (j - 1) \right\}.
\end{equation*}
Finally, define
\begin{equation}\label{eq:Sets_Crit_n_m_k_j}
    \mathrm{Crit}_{n,\mathbf{m},k,j} = 
    \left(
        \overline{\mathrm{Crit}}_{n,\mathbf{m},k,j} 
        \setminus \left(\bigcup_{\ell=1}^{j-1} \overline{\mathrm{Crit}}_{n,\mathbf{m},k,\ell}\right)
    \right) \cap \mathbf{k}[x_1, \ldots, x_j]
\end{equation}
and
\[
\mathrm{Crit}_{n,\mathbf{m},k} = \bigcup_{j=1}^{n} \mathrm{Crit}_{n,\mathbf{m},k,j}.
\]
\end{definition}

In other words, by Lemma~\ref{lem:Basic_reflection_arguments}~(iv) and Lemma~\ref{lem:exponent_of_last_variable}, the set $\mathrm{Crit}_{n,\mathbf{m},k,j}$ consists of all monomials in $\mathbf{k}[x_1, \dots, x_j]$ that correspond to critical paths whose proper initial segments are not critical.
We are now in a position to give an alternative description of the ideal $(M_{n,\mathbf{m},k})$ as defined in Definition~\ref{def:Ideal_of_critical_paths}.

\begin{theorem}\label{thm:Gen_sys_for_critical_path_ideal}
    The ideal $(M_{n,\mathbf{m},k})$ is minimally generated by the set
    \begin{equation*}\label{eq:S_n_m_k}
        S_{n,\mathbf{m},k} =
        \{x_1^{m_1}, x_2^{m_2},\ldots,x_n^{m_n}\} \cup
        \mathrm{Crit}_{n,\mathbf{m},k}.
    \end{equation*}
where $x_j^{m_j}$ is removed if $k+\sum_{i=1}^{j-1}(m_i-1)<m_j$.
\end{theorem}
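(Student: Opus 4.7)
The plan is to verify the theorem by three checks: (i) every element of $S_{n,\mathbf{m},k}$ lies in $(M_{n,\mathbf{m},k})$; (ii) every minimal monomial generator of $(M_{n,\mathbf{m},k})$ belongs to $S_{n,\mathbf{m},k}$; and (iii) the set $S_{n,\mathbf{m},k}$ is an antichain under divisibility. Since the minimal monomial generating set of a monomial ideal is unique, these together will yield the theorem. For (i), the pure powers $x_j^{m_j}$ lie in $P_{n,\mathbf{m}}\subseteq (M_{n,\mathbf{m},k})$ by definition. For $s\in \mathrm{Crit}_{n,\mathbf{m},k,j}$, the defining equation of $\overline{\mathrm{Crit}}_{n,\mathbf{m},k,j}$ is exactly equation~\eqref{eq:xn_degree_min_generator} applied in the sub-configuration $(j,\mathbf{m}_j,k)$, so the sufficiency direction of Lemma~\ref{lem:exponent_of_last_variable} makes the $j$-step path of $s$ critical. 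Hence $s\in M_{j,\mathbf{m}_j,k}$, and Lemma~\ref{lem:critical_path_ideal_inclusion} promotes this to $s\in (M_{n,\mathbf{m},k})$.

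For (ii), I take a minimal generator $s$. If $s$ is not $\mathbf{m}$-free, some $x_j^{m_j}\in (M_{n,\mathbf{m},k})$ divides $s$, so minimality forces $s=x_j^{m_j}$. If instead $s$ is $\mathbf{m}$-free, set $j=\max(s)$. The key preparatory step is to show that the $j$-step truncation of the path of $s$ is itself critical: choosing the smallest index $i$ at which an admissible reflection occurs, an $i>j$ would make $P_{i-1}P_i$ an up-step, forcing via Lemma~\ref{lem:Basic_reflection_arguments}(iv) that $P_{i-1}$ lies on the red line, and then Lemma~\ref{lem:Basic_reflection_arguments}(i) applied to the preceding edge produces an earlier reflectable edge, contradicting the minimality of $i$. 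Hence $i\leq j$ and, via Lemma~\ref{lem:critical_path_ideal_inclusion}, $s$ inherits the structure of a minimal generator of $(M_{j,\mathbf{m}_j,k})$. Lemma~\ref{lem:exponent_of_last_variable} then gives $s\in \overline{\mathrm{Crit}}_{n,\mathbf{m},k,j}$. Any further membership $s\in \overline{\mathrm{Crit}}_{n,\mathbf{m},k,\ell}$ with $\ell<j$ would, by the converse direction of the same lemma, make $\truncs{\ell}(s)$ a critical monomial in $(M_{\ell,\mathbf{m}_\ell,k})\subseteq (M_{n,\mathbf{m},k})$, hence a proper divisor of $s$ in the ideal, contradicting minimality. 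Therefore $s\in\mathrm{Crit}_{n,\mathbf{m},k,j}\subseteq S_{n,\mathbf{m},k}$.

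For (iii), pure powers with distinct indices are incomparable, and the $\mathbf{m}$-freeness of $\mathrm{Crit}$-elements prevents any pure power from dividing them. A $\mathrm{Crit}$-element dividing $x_j^{m_j}$ must be a power $x_j^a$, and the exponent formula pins $a=k+\sum_{i<j}(m_i-1)$; this lies in $\mathrm{Crit}_{n,\mathbf{m},k,j}$ precisely when $a<m_j$, which is exactly the stated removal condition, so $x_j^{m_j}$ is dropped from $S_{n,\mathbf{m},k}$ exactly when it fails to be a minimal generator. For two $\mathrm{Crit}$-elements $t_1\in \mathrm{Crit}_{n,\mathbf{m},k,j_1}$ and $t_2\in \mathrm{Crit}_{n,\mathbf{m},k,j_2}$ with $t_1\mid t_2$, the condition $x_{j_1}\mid t_1\in \mathbf{k}[x_1,\dots,x_{j_1}]$ forces $j_1\leq j_2$; comparing the $\overline{\mathrm{Crit}}$ formulas for $t_1$ and for $\truncs{j_1}(t_2)$ and using the nonnegativity of the exponent differences $(t_2)_i-(t_1)_i$ forced by divisibility yields $\truncs{j_1}(t_2)=t_1$, so $t_2\in \overline{\mathrm{Crit}}_{n,\mathbf{m},k,j_1}$, contradicting the exclusion in the definition of $\mathrm{Crit}_{n,\mathbf{m},k,j_2}$ when $j_1<j_2$ and yielding $t_1=t_2$ when $j_1=j_2$. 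The principal technical obstacle is the reduction in step (ii) from criticality of the full $n$-step path of $s$ to criticality of its $j$-step truncation, which hinges on the delicate joint application of Lemma~\ref{lem:Basic_reflection_arguments}(i) and~(iv) to march the reflection index down from an arbitrary position to at most $\max(s)$.
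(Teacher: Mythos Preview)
Your steps (i) and (ii) are fine and essentially parallel the paper's argument. The problem is in step (iii), at the claim that two elements $t_1\in\mathrm{Crit}_{n,\mathbf{m},k,j_1}$ and $t_2\in\mathrm{Crit}_{n,\mathbf{m},k,j_2}$ with $t_1\mid t_2$ must satisfy $\truncs{j_1}(t_2)=t_1$. You say this follows by ``comparing the $\overline{\mathrm{Crit}}$ formulas for $t_1$ and for $\truncs{j_1}(t_2)$'', but there is no $\overline{\mathrm{Crit}}_{j_1}$ formula available for $\truncs{j_1}(t_2)$: membership of $t_2$ in $\mathrm{Crit}_{j_2}$ gives you the exponent identity at level $j_2$, and its \emph{non}-membership in $\overline{\mathrm{Crit}}_{j_1}$ tells you only that the identity at level $j_1$ \emph{fails} for $t_2$. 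Nothing forces $(t_2)_{j_1}=(t_1)_{j_1}$ or $\deg((t_2)_{\leq j_1})=\deg(t_1)$.

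Concretely, take $n=3$, $\mathbf{m}=(3,3,3)$, $k=1$. Then $t_1=x_1\in\mathrm{Crit}_{3,\mathbf{m},1,1}$ (since $s_1=k=1$), while $t_2=x_1^2x_3$ satisfies $s_3=1=2\cdot 3-1-6+2$, has $s_1=2\neq 1$ so $t_2\notin\overline{\mathrm{Crit}}_{1}$, and has $s_2=0$ so $t_2\notin\overline{\mathrm{Crit}}_{2}$; hence $t_2\in\mathrm{Crit}_{3,\mathbf{m},1,3}$. But $t_1\mid t_2$ and $\truncs{1}(t_2)=x_1^2\neq x_1=t_1$. So your deduction fails, $S_{n,\mathbf{m},k}$ is \emph{not} an antichain here, and indeed $x_1^2x_3$ is not a minimal generator of $(M_{3,\mathbf{m},1})$ (its lattice path has the vertex $(1,-1)$ strictly below the red line $y=-\tfrac12$, violating Lemma~\ref{lem:MinGens_no_path_vertex_below}(i)). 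The paper's own proof glosses over exactly the same point when it asserts that ``the definition of $s\in\mathrm{Crit}_{n,\mathbf{m},k,j}$ excludes the existence of such a $t$''; the exclusion in Definition~\ref{def:Sets_Crit_n_m_k_j} rules out $s\in\overline{\mathrm{Crit}}_{\ell}$, not the existence of a proper divisor $t\in\mathrm{Crit}_{\ell}$. What is actually needed for minimality is the geometric characterisation stated informally after Definition~\ref{def:Sets_Crit_n_m_k_j} (no proper initial segment critical), which in the example above would correctly exclude $x_1^2x_3$; the purely numerical exclusion via $\overline{\mathrm{Crit}}_\ell$ is strictly weaker.
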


\begin{proof}
 For each $1 \leq j \leq n$, let $\mathbf{m}_j = (m_1, \ldots, m_j)$ denote the prefix of $\mathbf{m}$ of length $j$.  
By Lemma~\ref{lem:exponent_of_last_variable}, the set $\mathrm{Crit}_{n,\mathbf{m},k,j}$ consists of critical $\mathbf{m}_j$-free monomials, hence
$\mathrm{Crit}_{n,\mathbf{m},k,j} \subseteq M_{j,\mathbf{m}_j,k}$.  
Together with Lemma~\ref{lem:critical_path_ideal_inclusion}, this implies that $S_{n,\mathbf{m},k} \subseteq M_{n,\mathbf{m},k}$.  
Moreover, Lemma~\ref{lem:exponent_of_last_variable} shows that $S_{n,\mathbf{m},k}$ contains all the minimal generators of $M_{n,\mathbf{m},k}$, so we conclude that  
$(S_{n,\mathbf{m},k}) = (M_{n,\mathbf{m},k})$.

It remains to prove that $S_{n,\mathbf{m},k}$ is minimal. Fix $j \in \{1, \ldots, n\}$ and let $s \in \mathrm{Crit}_{n,\mathbf{m},k,j}$.  
Since $\max(s) = j$, by Lemma~\ref{lem:exponent_of_last_variable}, the monomial $s$ can only be divisible by a generator $t \in \mathrm{Crit}_{n,\mathbf{m},k,\ell}$ for some $\ell < j$ in order to be redundant.  
The monomial $t$ has variable with maximum index $\ell$ and satisfies the degree condition~\eqref{eq:xn_degree_min_generator} with respect to $x_\ell$. However, the definition of $s \in \mathrm{Crit}_{n,\mathbf{m},k,j}$ in \eqref{eq:Sets_Crit_n_m_k_j} excludes the existence of such a $t$.  
Finally, note that $x_j^{m_j}$ is not a minimal generator if and only if there exists some $s = x_j^{d} \in \mathrm{Crit}_{n,\mathbf{m},k,j}$ with $d < m_j$. Since $s_j = \deg(s_{\leq j}) = \deg(s) = d$, the definition of $\mathrm{Crit}_{n,\mathbf{m},k,j}$ implies that this occurs precisely when  
$d=k+\sum_{i=1}^{j-1}(m_i-1)<m_j$, 
as claimed.  
Therefore, $S_{n,\mathbf{m},k}$ is indeed the minimal generating set.
\end{proof}

\begin{example}\label{ex:Min_Gens_Lattice_Paths_(3,2,2,3)_2}
    Let $n=4$, $\mathbf{m}=(3,2,2,3)$, and $k=2$. Then we have 
    \[
    \mathrm{Crit}_{n,\mathbf{m},k}=\{ x_1^2, x_1 x_2 x_3, x_2 x_3 x_4^2, x_1 x_3 x_4^2, x_1 x_2 x_4^2 \}\,,
    \]
    and $S_{n,\mathbf{m},k}=\{ x_2^2, x_3^2, x_4^3 \}\cup \mathrm{Crit}_{n,\mathbf{m},k}$. The lattice paths of the $\mathbf{m}$-free minimal generators can be seen in Figure~\ref{fig:Lattice_paths_MinGens_(3,2,2,3)}. Note that $x_1^3$ is not a minimal generator of $(M_{n,\mathbf{m},k})$ because $k+\sum_{i=1}^0 (m_i-1)=k+0=2<3=m_1$.

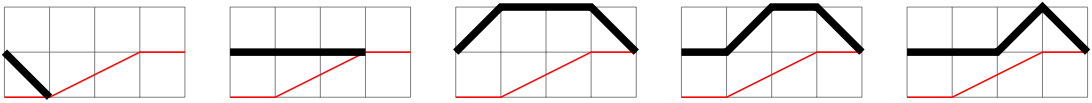
\begin{figure}[!ht]
\centering
\begin{tikzpicture}[scale=0.6]
    \begin{scope}[shift={(0,0)}]
        \draw[help lines] (0,-1) grid (4,1);
        \draw[line width=0.02cm, color=red] (0,-1)--(1,-1)--(2,-0.5)--(3,0)--(4,0);
        \draw[line width=0.1cm] (0,0)--(1,-1);
    \end{scope}

    \begin{scope}[shift={(5,0)}]
        \draw[help lines] (0,-1) grid (4,1);
        \draw[line width=0.02cm, color=red] (0,-1)--(1,-1)--(2,-0.5)--(3,0)--(4,0);
        \draw[line width=0.1cm] (0,0)--(1,0)--(2,0)--(3,0);
    \end{scope}

    \begin{scope}[shift={(10,0)}]
        \draw[help lines] (0,-1) grid (4,1);
        \draw[line width=0.02cm, color=red] (0,-1)--(1,-1)--(2,-0.5)--(3,0)--(4,0);
        \draw[line width=0.1cm] (0,0)--(1,1)--(2,1)--(3,1)--(4,0);
    \end{scope}

    \begin{scope}[shift={(15,0)}]
        \draw[help lines] (0,-1) grid (4,1);
        \draw[line width=0.02cm, color=red] (0,-1)--(1,-1)--(2,-0.5)--(3,0)--(4,0);
        \draw[line width=0.1cm] (0,0)--(1,0)--(2,1)--(3,1)--(4,0);
    \end{scope}

    \begin{scope}[shift={(20,0)}]
        \draw[help lines] (0,-1) grid (4,1);
        \draw[line width=0.02cm, color=red] (0,-1)--(1,-1)--(2,-0.5)--(3,0)--(4,0);
        \draw[line width=0.1cm] (0,0)--(1,0)--(2,0)--(3,1)--(4,0);
    \end{scope}
\end{tikzpicture}
\caption{The lattice paths of the monomials in the minimal generating set $S_{4,(3,2,2,3),2}$.}
\label{fig:Lattice_paths_MinGens_(3,2,2,3)}
\end{figure}

\end{example}

\begin{lemma}\label{lem:revlex_segment}
    Let $s \in M_{n,\mathbf{m},k}$ be a minimal generator of degree $d$. Let $t$ be any monomial of degree $d$ such that $t \succ s$ with respect to the reverse lexicographic ordering. Then $t \in (M_{n,\mathbf{m},k})$.
\end{lemma}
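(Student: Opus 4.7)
The plan is to show that the lattice path of $t$ is critical (in the sense of Definition~\ref{def:reflect}) by exhibiting an admissible reflection at the index $j := \max(s)$. Before that, I would dispose of the case in which $t$ is not $\mathbf{m}$-free.

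First, suppose $x_i^{m_i}$ divides $t$ for some $i$. If $x_i^{m_i}$ is a minimal generator listed in Theorem~\ref{thm:Gen_sys_for_critical_path_ideal}, we are done immediately. Otherwise the exclusion condition $k + \sum_{\ell<i}(m_\ell-1) < m_i$ holds; setting $d' := k + \sum_{\ell<i}(m_\ell-1)$, a direct substitution into Definition~\ref{def:Sets_Crit_n_m_k_j} with $s = x_i^{d'}$ shows $x_i^{d'} \in \mathrm{Crit}_{n,\mathbf{m},k,i}$. Since $x_i^{d'} \mid x_i^{m_i} \mid t$, this gives $t \in (M_{n,\mathbf{m},k})$.

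Assume henceforth that $t$ is $\mathbf{m}$-free. Since $s \in M_{n,\mathbf{m},k}$ is a minimal generator, Theorem~\ref{thm:Gen_sys_for_critical_path_ideal} places $s$ in $\mathrm{Crit}_{n,\mathbf{m},k,j}$ with $j = \max(s)$, and Definition~\ref{def:Sets_Crit_n_m_k_j} then yields
\[
s_j = 2d - k - \sum_{i<j} m_i + (j-1), \quad \text{where } d := \deg(s) = \deg(t).
\]
Let $i_0$ denote the largest index at which $t$ and $s$ differ. The reverse lexicographic order $t \succ s$ forces $t_{i_0} < s_{i_0}$, so $s_{i_0} > 0$ and therefore $i_0 \leq j$. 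Since $t_\ell = s_\ell = 0$ for all $\ell > j$, one has $\deg(t_{\leq j}) = d$, and the two relevant lattice-path vertices of $t$ are $A = (j-1,\, j-1-d+t_j)$ and $B = (j,\, j-d)$.

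The core computation is to evaluate the slope of the reflected edge $(A, B')$, where $B' = (j,\, 2L_{n,\mathbf{m},k}(j) - (j-d))$. Using $2L_{n,\mathbf{m},k}(j) = -k + 3j - \sum_{i=1}^j m_i$ from Definition~\ref{def:Red_Line} together with the displayed formula for $s_j$, this slope simplifies to $s_j - m_j + 2 - t_j$. This value lies in the admissible range $\{2-m_j,\, 3-m_j,\, \ldots,\, 1\}$: the upper bound $s_j - t_j \leq m_j - 1$ is immediate from $s_j \leq m_j - 1$ ($\mathbf{m}$-freeness of $s$) and $t_j \geq 0$, while the lower bound $s_j \geq t_j$ holds because either $i_0 < j$, forcing $t_j = s_j$, or $i_0 = j$, giving $t_j < s_j$. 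Combined with Lemma~\ref{lem:Basic_reflection_arguments}(ii), which handles admissibility of the remaining reflected edges beyond index $j$, this shows the reflection of $t$'s path at index $j$ is $\mathbf{m}$-admissible; hence $t$'s lattice path is critical, and $t \in M_{n,\mathbf{m},k} \subseteq (M_{n,\mathbf{m},k})$ by Definition~\ref{def:Ideal_of_critical_paths}.

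The main obstacle is simply the slope arithmetic; once the formula $s_j - m_j + 2 - t_j$ is in hand, the admissibility bounds are transparent, and the argument runs uniformly across the sub-case $i_0 < j$ (where $A$ coincides with the corresponding vertex of $s$'s path, so the reflection literally reproduces the one witnessing $s$'s criticality) and the sub-case $i_0 = j$ (where $A$ lies strictly below that vertex, but the slope still lands in the admissible window).
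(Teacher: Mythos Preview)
Your proof is correct. A small simplification: in the non-$\mathbf{m}$-free case you can simply cite Definition~\ref{def:Ideal_of_critical_paths}, which builds $P_{n,\mathbf{m}}$ into $(M_{n,\mathbf{m},k})$, so $x_i^{m_i}\mid t$ immediately gives $t\in P_{n,\mathbf{m}}\subseteq (M_{n,\mathbf{m},k})$; the detour through the exclusion condition and $\mathrm{Crit}_{n,\mathbf{m},k,i}$ is unnecessary (though not wrong).

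Your main argument differs from the paper's in execution. The paper first reduces to $j=n$ and then argues geometrically: it invokes Lemma~\ref{lem:MinGens_no_path_vertex_below} to locate the last vertex of $s$'s path relative to the red line, and splits into cases according to whether the penultimate vertex $C$ of $t$'s path lies above or on/below the red line. In the latter case it does \emph{not} reflect at index $j$ but instead finds an earlier crossing edge via Lemma~\ref{lem:Basic_reflection_arguments}(i). You instead compute the slope of the reflected edge at index $j$ directly as $s_j - m_j + 2 - t_j$ and verify the bounds $2-m_j$ and $1$ from $0\le t_j\le s_j\le m_j-1$; this shows that the reflection at index $j$ \emph{always} works, rendering the paper's case split superfluous. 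Your approach is more uniform and avoids the appeal to Lemma~\ref{lem:MinGens_no_path_vertex_below}; the paper's approach is more geometric and perhaps makes the picture clearer, but at the cost of extra cases.
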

\begin{proof}
    Without loss of generality, we may assume that $x_n$ divides $s$. Otherwise, if the variable of largest index dividing $s$ is $x_{n_1}$ and $\mathbf{m}_1$ denotes the vector consisting of the first $n_1$ entries of $\mathbf{m}$, then $s \in M_{n_1, \mathbf{m}_1, k}$. In this case, both $s$ and all monomials $t$ of degree $d$ with $t \succ s$ lie in $\mathbf{k}[x_1,\ldots,x_{n_1}]$, reducing the problem to fewer variables.

    Hence, we may assume the last edge $AB$ in the lattice path associated with $s$ does not go up. By Lemma~\ref{lem:MinGens_no_path_vertex_below}, the endpoint $B$ lies on or above the red path. Let $B'$ be the reflection of $B$. Denote by $CD$ the last edge of the path associated with $t$.
    Since $s$ is a minimal generator, Lemma~\ref{lem:MinGens_no_path_vertex_below}(ii) guarantees that the slope of $AB'$ is admissible. If $CD = AB$, then $CD'$ is admissible, and hence $t \in (M_{n,\mathbf{m},k})$.
    Otherwise, since $t \succ s$ and $\deg(t) = \deg(s)$, the paths of $t$ and $s$ share the common endpoint $B = D = (n, n-d)$. We may write $A = (n-1, n - d + \delta)$ and $C = (n-1, n - d + \epsilon)$ for some positive integers with $\epsilon < \delta$.
    Two cases arise:

    Case 1. If $C$ lies on or below the red path, then because the path of $t$ starts above the red path, it must contain an edge $EF$ ending on or crossing the red path. For such an edge, $EF'$ (where $F'$ is the reflection of $F$) is admissible, so $t \in (M_{n,\mathbf{m},k})$.

    Case 2. If $C$ lies above the red path, then since $B'$ lies on or below the red path, the slope of $CB'$ is negative but of smaller absolute value than the negative admissible slope of $AB'$. Hence, $CB'$ is admissible, and again $t \in (M_{n,\mathbf{m},k})$.
\end{proof}

Before stating the last result of this section, we need to recall the following definition.

\begin{definition}
Let $I$ be an ideal generated by a set $V$ of $\mathbf{m}$-free monomials. Then $I$ is called \emph{strongly $\mathbf{m}$-stable} if for any $sx_j\in V$ and any index $i<j$, if $sx_i$ is $\mathbf{m}$-free, then $sx_i\in I$.
\end{definition}

Lemma~\ref{lem:revlex_segment} yields the following result, resolving a problem posed in~\cite{kling2024grobnerbasesresolutionslefschetz}.

\begin{corollary}\label{cor:m_stability}
The $\mathbf{m}$-free part of the ideal $(M_{n,\mathbf{m},k})$ is strongly $\mathbf{m}$-stable.
\end{corollary}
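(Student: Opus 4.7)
The plan is to deduce the corollary directly from Lemma~\ref{lem:revlex_segment} by a short divisor-swapping argument. Let $u = s x_j$ be an $\mathbf{m}$-free monomial in $(M_{n,\mathbf{m},k})$, and suppose $i < j$ with $s x_i$ also $\mathbf{m}$-free; the goal is to produce a minimal generator of $(M_{n,\mathbf{m},k})$ dividing $s x_i$.

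Since $u$ lies in the ideal, it is divisible by some minimal generator $g$. As $u$ is $\mathbf{m}$-free, so is $g$ (its exponents are bounded by those of $u$), so $g \in \mathrm{Crit}_{n,\mathbf{m},k}$ by Theorem~\ref{thm:Gen_sys_for_critical_path_ideal}. If $x_j$ does not divide $g$, then $g$ already divides $s$ and hence divides $sx_i$, and we are done. Otherwise write $g = g' x_j$, where $g'$ divides $s$, and set $h = g' x_i$. Then $h$ divides $sx_i$, so $h$ is $\mathbf{m}$-free, and $\deg(h) = \deg(g)$.

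Next I would compare $g$ and $h$ under the reverse lexicographic ordering with $x_1 \succ \cdots \succ x_n$. They differ only in the exponents of $x_i$ and $x_j$, and the largest index where they differ is $j$, at which $g$ has exponent $g'_j + 1$ and $h$ has exponent $g'_j$. By the definition of reverse lex recalled in Section~\ref{sec:Preliminaries}, this forces $h \succ g$. Applying Lemma~\ref{lem:revlex_segment} to the minimal generator $g$ and the monomial $h$ of the same degree yields $h \in (M_{n,\mathbf{m},k})$, and since $h$ divides $sx_i$, we conclude $sx_i \in (M_{n,\mathbf{m},k})$, which is exactly the strong $\mathbf{m}$-stability condition.

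There is no real obstacle: the work is entirely front-loaded into Lemma~\ref{lem:revlex_segment}. The only subtlety is checking that the divisor $g$ of $u$ we pick is $\mathbf{m}$-free (hence a genuine element of $\mathrm{Crit}_{n,\mathbf{m},k}$ rather than one of the pure powers $x_\ell^{m_\ell}$) and that the swap $g' x_j \mapsto g' x_i$ indeed moves up in the reverse lex order; both are immediate from the hypotheses.
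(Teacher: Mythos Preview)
Your proof is correct and follows essentially the same route as the paper's: both arguments reduce immediately to Lemma~\ref{lem:revlex_segment} by passing to an $\mathbf{m}$-free minimal generator, performing the swap $x_j \mapsto x_i$, and observing that the result is larger in reverse lex and hence still in the ideal. The only cosmetic difference is that the paper starts directly from a minimal generator and allows swapping any $x_\ell$ with $\ell>i$, whereas you start from an arbitrary $\mathbf{m}$-free element and first pass to a minimal divisor; the verifications are the same.
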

\begin{proof}
Let $s \in M_{n,\mathbf{m},k}$ be an $\mathbf{m}$-free minimal generator. Clearly, $s \neq 1$. Let $x_j$ be the variable of highest index dividing $s$. Then $s \in M_{j,\mathbf{m},k}$. 
Now, let $x_i$ be a variable with $i < j$ such that $s_i+1 < m_i$, and let $x_\ell$ be a variable dividing $s$ with $i < \ell$. Set $t=x_i(s/x_\ell)$.    
We need to show that $t$ has a divisor in $M_{n,\mathbf{m},k}$. It suffices to find such a divisor in $M_{j,\mathbf{m},k}$.
Since $\deg(t) = \deg(s)$ and $t \succ s$ in the reverse lexicographic ordering, the existence of this divisor follows immediately from Lemma~\ref{lem:revlex_segment}.
\end{proof}

\section{Gr\"obner bases}\label{sec:GB}

This section contains our main results. We begin by showing that the  ideal $(M_{n,\mathbf{m},k})$, generated by the monomials corresponding to critical $\mathbf{m}$-admissible lattice paths and variable powers introduced in Section~\ref{sec:InitialIdeal}, coincides with the initial ideal of $I_{n,\mathbf{m},k}$ with respect to the reverse lexicographic order induced by $x_1 \succ \cdots \succ x_n$ 
(Theorem~\ref{thm:Initial_Ideal_Equality}). We further prove that this equality holds for any term order compatible with this variable ordering (Theorem~\ref{thm:GB_determined_by_var_ranking}). In particular, $\operatorname{in}(I_{n,\mathbf{m},k}) = (M_{n,\mathbf{m},k})$. As a direct consequence, we provide a new proof that all monomial complete intersections possess the Strong Lefschetz Property over fields of characteristic zero (Corollary~\ref{cor:has_SLP}). We also describe the reduced Gr\"obner basis of $I_{n,\mathbf{m},k}$ and establish Theorem~\ref{thm:Main_Thm_Introduction} from the introduction.
In \S\ref{sub:var}, we establish an upper bound on the number of distinct reduced Gr\"obner bases of $I_{n,\mathbf{m},k}$. When the ideal $P_{n,\mathbf{m}}$ is equigenerated, we determine the exact number of such bases (Proposition~\ref{prop:Cardinality_GFan_Equigenerated}). We then study the degree sequences of the elements in these Gr\"obner bases. In particular, Proposition~\ref{prop:GB_degree_element_count_sequences_general_case} provides an efficient method for computing the degree sequences via differences of Hilbert series coefficients of $R/P_{n,\mathbf{m}}$.

\subsection{Initial ideal}
Throughout, for a monomial $p$, we write $p_i$ for the exponent of $x_i$ in $p$, so that $p = x_1^{p_1} \cdots x_n^{p_n}$.  
Our first goal is to construct, for each $s \in M_{n,\mathbf{m},k}$ that corresponds to an $\mathbf{m}$-free generator of $(M_{n,\mathbf{m},k})$, a polynomial $g_s \in I_{n,\mathbf{m},k}$ such that $\operatorname{in}(g_s) = s$.  
The key technical ingredient for this construction is provided by the following lemma.

\begin{lemma}
\label{lem:Counting_lemma}
Let $p$, $q$, and $r$ be monomials such that $pq = x_1^{\alpha_1} \cdots x_n^{\alpha_n}$ and $r|pq$.  Then
\begin{equation}\label{eq:formula}
\sum_{\substack{v| p \\ v|r}} (-1)^{\deg(v)}\left(\prod_{i=1}^n\binom{r_i}{v_i}\binom{\alpha_i-v_i}{p_i-v_i} \right) = \prod_{i=1}^n \binom{\alpha_i-r_i}{p_i}.
\end{equation}
\end{lemma}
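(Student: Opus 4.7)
The plan is to reduce this multivariate identity to a product of one-variable binomial identities, and then to prove the one-variable statement by a generating-function manipulation. Since the divisibility conditions $v \mid p$ and $v \mid r$ translate coordinatewise to $0 \leq v_i \leq \min(p_i, r_i)$ for every $i$, and since both the sign $(-1)^{\deg v} = \prod_i (-1)^{v_i}$ and the product of binomial coefficients are multiplicative across coordinates, the left-hand side of \eqref{eq:formula} factors as
\[
\prod_{i=1}^n \left(\sum_{v_i=0}^{\min(p_i,r_i)} (-1)^{v_i}\binom{r_i}{v_i}\binom{\alpha_i-v_i}{p_i-v_i}\right).
\]
The right-hand side already splits as $\prod_i \binom{\alpha_i-r_i}{p_i}$, so it suffices to prove the scalar identity
\[
\sum_{v=0}^{\min(p,r)} (-1)^v\binom{r}{v}\binom{\alpha-v}{p-v} \;=\; \binom{\alpha-r}{p}
\]
for nonnegative integers $\alpha,p,r$ with $r \leq \alpha$; the inequality $r_i \leq \alpha_i$ in each coordinate is guaranteed by $r \mid pq$.

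For the scalar identity I would read off the coefficient of $x^p$ on both sides of a generating-function equality. Writing $\binom{\alpha-v}{p-v} = [x^{p-v}](1+x)^{\alpha-v}$ and extending the summation to all $v \geq 0$ (harmless since the binomial factors vanish once $v$ exceeds $\min(p,r)$), the left-hand side equals the coefficient of $x^p$ in
\[
\sum_{v \geq 0} \binom{r}{v}(-x)^v(1+x)^{\alpha-v} \;=\; (1+x)^\alpha \left(1-\frac{x}{1+x}\right)^{\!r} \;=\; (1+x)^{\alpha-r},
\]
whose coefficient at $x^p$ is precisely $\binom{\alpha-r}{p}$. A purely combinatorial route is equally quick: both sides count the $p$-subsets of an $\alpha$-element set that are disjoint from a distinguished $r$-subset $R$, the left side being inclusion--exclusion over the elements of $R$.

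I do not anticipate a genuine difficulty here. The only subtlety is bookkeeping of summation limits, so that passing from the truncated sum $v \leq \min(p,r)$ to an infinite sum and back is justified by the vanishing of binomial coefficients once $v > p$ or $v > r$; once this is noted, the multivariate claim reduces to a one-line generating-function identity.
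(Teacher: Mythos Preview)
Your proof is correct and follows essentially the same approach as the paper: both factor the multivariate sum into a product of one-variable identities and then establish the scalar case, the paper via the inclusion--exclusion interpretation you mention at the end, while you lead with the generating-function computation.
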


\begin{proof}
First, consider the case $n=1$. We then have integers $p, q,$ and $r$ such that $p + q = \alpha$ and $r \leq \alpha$. The identity to prove reduces to
\begin{equation}
\label{eq:n=1_inclusion_exclusion}
\sum_{k=0}^{\infty} (-1)^k \binom{r}{k} \binom{\alpha - k}{p - k} = \binom{\alpha - r}{p}.
\end{equation}
Formally, the summation index should be restricted to $k \leq \min\{p,r\}$. However, since the terms vanish for $k > \min\{p,r\}$, extending the sum to infinity does not affect its value.

The right-hand side of \eqref{eq:n=1_inclusion_exclusion} counts the number of ways to choose $p$ elements from a set of size $\alpha$, excluding a subset of $r$ forbidden elements. The term $\binom{r}{k} \binom{\alpha - k}{p - k}$ counts the number of ways to choose exactly $k$ forbidden elements and $p-k$ from the remaining elements. The alternating sum corresponds exactly to the inclusion-exclusion principle, thereby proving the identity.

For general $n$, we apply the case $n = 1$ coordinatewise, adopting the convention that $\binom{n}{k} = 0$ whenever $k < 0$ or $k > n$.  
By interchanging the order of summation and product, the left-hand side of~\eqref{eq:formula} becomes
\[
\sum_{v} \prod_{i=1}^n (-1)^{\deg(v_i)} \binom{r_i}{v_i} \binom{\alpha_i - v_i}{p_i - v_i} 
= \prod_{i=1}^n \left( \sum_{v_i=0}^\infty (-1)^{\deg(v_i)} \binom{r_i}{v_i} \binom{\alpha_i - v_i}{p_i - v_i} \right) 
= \prod_{i=1}^n \binom{\alpha_i - r_i}{p_i},
\]
as desired.
\end{proof}

\begin{setup}\label{setup_for_main_theorem}
Let $s =x_1^{s_1}\cdots x_n^{s_n}= s' x_n^{s_n}\in R$ be a monomial satisfying the conditions
\begin{itemize}[leftmargin=2em]
    \item $s_n > 0$,
    \item $s_n = 2 \deg(s) - k - \sum_{i=1}^{n-1} m_i + (n - 1)$,
    \item $s' \mid x_1^{m_1 - 1} \cdots x_{n-1}^{m_{n-1} - 1}$.
\end{itemize}
Next, let 
$R' = \mathbf{k}[x_1, \dots, x_{n-1}, y] \big/ (x_1^{m_1}, \dots, x_{n-1}^{m_{n-1}})$
and set
$\ell = x_1 + \cdots + x_{n-1} + y$. We can then associate to $s$ two polynomials $f_s$ and $g_s$ in $R'$, defined as follows.

\begin{itemize}
\item[{\rm (i)}] \textbf{Construction of $g_s$:} Let 
\[
g_s = \sum_{s'' \mid s'} \lambda_{s''} \cdot s'' \cdot y^{\deg(s) - \deg(s'')},
\]
where for each monomial $s'' \mid s'$, we define
\[ 
\lambda_{s''} = \left( \prod_{i=1}^{n-1} \frac{s_i!}{s''_i!} \binom{m_i - s''_i - 1}{s_i - s''_i} \right) \cdot \frac{s_n!}{(\deg(s) - \deg(s''))!}.
\] 

\item[{\rm (ii)}]  \textbf{Construction of $f_s$:} Let $u$ be the unique monomial such that 
\[
u \cdot s' = x_1^{m_1 - 1} \cdots x_{n-1}^{m_{n-1} - 1}.
\]
Then define
\[
f_s = \sum_{v \mid u} \mu_v \cdot v \cdot \ell^{\deg(s) - \deg(v) - k},
\]
where for each monomial $v \mid u$, we set
\[
\mu_v = (-1)^{\deg(v)} \left( \prod_{i=1}^{n-1} \frac{s_i!}{v_i!} \binom{m_i - 1 - v_i}{u_i - v_i} \right) \cdot \frac{s_n!}{(\deg(s) - \deg(v))!}.
\]
\end{itemize}
\end{setup}

Table~\ref{table_polynomials} summarizes the key notations introduced in Setup~\ref{setup_for_main_theorem}.

\begin{table}[h]\label{table_polynomials}
\centering
\footnotesize
\setlength{\tabcolsep}{4pt}
\renewcommand{\arraystretch}{1.4}
\begin{tabular}{|p{4.54cm}|p{3cm}|p{7.1cm}|}
\hline
Polynomial definitions & Auxiliary monomials & Coefficients \\ \hline
$g_s = \sum_{s'' \mid s'} \lambda_{s''} \, s'' \, y^{\deg(s) - \deg(s'')}$ &
\parbox{3cm}{
\vspace{0.5em} $s' = \frac{s}{x_n^{s_n}}$\vspace{0.1em}} &
$\lambda_{s''} = \biggl(\prod_{i=1}^{n-1} \frac{s_i!}{s''_i!} \binom{m_i - s''_i - 1}{s_i - s''_i} \biggr) \frac{s_n!}{(\deg(s) - \deg(s''))!}$ \\ \hline

$f_s = \sum_{v \mid u} \mu_v \, v \, \ell^{\deg(s) - \deg(v) - k}$ &
\parbox{3cm}{
\vspace{0.5em}  
$u = \frac{\prod_{i=1}^{n-1} x_{i}^{m_i-1}}{s'}$
\vspace{0.5em}} &
$\mu_v = (-1)^{\deg(v)} \biggl(\prod_{i=1}^{n-1} \frac{s_i!}{v_i!} \binom{m_i - 1 - v_i}{u_i - v_i} \biggr) \frac{s_n!}{(\deg(s) - \deg(v))!}$ \\ \hline
\end{tabular}

\vspace{0.3em} 

\caption{Definitions of $f_s$ and $g_s$ 
used in Setup~\ref{setup_for_main_theorem} and Proposition~\ref{prop:Ideal_membership}.}
\label{tab:fs_gs}
\end{table}

\begin{proposition}
    \label{prop:Ideal_membership}
Let $s\in R$ be a monomial satisfying the assumptions of Setup~\ref{setup_for_main_theorem}, and $\ell = x_1 + \cdots + x_{n-1} + y$. Let $f_s, g_s \in R'$ be the associated polynomials defined therein. Then
\[
g_s = f_s \cdot \ell^k \quad \text{in } R'.
\]
\end{proposition}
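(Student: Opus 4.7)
The plan is to expand $f_s\cdot\ell^k$ by the multinomial theorem, extract the coefficient of each monomial $M = x_1^{a_1}\cdots x_{n-1}^{a_{n-1}} y^{b}$ with $a_i<m_i$ that survives in $R'$, and show that it matches the coefficient of the same monomial in $g_s$. Both polynomials are homogeneous of degree $N:=\deg(s)$, and the monomials appearing in $g_s$ are exactly those $M=x^a y^{N-|a|}$ with $a_i\leq s_i$ for every $i$; so it suffices to verify the match on this subset and to check that any other $M$ surviving in $R'$ has vanishing coefficient in $f_s\cdot\ell^k$.

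First I would write $f_s\cdot\ell^k = \sum_{v\mid u}\mu_v\, v\, \ell^{N-\deg(v)}$, expand each $\ell^{N-\deg(v)}$ by the multinomial theorem, and collect the coefficient of a fixed $M=x^ay^b$ with $|a|+b=N$ and $a_i<m_i$. The $(N-\deg(v))!$ appearing in $\mu_v$ cancels the one in the multinomial coefficient, and after grouping the $v$-independent factors the coefficient becomes
\[
c_M \;=\; \frac{s_n!}{b!}\cdot\prod_{i=1}^{n-1}\frac{s_i!}{a_i!} \cdot \sum_{v}(-1)^{\deg(v)}\prod_{i=1}^{n-1}\binom{a_i}{v_i}\binom{m_i-1-v_i}{u_i-v_i},
\]
where the inner sum runs over monomials $v$ in $x_1,\ldots,x_{n-1}$ with $v_i\leq \min(a_i,u_i)$ for all $i$.

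The key step is to recognize this inner sum as an instance of the Counting Lemma (Lemma~\ref{lem:Counting_lemma}) applied in $n-1$ variables with $p:=u$, $q:=s'$, and $r:=x^a$. Since $u_i+s_i=m_i-1$, we have $\alpha_i=m_i-1$, and the hypothesis $r\mid pq$ holds because $a_i\leq m_i-1$. The lemma then collapses the inner sum to $\prod_{i=1}^{n-1}\binom{m_i-1-a_i}{u_i}=\prod_{i=1}^{n-1}\binom{m_i-1-a_i}{m_i-1-s_i}$. If $a_i>s_i$ for some $i$, this product vanishes, hence $c_M=0$, matching the absence of $M$ from $g_s$. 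If instead $a_i\leq s_i$ for every $i$, then $s'':=x^a$ divides $s'$, the symmetry $\binom{m_i-1-a_i}{m_i-1-s_i}=\binom{m_i-1-a_i}{s_i-a_i}$ applies, and substituting this back produces exactly $\lambda_{s''}$. Summing over $M$ gives $f_s\cdot\ell^k=g_s$ in $R'$.

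The main obstacle is essentially one of bookkeeping: the formulas for $\mu_v$ and $\lambda_{s''}$ are built from long products of factorials and binomial coefficients, and matching them with Lemma~\ref{lem:Counting_lemma} requires carefully rearranging the multinomial coefficient, factoring out the $v$-independent $s_i!/a_i!$ terms, and identifying the residual alternating sum as the Counting Lemma's left-hand side. Once the substitution $(p,q,r)=(u,s',x^a)$ is made, the remainder of the argument is a routine case split on whether $a_i\leq s_i$; no further combinatorial input is needed.
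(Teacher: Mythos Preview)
Your proposal is correct and follows essentially the same route as the paper: expand $f_s\cdot\ell^k$ term by term, extract the coefficient of a fixed monomial, and collapse the resulting alternating sum via Lemma~\ref{lem:Counting_lemma} with $p=u$, $\alpha_i=m_i-1$, and $r$ the $x$-part of the target monomial. The only cosmetic difference is that the paper first splits $\ell=\overline{\ell}+y$ and uses the binomial theorem to isolate the $y$-power before applying the multinomial expansion to $\overline{\ell}$, whereas you apply the multinomial theorem to $\ell$ in one step; the intermediate expressions coincide, and the handling of the vanishing case $a_i>s_i$ is identical.
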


\begin{proof}
For simplicity, we write $g$ and $f$ for $g_s$ and $f_s$, respectively. To prove the equality $g = f \cdot \ell^k$, we will show that both sides have the same coefficient in front of every monomial. Since both $g$ and $f \cdot \ell^k$ are homogeneous of degree $\deg(s)$, it suffices to consider monomials of that degree.

Let $\theta$ be a monomial of degree $\deg(s)$ in the variables $x_1, \dots, x_{n-1}, y$, and let $r$ be the degree of $\theta$ in $x_1, \dots, x_{n-1}$. Inserting the definition of $\lambda_{s''}$ into that of $g$, we find that the coefficient of $\theta$ in $g$ is
\begin{equation}
\label{eq: g_coeff_of_theta}
[\theta]\, g = \frac{s_n!}{(\deg(s)-r)!} \left( \prod_{i=1}^{n-1} \frac{s_i!}{\theta_i!} \binom{m_i - 1 - \theta_i}{s_i - \theta_i} \right).
\end{equation}
We now compute the coefficient of $\theta$ in $f \cdot \ell^k$ and show that it matches the coefficient of $\theta$ in $g$.
Write $\ell = \overline{\ell} + y$, where $ \overline{\ell} =x_1+\cdots+x_{n-1}$. Then
\begin{align*}
f \cdot \ell^k &= \sum_{v \mid u} \mu_v \cdot v \cdot (\overline{\ell} + y)^{\deg(s) - \deg(v)} \\
&= \sum_{v \mid u} \mu_v \cdot v \cdot \sum_{i=0}^{\deg(s)-\deg(v)} \binom{\deg(s) - \deg(v)}{i} \cdot \overline{\ell}^i \cdot y^{\deg(s) - \deg(v) - i}.
\end{align*}
Since $\theta$ has degree $\deg(s)-r$ in $y$, we extract the coefficient of $y^{\deg(s) - r}$ from $f \cdot \ell^k$. Let $\mathbf{x}^{\mathbf{m}-1} = x_1^{m_1 - 1} \cdots x_{n-1}^{m_{n-1} - 1}$. Using the multinomial theorem and the fact that we are working in the quotient ring $\mathbf{k}[x_1, \dots, x_{n-1}, y]/(x_1^{m_1}, \dots, x_{n-1}^{m_{n-1}})$, we get
\begin{align*}
[y^{\deg(s)-r}] (f \cdot \ell^k) &= \sum_{v \mid u} \mu_v \cdot v \cdot \binom{\deg(s) - \deg(v)}{r - \deg(v)} \cdot \overline{\ell}^{r - \deg(v)} \\
&= \sum_{v \mid u} \mu_v \cdot v \cdot \binom{\deg(s) - \deg(v)}{r - \deg(v)} \sum_{\substack{w \mid \mathbf{x}^{\mathbf{m}-1} \\ \deg(w) = r - \deg(v)}} \frac{(r - \deg(v))!}{w_1! \cdots w_{n-1}!} \cdot w \\
&= \frac{s_1! \cdots s_n!}{(\deg(s) - r)!} \sum_{v \mid u} \sum_{\substack{w \\ vw \mid \mathbf{x}^{\mathbf{m}-1} \\ \deg(vw) = r}} (-1)^{\deg(v)} \left( \prod_{i=1}^{n-1} \frac{1}{v_i! w_i!} \binom{m_i - 1 - v_i}{u_i - v_i} \right) \cdot vw,
\end{align*}
where in the last equality we used the definition of $\mu_v$.
Therefore, the coefficient of $\theta$ in $f \cdot \ell^k$ is
\begin{align*}
[\theta] (f \cdot \ell^k) &= \frac{s_1! \cdots s_n!}{\theta_1! \cdots \theta_{n-1}! (\deg(s) - r)!} \sum_{\substack{v \mid u \\ v \mid \theta}} (-1)^{\deg(v)} \left( \prod_{i=1}^{n-1} \frac{\theta_i!}{v_i!(\theta_i - v_i)!} \binom{m_i - 1 - v_i}{u_i - v_i} \right) \\
&= \frac{s_1! \cdots s_n!}{\theta_1! \cdots \theta_{n-1}! (\deg(s) - r)!} \sum_{\substack{v \mid u \\ v \mid \theta}} (-1)^{\deg(v)} \left( \prod_{i=1}^{n-1} \binom{\theta_i}{v_i} \binom{m_i - 1 - v_i}{u_i - v_i} \right).
\end{align*}
Now, applying Lemma~\ref{lem:Counting_lemma} with $r_i = \theta_i$, $\alpha_i = m_i - 1$, and $p_i = u_i$, we obtain
\[
[\theta] (f \cdot \ell^k) = \frac{s_1! \cdots s_n!}{\theta_1! \cdots \theta_{n-1}! (\deg(s) - r)!} \prod_{i=1}^{n-1} \binom{m_i - 1 - \theta_i}{u_i}.
\]
Finally, since $\binom{m_i - 1 - \theta_i}{u_i} = \binom{m_i - 1 - \theta_i}{s_i - \theta_i}$, this equals $[\theta]\, g$ from~\eqref{eq: g_coeff_of_theta}, completing the proof.
\end{proof}

\begin{remark}\label{rem:incarnations_of_g_s}
    Consider the polynomials $f_s$, $g_s$, and $\ell=x_1+\cdots +x_{n-1}+y$ in the ring $R'=\mathbf{k}[x_1, \dots, x_{n-1}, y] \big/ (x_1^{m_1}, \dots, x_{n-1}^{m_{n-1}})$ as defined in Setup~\ref{setup_for_main_theorem}. For any integer $\overline{n}\geq n$, consider the injective ring homomorphism
    \begin{equation}\label{eq:injective_ring_homomorphism}
        \phi_{\overline{n}}:\mathbf{k}[x_1,\ldots,x_{n-1},y]\to\mathbf{k}[x_1,\ldots,x_{n-1},x_n,\ldots,x_{\overline{n}}]
    \end{equation}
    defined by $x_i\mapsto x_i$ for $1\leq i<n$ and $y\mapsto x_n+\cdots +x_{\overline{n}}$. Then, since $g_s=f_s\cdot \ell^{k}$ in $R'$ and $\phi_{\overline{n}}(\ell)=x_1+\cdots + x_{\overline{n}}$, it follows that $\phi_{\overline{n}}(g_s)\in I_{\overline{n},\overline{\mathbf{m}},k}$ for any $\overline{\mathbf{m}}$ obtained by extending $(m_1,\ldots,m_{n-1})$ with positive integers $m_{n}, \ldots, m_{\overline{n}}$. For simplicity, we also denote $\phi_{\overline{n}}(g_s)$ by $g_s$.
\end{remark}

We now have all the necessary ingredients to prove our first main result, that the initial ideal of $I_{n,\mathbf{m},k}$ coincides with the ideal $(M_{n,\mathbf{m},k})$ defined via critical $\mathbf{m}$-admissible lattice paths.

\begin{theorem}\label{thm:Initial_Ideal_Equality}
We have $\ini(I_{n,\mathbf{m},k})=(M_{n,\mathbf{m},k})$ with respect to the reverse lexicographic ordering.
\end{theorem}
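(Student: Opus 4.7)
The plan is to establish $(M_{n,\mathbf{m},k}) \subseteq \ini(I_{n,\mathbf{m},k})$ by exhibiting, for each element of the minimal generating set $S_{n,\mathbf{m},k}$ from Theorem~\ref{thm:Gen_sys_for_critical_path_ideal}, a polynomial in $I_{n,\mathbf{m},k}$ whose initial monomial realizes that generator; equality then follows from a Hilbert-series squeeze. The pure powers $x_j^{m_j}$ lie in $I_{n,\mathbf{m},k}$ by inspection. For $s\in \mathrm{Crit}_{n,\mathbf{m},k,j}$, regard $s$ as a monomial in $\mathbf{k}[x_1,\ldots,x_j]$ and apply Setup~\ref{setup_for_main_theorem} with $n$ replaced by $j$; Definition~\ref{def:Sets_Crit_n_m_k_j} supplies the required hypotheses. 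Proposition~\ref{prop:Ideal_membership} then produces $g_s=f_s\cdot\ell^k$ in the associated ring $R'$, and the homomorphism $\phi_n:y\mapsto x_j+\cdots+x_n$ from Remark~\ref{rem:incarnations_of_g_s} lifts $g_s$ into $I_{n,\mathbf{m},k}$.

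The next step is to verify $\ini(\phi_n(g_s))=s$ in reverse lexicographic order with $x_1\succ\cdots\succ x_n$. Expanding
\[
\phi_n(g_s) = \sum_{s''\mid s'} \lambda_{s''}\, s''\, (x_j+\cdots+x_n)^{\deg(s)-\deg(s'')},
\]
the only contribution to the coefficient of $s$ comes from the summand $s''=s'$ via its multinomial term $x_j^{s_j}$, and $\lambda_{s'}$ collapses to $1$ since $s'_i=s_i$ for all $i<j$ and $\deg(s)-\deg(s')=s_j$. Every other monomial in the expansion has the form $s''\cdot x_j^{\alpha_j}\cdots x_n^{\alpha_n}$, and either (a) some $\alpha_i>0$ for an $i>j$, so the largest index at which this monomial differs from $s$ exceeds $j$; or (b) $\alpha_{j+1}=\cdots=\alpha_n=0$ with $s''\subsetneq s'$, so the largest differing index equals $j$ and $\alpha_j>s_j$. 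In both cases the competitor has strictly greater exponent than $s$ at the largest differing index, and hence sits strictly below $s$ in reverse lex. Positivity of every $\lambda_{s''}$ and of every multinomial coefficient rules out cancellations, so $\ini(\phi_n(g_s))=s$.

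For the reverse inequality, the established inclusion combined with Remark~\ref{rem:Hilbert_series_via_initial_ideals_prelims} and Proposition~\ref{prop:Counting_Critical_Paths} yields
\[
\HS(R/I_{n,\mathbf{m},k};t) = \HS(R/\ini(I_{n,\mathbf{m},k});t) \leq \HS(R/(M_{n,\mathbf{m},k});t) = [(1-t^k)\HS(R/P_{n,\mathbf{m}};t)]
\]
coefficient-wise. Meanwhile, the exact sequence
\[
0 \to K \to (R/P_{n,\mathbf{m}})(-k) \xrightarrow{\cdot (x_1+\cdots+x_n)^k} R/P_{n,\mathbf{m}} \to R/I_{n,\mathbf{m},k} \to 0
\]
gives $\HS(R/I_{n,\mathbf{m},k};t) = (1-t^k)\HS(R/P_{n,\mathbf{m}};t) + \HS(K;t)$ with $\HS(K;t)\geq 0$. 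In any degree $d$ before the truncation point, the two bounds coincide and force $\HF(R/I_{n,\mathbf{m},k},d)=\HF(R/(M_{n,\mathbf{m},k}),d)$; at or after truncation the upper bound is $0$ and nonnegativity forces $\HF(R/I_{n,\mathbf{m},k},d)=0$ as well. Hence $\HS(R/\ini(I_{n,\mathbf{m},k});t)=\HS(R/(M_{n,\mathbf{m},k});t)$, which together with the already established inclusion upgrades to equality of ideals. The principal obstacle is the leading-term analysis above; the Hilbert-series squeeze is essentially bookkeeping and, crucially, does not invoke SLP for $R/P_{n,\mathbf{m}}$, which instead follows as a corollary.
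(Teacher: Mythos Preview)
Your proof is correct and follows essentially the same approach as the paper: exhibit $g_s\in I_{n,\mathbf{m},k}$ with $\ini(g_s)=s$ for each $s\in\mathrm{Crit}_{n,\mathbf{m},k}$ to get $(M_{n,\mathbf{m},k})\subseteq\ini(I_{n,\mathbf{m},k})$, then close the Hilbert-series squeeze using Proposition~\ref{prop:Counting_Critical_Paths}. Your treatment is more explicit in two places where the paper is terse---you spell out the revlex comparison that pins down $\ini(\phi_n(g_s))=s$ (the paper says only ``by the definition of the degree reverse lexicographic ordering''), and you derive the lower bound $\HS(R/I_{n,\mathbf{m},k};t)\geq(1-t^k)\HS(R/P_{n,\mathbf{m}};t)$ from the multiplication-by-$\ell^k$ exact sequence rather than citing it as the ``coefficientwise smallest'' series---but these are elaborations of the same argument, not a different route.
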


\begin{proof}
We begin by observing that the Hilbert series of $R/I_{n,\mathbf{m},k}$ and $R/\ini(I_{n,\mathbf{m},k})$ coincide. The coefficientwise smallest Hilbert series that $R/I_{n,\mathbf{m},k}$ can attain is 
    $\big[(1 - t^k)\HS(R/P_{n,\mathbf{m}}; t)\big]$,
where the square brackets denote truncation at the first non-positive coefficient.
From Proposition~\ref{prop:Counting_Critical_Paths}, we know that this series coincides with $\HS(R/(M_{n,\mathbf{m},k});t)$. 

Now, let $s \in \mathrm{Crit}_{n,\mathbf{m},k}$ be any $\mathbf{m}$-free generator. Every such $s$ satisfies the degree condition of Proposition~\ref{prop:Ideal_membership} with respect to the variable of maximal index dividing it, and thus there is an ideal element $g_s \in I_{n,\mathbf{m},k}$ as in Remark~\ref{rem:incarnations_of_g_s} associated with it. But $\ini(g_s)=s$ by the definition of the degree reverse lexicographic ordering. Thus, $(M_{n,\mathbf{m},k})\subseteq \ini(I_{n,\mathbf{m},k})$, and $\HS(R/(\ini(I_{n,\mathbf{m},k})));t)$ is coefficientwise bounded above by the series $\HS(R/(M_{n,\mathbf{m},k});t)=[(1-t^k)\HS(R/P_{n,\mathbf{m}};t)]$. The minimality property of this series now implies that 
$\HS(R/\ini(I_{n,\mathbf{m},k}); t)$ equals 
$\big[(1 - t^k) \HS(R/P_{n,\mathbf{m}}; t)\big]$, 
which in turn establishes the claim 
$\ini(I_{n,\mathbf{m},k}) = (M_{n,\mathbf{m},k})$.
\end{proof}

With this, we can now give a new proof that all monomial complete intersection have the SLP over fields of characteristic zero.

\begin{corollary}\label{cor:has_SLP}
Let $\mathbf{k}$ be a field of characteristic zero. Then the monomial complete intersection $\mathbf{k}[x_1,\dots, x_n]/(x_1^{m_1},\dots, x_n^{m_n})=R/P_{n,\mathbf{m}}$ has the strong Lefschetz property.
\end{corollary}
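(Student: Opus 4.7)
The approach is to combine Theorem~\ref{thm:Initial_Ideal_Equality} and Proposition~\ref{prop:Counting_Critical_Paths} with the standard reformulation of the SLP as an equality of Hilbert series. Write $A = R/P_{n,\mathbf{m}}$ and fix $\ell = x_1+\cdots+x_n$. By the paragraph following \eqref{eq:def_SLP}, since $P_{n,\mathbf{m}}$ is a monomial ideal it suffices to check that $\mu_{\ell^e,d}\colon A_d\to A_{d+e}$ has maximal rank for this particular $\ell$ and for every $d,e\geq 0$.

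The key reduction is to translate this pointwise rank condition into a single identity of Hilbert series. Starting from the exact sequence
\begin{equation*}
0\to \ker(\mu_{\ell^e})\to A(-e)\xrightarrow{\cdot \ell^e} A\to A/\ell^e A\to 0,
\end{equation*}
one obtains $\HS(A/\ell^e A;t) = (1-t^e)\HS(A;t) + t^e\HS(\ker(\mu_{\ell^e});t)$. Since every coefficient of $\HS(A/\ell^e A;t)$ is non-negative, this yields the coefficientwise lower bound
\begin{equation*}
\HS(A/\ell^e A;t)\ \geq\ \bigl[(1-t^e)\HS(A;t)\bigr],
\end{equation*}
with equality in every degree if and only if every map $\mu_{\ell^e,d}$ has maximal rank. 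Here the symmetry and unimodality of $\HS(A;t)$ (Remark~\ref{rem:Properties_Artinian_monomial_CIs}(3),(4)) are needed to ensure that the coefficients of $(1-t^e)\HS(A;t)$ are first non-negative and then non-positive, so that truncating at the first non-positive coefficient agrees with taking the pointwise positive part $\max\{0,\,\HF(A,d)-\HF(A,d-e)\}$ that witnesses pointwise maximal rank.

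It remains to verify that equality is attained. Observing $A/\ell^e A = R/I_{n,\mathbf{m},e}$ and chaining Remark~\ref{rem:Hilbert_series_via_initial_ideals_prelims}, Theorem~\ref{thm:Initial_Ideal_Equality}, and Proposition~\ref{prop:Counting_Critical_Paths} gives
\begin{equation*}
\HS(A/\ell^e A;t) \ =\ \HS(R/\ini(I_{n,\mathbf{m},e});t) \ =\ \HS(R/(M_{n,\mathbf{m},e});t) \ =\ \bigl[(1-t^e)\HS(A;t)\bigr],
\end{equation*}
which matches the lower bound exactly. Therefore every $\mu_{\ell^e,d}$ has maximal rank, and $A$ has the SLP.

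The only conceptually delicate point is the second paragraph: deducing pointwise maximal rank from a single Hilbert-series identity. The argument is standard but hinges on the sign pattern of $(1-t^e)\HS(A;t)$, which is why the symmetry and unimodality of $\HS(A;t)$ from Remark~\ref{rem:Properties_Artinian_monomial_CIs} are explicitly invoked. Everything else is a direct appeal to results already proved in Sections~\ref{sec:InitialIdeal} and~\ref{sec:GB}.
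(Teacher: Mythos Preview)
Your proof is correct and follows essentially the same route as the paper: reformulate the SLP as the Hilbert-series identity $\HS(A/\ell^e A;t)=[(1-t^e)\HS(A;t)]$, then verify it via Theorem~\ref{thm:Initial_Ideal_Equality} and Proposition~\ref{prop:Counting_Critical_Paths}. The only difference is that you spell out the exact-sequence argument and the role of symmetry and unimodality in making the truncation $[\,\cdot\,]$ coincide with the pointwise positive part, whereas the paper simply asserts this equivalence; note that invoking Remark~\ref{rem:Properties_Artinian_monomial_CIs}(3),(4) here is not circular, since symmetry and unimodality of $\HS(R/P_{n,\mathbf{m}};t)$ follow from the factorization $\prod_i(1+t+\cdots+t^{m_i-1})$ independently of the SLP.
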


\begin{proof}
Recall that $A=R/P_{n,\mathbf{m}}$ has the SLP if the multiplication maps
\[
\mu_{\ell,d,k} \colon A_d \to A_{d+e}, \quad f \mapsto \ell^k \cdot f
\]
have full rank for all $d,k\geq 0$ and $\ell=x_1+\cdots + x_n$. This is equivalent to $A/(\ell^k)$ having Hilbert series $\big[(1 - t^k) \, \HS(A; t)\big]$ for all $k \geq 0$, where the brackets denote truncation at the first non-positive term.
 Using Proposition \ref{prop:Counting_Critical_Paths} and Theorem \ref{thm:Initial_Ideal_Equality}, we have
\begin{align*}
\HS(A/(\ell^k);t) &= \HS(R/I_{n,\mathbf{m},k};t)
= \HS(R/(\ini(I_{n,\mathbf{m},k}));t)\\ 
&= \HS(R/(M_{n,\mathbf{m},k});t)
= \big[(1 - t^k) \, \HS(R/P_{n,\mathbf{m}}; t)\big],
\end{align*}
showing that $A$ has the SLP.
\end{proof}

Having established that $R/P_{n,\mathbf{m}}$ satisfies the SLP, a natural question is whether the algebras $R/I_{n,\mathbf{m},k}$ or $R/\operatorname{in}(I_{n,\mathbf{m},k})$ also have the SLP. However, this is not true in general. Indeed, fix integers $n\geq 4$ and $m=k\geq 2$. After a generic change of variables, we have an isomorphism
$R/I_{n,\mathbf{m},m} \cong R/(\ell_1^m, \dots, \ell_{n+1}^m)$ for some generic linear forms $\ell_i$. It is known that these algebras fail the WLP for all $n \geq 4$ except for when $(n,m)\in \{(4,2),(5,2),(5,3),(7,2)\}$, in which case it has the WLP \cite[Theorem 1.1]{WLP_n+1_forms}. Hence, the same holds for $R/I_{n,\mathbf{m},m}$. Using a result of Conca \cite[Theorem 1.1]{Conca}, which states that if $R/\operatorname{in}(I_{n,\mathbf{m},m})$ has the WLP, then so does $R/I_{n,\mathbf{m},m}$, we deduce that $R/\operatorname{in}(I_{n,\mathbf{m},m})$ also fails the WLP for the same parameters where $R/I_{n,\mathbf{m},m}$ fails.
Finally, a direct computation in Macaulay2 confirms that $R/\operatorname{in}(I_{n,\mathbf{m},m})$ has the WLP for $(n,m) \in \{(4,2), (5,2), (5,3), (7,2)\}$.

Our next goal is to determine the reduced Gr\"obner basis of $I_{n,\mathbf{m},k}$. To achieve this, we analyze the non-leading terms of the polynomials $g_s$. Here, $I_{\overline{n},\overline{\mathbf{m}},k}$ denotes the ideal associated with any pair $(\overline{n},\overline{\mathbf{m}})$ where $\overline{n} \geq n$ and $\overline{\mathbf{m}}$ is obtained from $\mathbf{m}$ by appending positive integers $m_{n+1}, \ldots, m_{\overline{n}}$.

\begin{lemma}\label{lem:Tail_terms_closed_form_GB_polys}
    Let $g_s$ be as defined in Setup~\ref{setup_for_main_theorem}(i). Then $\mathrm{in}(g_s) = s$, and all non-leading terms of $g_s$ lie outside $\mathrm{in}(I_{\overline{n},\overline{\mathbf{m}},k})$.
\end{lemma}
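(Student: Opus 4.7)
The plan is to establish the two claims separately, treating the identification of the leading monomial as a direct computation and the tail containment as a combinatorial argument based on critical lattice paths.

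For $\mathrm{in}(g_s) = s$, I would use that $g_s$ is homogeneous of degree $\deg(s)$, so the reverse lexicographic order coincides with degrevlex on its support. After applying $\phi_{\overline{n}}$, every monomial in $\mathrm{supp}(g_s)$ has the form $s'' \cdot x_n^{a_n}\cdots x_{\overline{n}}^{a_{\overline{n}}}$ with $s''\mid s'$ and $\sum_{i=n}^{\overline{n}} a_i = \deg(s)-\deg(s'')$. Under the revlex order with $x_1\succ\cdots\succ x_{\overline{n}}$, the largest such monomial is forced to have $a_j=0$ for every $j>n$ (otherwise its higher exponent on a large-index variable makes it smaller) and, among the remaining candidates $s''\cdot x_n^{\deg(s)-\deg(s'')}$, the one with the smallest $x_n$-exponent; this minimum equals $\deg(s)-\deg(s')=s_n$ and is attained only when $s''=s'$, yielding $s$ itself. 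A short direct calculation shows $\lambda_{s'}=1$ (the factorial ratios collapse and each binomial becomes $\binom{m_i-s_i-1}{0}=1$), and the multinomial coefficient extracting $x_n^{s_n}$ from $(x_n+\cdots+x_{\overline{n}})^{s_n}$ is $1$, so $s$ appears with coefficient exactly one.

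For the second claim, I plan to show that every tail monomial $t=s''\cdot x_n^{a_n}\cdots x_{\overline{n}}^{a_{\overline{n}}}\neq s$ with nonzero coefficient is excluded from $\mathrm{in}(I_{\overline{n},\overline{\mathbf{m}},k})=(M_{\overline{n},\overline{\mathbf{m}},k})$ by the two possible obstructions: $t$ cannot be divisible by any $x_i^{m_i}$, and $t$ cannot be divisible by any element of $\mathrm{Crit}_{\overline{n},\overline{\mathbf{m}},k,j}$. The first obstruction is immediate for $i<n$ since $s''_i\leq s_i<m_i$. For the second, the strategy is to appeal to Lemma~\ref{lem:exponent_of_last_variable}: if a divisor of $t$ lay in $\mathrm{Crit}_{\overline{n},\overline{\mathbf{m}},k,j}$ at some index $j$, then that divisor would satisfy the critical-path degree equation $s_j=2\deg(s_{\leq j})-k-\sum_{i<j} m_i+(j-1)$. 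Comparing this against the analogous equation already satisfied by $s$ at index $n$, and using the relation $\deg(t)=\deg(s)$ together with $t_i\leq s_i$ for $i<n$, should force a numerical mismatch.

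The main obstacle, and the step I expect to be most delicate, is converting the putative divisor relation into a contradiction in full generality, since the combinatorics depend on how the $y$-mass is redistributed among $x_n,\ldots,x_{\overline{n}}$. I would organize this via the lattice-path picture of Section~\ref{sec:InitialIdeal}: represent $t$ by its $\overline{\mathbf{m}}$-admissible path, track how it differs from the critical path of $s$, and use Lemma~\ref{lem:Basic_reflection_arguments} to locate its prefix endpoints relative to the graph of $L_{\overline{n},\overline{\mathbf{m}},k}$. In the case $s''=s'$, the path of $t$ coincides with that of $s$ up to index $n-1$ and then redistributes its vertical descent among indices $n,\ldots,\overline{n}$; in the case $\deg(s'')<\deg(s')$, the intermediate endpoint of $t$'s path sits strictly above that of $s$'s by at least the difference $\deg(s')-\deg(s'')$. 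In both scenarios, the endpoint $(j,j-\deg(t_{\leq j}))$ fails to lie at the exact position required by the degree equation of Lemma~\ref{lem:exponent_of_last_variable}, so no prefix of $t$ can be critical, and simultaneously $\overline{\mathbf{m}}$-freeness can be read off from the restricted range of the redistributed exponents.
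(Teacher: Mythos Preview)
Your argument for $\mathrm{in}(g_s)=s$ is correct and more detailed than the paper's one-line appeal to revlex.

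For the tail claim, your outline has the right shape but the decisive inequality is missing, and the logical bridge you propose is not quite valid as stated. You plan to conclude ``no prefix of $t$ can be critical'' from the observation that the endpoint $(j,j-\deg(t_{\leq j}))$ ``fails to lie at the exact position required by the degree equation'' of Lemma~\ref{lem:exponent_of_last_variable}. But equation~\eqref{eq:xn_degree_min_generator} pins down the height at which the reflected $j$-th edge attains the \emph{minimal} admissible slope $2-m_j$; an endpoint that is merely \emph{different} from that height may sit lower, in which case the reflected edge has a larger, possibly still admissible, slope and the prefix can still be critical. What you actually need is that for every $j\geq n$ the endpoint lies \emph{above} the critical height by enough that the reflected edge has slope $<2-m_j$---and that is precisely the computation you have not carried out.

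The paper does this directly. For indices $j\leq n$ it uses that $t_{\leq n}$ is a proper divisor of $s$ and invokes the (tacitly assumed) minimality of $s$ as a generator of $(M_{n,\mathbf{m},k})$. For each edge between $x=n+\delta$ and $x=n+\delta+1$ with $\delta\geq 0$, it bounds $\deg(t_{\leq n+\delta+1})\leq \deg(s)$, computes the vertical distance $\epsilon$ from the endpoint to its reflection, and combines the identity $s_n=2\deg(s)-k-\sum_{i<n}m_i+(n-1)$ with $s_n\leq m_n-2$ to obtain $\epsilon\geq m_{n+\delta+1}$; the reflected edge then has slope at most $1-m_{n+\delta+1}$ and is not admissible. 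This single estimate is the ingredient your proposal lacks. A smaller point: your claim that $\overline{\mathbf{m}}$-freeness of $t$ ``can be read off from the restricted range of the redistributed exponents'' is not automatic---the exponents $a_j$ for $j\geq n$ may exceed $m_j-1$, and those terms are handled by the reduction modulo $P_{\overline{n},\overline{\mathbf{m}}}$ that Theorem~\ref{thm:Reduced_GB} makes explicit.
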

\begin{proof}
    From the definition of the reverse lexicographic ordering, it is clear that $\mathrm{in}(g_s) = s$.

Next, we analyze the non-leading terms of $g_s$. For the sake of contradiction, suppose that one of them, say $t$, lies in the initial ideal. Consider the monomial $t_{\leq n}$, obtained by setting the variables $x_{n+1}, \ldots, x_{\overline{n}}$ equal to $1$ in $t$. By construction, $t_{\leq n}$ is a strict divisor of $s$. Since $s$ is a minimal generator of $M_{n,\mathbf{m},k}$, it follows that $t_{\leq n}$ is not in the ideal $\mathrm{in}(I_{n,\mathbf{m},k})$.

Since $t$ is in the initial ideal, this implies that we can initiate a reflection of the lattice path corresponding to $t$ at some edge $AB$ situated between the vertical lines $x = n + \delta$ and $x = n + \delta + 1$ for some $\delta \geq 0$, where $AB$ is the first such edge. Recall that this reflection yields an edge $AB'$ with admissible slope, where $B'$ is the reflection of $B$. Moreover, in this case we have $t_{n+\delta+1}>0$.

The $y$-coordinate of the point $B$ is given by $n + \delta + 1 - \deg(t_{\leq n+\delta+1})$. This quantity represents the number of vertical steps taken by the lattice path before reaching the vertical line $x = n+\delta+1$. 
Since $\deg(t_{\leq n+\delta+1}) \leq d=\deg(s)$, we obtain that the $y$-coordinate of $B$ is at least $n + \delta + 1 - d$. The point $A$ lies to the left of $B$, and by the definition of admissible slopes, the $y$-coordinate of $A$ is not lower than one less that of $B$. 
Moreover, the $y$-coordinate of the point on the red path corresponding to $B$ is given by
$\frac{3(n+\delta+1) - \left(\sum_{i=1}^{n+\delta+1} m_i\right) - k}{2}$
by Remark~\ref{rem:Mixed_socle_and_related_degrees}. Hence, the vertical distance between $B$ and its reflection $B'$ is at least $\epsilon$ where
\begin{eqnarray*}
\epsilon &=& 2 \cdot \left( \frac{2n + 2\delta + 2 - 2d - 3(n + \delta + 1) + \sum_{i=1}^{n+\delta+1} m_i + k}{2} \right) \\
         &=& -\delta + \left( -2d + k + \sum_{i=1}^{n-1} m_i - (n - 1) \right) + m_n - 2 + \sum_{i=n+1}^{n+\delta+1} m_i \\
         &\geq& -\delta + \sum_{i=n+1}^{n+\delta+1} m_i = -\delta + \sum_{i=n+1}^{n+\delta} m_i + m_{n+\delta+1},
\end{eqnarray*}
where the last inequality holds because
$-2d + k + \sum_{i=1}^{n-1} m_i - (n - 1) + m_n - 2 \geq 0$,
which follows from the expression for $s_n$ given in~\eqref{eq:xn_degree_min_generator}, combined with the condition $s_n \leq m_n - 2$.

Since each $m_i > 0$, we have $\epsilon \geq m_{n+\delta+1}$. By construction, $\epsilon - 1$ provides a lower bound for the absolute value of the negative slope of the segment $AB'$. Consequently, the slope is at most $1-\epsilon \leq 1 - m_{n+\delta+1}$, which is not admissible, leading to the desired contradiction. Therefore, we conclude that
$t \notin \mathrm{in}(I_{\overline{n}, \overline{\mathbf{m}}, k})$,
as claimed.
\end{proof}

\subsection{Reduced Gr\"{o}bner basis}
We now describe the reduced Gr\"obner basis of $I_{n,\mathbf{m},k}$ by restating and proving the main part of Theorem~\ref{thm:Main_Thm_Introduction} from the introduction.

\begin{theorem}\label{thm:Reduced_GB}
The reduced Gr\"{o}bner basis of $I_{n,\mathbf{m},k}$ with respect to the reverse lexicographic order with $x_1 \succ \cdots \succ x_n$ is given by
\begin{equation}\label{eq:Reduced_GB}
    G_{n,\mathbf{m},k} = \{ x_1^{m_1}, \ldots, x_n^{m_n} \} \cup \bigcup_{j=1}^n \{ g_s \mid s \in \mathrm{Crit}_{n,\mathbf{m},k,j} \}\,,
\end{equation}
where $x_j^{m_j}$ is removed if $k+\sum_{i=1}^{j-1}(m_i-1)<m_j$. Here, for any monomial $s = s' x_j^{s_j}$ in $\mathrm{Crit}_{n,\mathbf{m},k,j}$, the corresponding polynomial $g_s$ is 
\begin{equation}\label{eq:more_explicit_gb_polynomial_g_s}
    g_s = \sum_{s'' \mid s'} \lambda_{s''} \, s'' (x_j + \cdots + x_n)^{\deg(s) - \deg(s'')},
\end{equation}
where
\[
 \lambda_{s''} = \left( \prod_{i=1}^{j-1} \frac{s_i!}{s''_i!} \binom{m_i - s''_i - 1}{s_i - s''_i} \right) \cdot \frac{s_j!}{(\deg(s) - \deg(s''))!}
\]
and where we reduce the monomials in the expansion of $(x_j + \cdots + x_n)^{\deg(s) - \deg(s'')}$ modulo $(x_j^{m_j},\dots,x_n^{m_n})$ if necessary. 
\end{theorem}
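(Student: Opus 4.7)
The plan is to assemble this theorem directly from three pieces already in hand: Proposition~\ref{prop:Ideal_membership} together with Remark~\ref{rem:incarnations_of_g_s} give ideal membership of the $g_s$; Theorem~\ref{thm:Initial_Ideal_Equality} plus Theorem~\ref{thm:Gen_sys_for_critical_path_ideal} identify the needed initial monomials; and Lemma~\ref{lem:Tail_terms_closed_form_GB_polys} controls the tails. So the proof is essentially a check that leading monomials and tails behave as advertised when $s$ is drawn from $\mathrm{Crit}_{n,\mathbf{m},k,j}$ rather than from $\mathrm{Crit}_{n,\mathbf{m},k,n}$.

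First I would verify ideal membership. Fix $s \in \mathrm{Crit}_{n,\mathbf{m},k,j}$, so that $s \in \mathbf{k}[x_1,\dots,x_j]$ and by Lemma~\ref{lem:exponent_of_last_variable} (with $j$ in place of $n$) the hypotheses of Setup~\ref{setup_for_main_theorem} are met in the ring $\mathbf{k}[x_1,\dots,x_{j-1},y]/(x_1^{m_1},\dots,x_{j-1}^{m_{j-1}})$. Proposition~\ref{prop:Ideal_membership} then yields $g_s = f_s \cdot (x_1+\cdots+x_{j-1}+y)^k$ there. Applying the homomorphism $\phi_n$ of Remark~\ref{rem:incarnations_of_g_s} (with $\overline{n}=n$) sends $y \mapsto x_j+\cdots+x_n$, giving an identity $\phi_n(g_s) = \phi_n(f_s) \cdot (x_1+\cdots+x_n)^k$ modulo $(x_1^{m_1},\dots,x_{j-1}^{m_{j-1}})$, so $\phi_n(g_s) \in I_{n,\mathbf{m},k}$. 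The polynomial displayed in \eqref{eq:more_explicit_gb_polynomial_g_s} is $\phi_n(g_s)$ further reduced modulo $(x_j^{m_j},\dots,x_n^{m_n}) \subseteq I_{n,\mathbf{m},k}$, so it too lies in $I_{n,\mathbf{m},k}$. The pure powers $x_j^{m_j}$ that remain in $G_{n,\mathbf{m},k}$ are obviously in the ideal.

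Next I would compute leading terms. The summand of $g_s$ indexed by $s''=s'$ is $\lambda_{s'} \, s' (x_j+\cdots+x_n)^{s_j}$ with
\[
\lambda_{s'} = \Bigl(\prod_{i=1}^{j-1} \tfrac{s_i!}{s_i!}\tbinom{m_i-s_i-1}{0}\Bigr)\cdot \tfrac{s_j!}{s_j!} = 1,
\]
and the expansion of $(x_j+\cdots+x_n)^{s_j}$ contributes the monomial $s = s' x_j^{s_j}$ with coefficient $1$. In the reverse lexicographic order with $x_1 \succ \cdots \succ x_n$, this monomial is strictly greater than every other monomial appearing in $s' (x_j+\cdots+x_n)^{s_j}$ (those have higher-index support) and also than every monomial arising from any $s'' \neq s'$ (those have strictly smaller exponent on some $x_i$ with $i<j$ compensated by higher-index variables). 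Because $s$ is $\mathbf{m}$-free, the reduction modulo $(x_j^{m_j},\dots,x_n^{m_n})$ leaves the coefficient of $s$ equal to $1$, so $\operatorname{in}(g_s)=s$. The set of leading monomials of the elements of $G_{n,\mathbf{m},k}$ is therefore exactly the set $S_{n,\mathbf{m},k}$ of Theorem~\ref{thm:Gen_sys_for_critical_path_ideal}, which minimally generates $(M_{n,\mathbf{m},k})=\operatorname{in}(I_{n,\mathbf{m},k})$ by Theorem~\ref{thm:Initial_Ideal_Equality}. Hence $G_{n,\mathbf{m},k}$ is a minimal Gröbner basis.

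Finally I would check reducedness. The leading coefficient of every element is $1$ by the above. For the tails, Lemma~\ref{lem:Tail_terms_closed_form_GB_polys}, applied inside $\mathbf{k}[x_1,\dots,x_n]$ to the unreduced $\phi_n(g_s)$, shows that none of its non-leading monomials lies in $\operatorname{in}(I_{n,\mathbf{m},k})$. Reducing modulo $(x_j^{m_j},\dots,x_n^{m_n})$ consists of deleting those tail monomials which are divisible by some $x_i^{m_i}$ with $j\le i \le n$, so the surviving tail monomials are a subset of the non-leading monomials of $\phi_n(g_s)$ and therefore still avoid the initial ideal. Pure powers have no tails, so reducedness holds, and uniqueness of the reduced Gröbner basis finishes the proof. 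The only real obstacle is the leading-term identification of Step~2 and the tail control of Step~3; both are already encapsulated in Lemma~\ref{lem:Tail_terms_closed_form_GB_polys}, so the proof reduces to the bookkeeping outlined above.
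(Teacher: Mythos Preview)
Your proposal is correct and follows essentially the same approach as the paper: both assemble the result from Theorem~\ref{thm:Initial_Ideal_Equality}, Theorem~\ref{thm:Gen_sys_for_critical_path_ideal}, and Lemma~\ref{lem:Tail_terms_closed_form_GB_polys}, with ideal membership supplied by Proposition~\ref{prop:Ideal_membership} and Remark~\ref{rem:incarnations_of_g_s}. Your write-up is more explicit about the leading-term computation and about why the reduction modulo $(x_j^{m_j},\dots,x_n^{m_n})$ cannot disturb the leading monomial or introduce initial-ideal tails, points the paper leaves implicit in its citation of Lemma~\ref{lem:Tail_terms_closed_form_GB_polys}.
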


\begin{proof}
By Theorem~\ref{thm:Initial_Ideal_Equality}, we have $\ini(I_{n,\mathbf{m},k}) = (M_{n,\mathbf{m},k})$. Moreover, Theorem~\ref{thm:Gen_sys_for_critical_path_ideal} shows that $(M_{n,\mathbf{m},k})$ is minimally generated by the selected pure powers together with the monomials $s$ satisfying $s \in \mathrm{Crit}_{n,\mathbf{m},k,j}$ for some $j$. 
For each $s \in \mathrm{Crit}_{n,\mathbf{m},k,j}$, the corresponding polynomial $g_s$ is monic with leading monomial $s$, and by Lemma~\ref{lem:Tail_terms_closed_form_GB_polys}, all its tail terms lie outside the initial ideal $\ini(I_{n,\mathbf{m},k})$. 
This verifies that the collection $\{g_s \mid s \in \mathrm{Crit}_{n,\mathbf{m},k,j},\ j=1,\dots,n\}$, together with the selected pure powers, forms the reduced Gr\"obner basis of $I_{n,\mathbf{m},k}$.
\end{proof}

\subsection{Variations of Gr\"obner bases}\label{sub:var}
Here, we present an alternative description of the polynomials in the reduced Gr\"obner basis $G_{n,\mathbf{m},k}$ defined in \eqref{eq:Reduced_GB}. We show that this basis depends solely on the underlying variable order, rather than on the particular monomial ordering chosen. Moreover, we explore bounds on the number of distinct reduced Gr\"obner bases of the ideal~$I_{n,\mathbf{m},k}$.

\begin{proposition}\label{prop:GBPolys_Support_and_coeffs}
Let $s$ be an $\mathbf{m}$-free monomial in $M_{n,\mathbf{m},k}$ and assume $s_n\neq 0$.  
Then there exists a \emph{unique} polynomial $g_s \in G_{\overline{n}, \overline{\mathbf{m}}, k}$ with leading term $s$, given by
\begin{equation}\label{eq:GBPoly_alternative_form}
		g_s=s+\sum_{t\in \mathrm{Tail}(s)} \lambda_t\cdot  t\, ,
	\end{equation}
where the tail terms and coefficients are defined by
\[
\begin{aligned}
    \mathrm{Tail}(s) = \big\{\, t \in \mathrm{Mon}(R) : \;&
    t \text{ is } \overline{\mathbf{m}}\text{-free}, \quad \deg(t) = \deg(s), \quad s \succ t, \\
    & t \notin \mathrm{in}(I_{\overline{n}, \overline{\mathbf{m}}, k}) \text{ and }
    t_i \leq s_i \quad  \forall i < n 
     \big\},
\end{aligned}
\]
and for each $t \in \mathrm{Tail}(s)$,
\[
\lambda_t =  
\left(\prod_{i=1}^{n-1} \binom{m_i - t_i - 1}{s_i - t_i}\right) \cdot \frac{s_1! \cdots s_n!}{t_1! \cdots t_n!}.
\]
\end{proposition}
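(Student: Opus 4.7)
My plan is to derive Proposition~\ref{prop:GBPolys_Support_and_coeffs} by unfolding the closed form of $g_s$ supplied by Theorem~\ref{thm:Reduced_GB}, then pruning the support via Lemma~\ref{lem:Tail_terms_closed_form_GB_polys}.

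First, I verify that $s$, viewed as a monomial in $\mathbf{k}[x_1,\ldots,x_{\overline{n}}]$, belongs to $\mathrm{Crit}_{\overline{n},\overline{\mathbf{m}},k,n}$. It is $\overline{\mathbf{m}}$-free because $s_i<m_i$ for $i\leq n$ and $s_i=0<m_i$ for $i>n$; its variable of largest index is $x_n$ by hypothesis; and the degree identity required by Definition~\ref{def:Sets_Crit_n_m_k_j} is exactly the content of Lemma~\ref{lem:exponent_of_last_variable}. The prefix conditions $\overline{\mathrm{Crit}}_{\overline{n},\overline{\mathbf{m}},k,\ell}$ with $\ell<n$ are not satisfied, since they would force $s$ to also lie in $\mathrm{Crit}_{n,\mathbf{m},k,\ell}$, which is impossible given $s_n\neq 0$. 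Theorem~\ref{thm:Reduced_GB} therefore furnishes a unique $g_s\in G_{\overline{n},\overline{\mathbf{m}},k}$ with $\ini(g_s)=s$, presented in the closed form
\[
g_s=\sum_{s''\mid s'}\lambda_{s''}\,s''\,(x_n+\cdots+x_{\overline{n}})^{\deg(s)-\deg(s'')},
\]
reduced modulo $(x_n^{m_n},\ldots,x_{\overline{n}}^{m_{\overline{n}}})$, with $\lambda_{s''}$ as in Setup~\ref{setup_for_main_theorem}. Uniqueness is inherited from the uniqueness of the reduced Gr\"{o}bner basis.

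Next I expand and collect terms. Applying the multinomial theorem, each summand contributes $\lambda_{s''}\binom{\deg(s)-\deg(s'')}{t_n,\ldots,t_{\overline{n}}}\,s''\cdot x_n^{t_n}\cdots x_{\overline{n}}^{t_{\overline{n}}}$. The resulting monomial $t$ satisfies $t_i=s''_i$ for $i<n$, so for fixed $t$ the unique contributing $s''$ is $x_1^{t_1}\cdots x_{n-1}^{t_{n-1}}$, and the divisibility $s''\mid s'$ translates to the constraint $t_i\leq s_i$ for $i<n$. Substituting the closed form of $\lambda_{s''}$ and cancelling the $(\deg(s)-\deg(s''))!$ factor against the denominator of the multinomial coefficient yields exactly the claimed expression for $\lambda_t$. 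Reduction modulo the pure powers $x_j^{m_j}$ for $j\geq n$ simply discards monomials that fail to be $\overline{\mathbf{m}}$-free, so every $\overline{\mathbf{m}}$-free $t$ with $\deg(t)=\deg(s)$ and $t_i\leq s_i$ for $i<n$ appears in $g_s$ with the stated strictly positive coefficient.

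Finally I cut the support down to $\{s\}\cup\mathrm{Tail}(s)$. Lemma~\ref{lem:Tail_terms_closed_form_GB_polys} asserts that every non-leading term of $g_s$ lies outside $\ini(I_{\overline{n},\overline{\mathbf{m}},k})$, which imposes precisely the non-membership condition in the definition of $\mathrm{Tail}(s)$. The remaining requirement $s\succ t$ follows from a direct reverse-lex comparison: any $t\neq s$ with $t_i\leq s_i$ for $i<n$ and $\deg(t)=\deg(s)$ either uses a variable $x_i$ with $i>n$, in which case $s$ beats $t$ at that largest index, or is supported on $x_1,\ldots,x_n$ with some $t_i<s_i$ for $i<n$, forcing $t_n>s_n$ and again $s\succ t$. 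The only nontrivial ingredient of the proof is Lemma~\ref{lem:Tail_terms_closed_form_GB_polys}, which is already established in the previous subsection; everything else is essentially bookkeeping with the multinomial theorem, and I anticipate no real obstacle beyond keeping the factorial cancellations clean.
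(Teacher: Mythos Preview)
Your argument is correct and follows essentially the same route as the paper: both expand the closed form of $g_s$ from Theorem~\ref{thm:Reduced_GB} via the multinomial theorem, identify the unique $s''$ contributing to each monomial $t$, cancel the $(\deg(s)-\deg(s''))!$ against the multinomial denominator, and then invoke Lemma~\ref{lem:Tail_terms_closed_form_GB_polys} for the support restriction. The paper organizes this as a two-sided comparison of supports followed by a coefficient match, while you go in one direction only; the content is the same.

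One small wobble: in your first paragraph you try to deduce the degree identity of Definition~\ref{def:Sets_Crit_n_m_k_j} from Lemma~\ref{lem:exponent_of_last_variable}, but that lemma's forward direction already presupposes that $s$ is a \emph{minimal} generator of $(M_{n,\mathbf{m},k})$, which is exactly what membership in $\mathrm{Crit}_{\overline{n},\overline{\mathbf{m}},k,n}$ amounts to. The paper sidesteps this by simply starting from the Gr\"obner basis element $g_s$, i.e.\ taking minimality as implicit in the hypothesis; you should do the same rather than attempt to derive it.
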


\begin{proof}
Let $g_s$ denote the Gr\"obner basis polynomial defined in Equation~\eqref{eq:more_explicit_gb_polynomial_g_s}, and let $\overline{g}_s$ be the polynomial given by Equation~\eqref{eq:GBPoly_alternative_form}. We need to show that $g_s = \overline{g}_s$.

We begin by showing that $g_s$ and $\overline{g}_s$ have the same support. Take any monomial $t$ in the support of $g_s$. By construction, $t$ is $\overline{\mathbf{m}}$-free and has the same degree as $s$. Moreover, Lemma~\ref{lem:Tail_terms_closed_form_GB_polys} guarantees that $t \prec s$ and that $t \notin \mathrm{in}(I_{\overline{n}, \overline{\mathbf{m}}, k})$. 
Finally, if $x_i$ divides $t$ for some $i < \max(s) = n$, then writing $t = s'' q$, where $q$ involves only variables with indices at least $n$ and $s''$ divides $s$, implies $t_i = s''_i \leq s_i$. Therefore, $t$ lies in the support of $\overline{g}_s$.

Conversely, let $t$ be a monomial in the support of $\overline{g}_s$. Write $t = p q$ with 
$p \in \mathbf{k}[x_1, \ldots, x_{n-1}]$ and $q \in \mathbf{k}[x_n, \ldots, x_{\overline{n}}]$. 
The defining condition on $t$ implies that $p$ divides 
$s' = s / x_n^{s_n}$. Hence, we can write $p = s''$ for some monomial $s''$ dividing $s'$. 
Since $t$ and $s$ have the same total degree, it follows that $t = p q$ appears as a monomial in the expansion of $g_s$ corresponding to the choice $p = s''$. Therefore, the supports of $g_s$ and $\overline{g}_s$ coincide.

\medskip

Next, consider a monomial $t = p q$ as above. In $\overline{g}_s$, its coefficient is
\[
\left(\prod_{i=1}^{n-1} \binom{m_i - t_i - 1}{s_i - t_i} \right) \frac{s_1! \cdots s_{n}!}{t_1! \cdots t_{\overline{n}}!}
= \left(\prod_{i=1}^{n-1} \binom{m_i - p_i - 1}{s_i - p_i} \right) \frac{s_1! \cdots s_{n}!}{p_1! \cdots p_{n-1}! \, q_n! \cdots q_{\overline{n}}!}.
\]
In $g_s$, the coefficient of $t$ factors as $\mu_1 \mu_2$ where $\mu_1 = \lambda_{s''}$ and $s'' = p$. Since $\deg(s) - \deg(p)=\deg(q)$, we thus get
\[
\mu_1 = \left(\prod_{i=1}^{n-1} \binom{m_i - p_i - 1}{s_i - p_i} \right) \frac{s_1! \cdots s_{n}!}{p_1! \cdots p_{n-1}! \, \deg(q)!}.
\]
The factor $\mu_2$ is the coefficient of $q$ in the expansion of $(x_n + \cdots + x_{\overline{n}})^{\deg(q)}$, which by the multinomial theorem equals
\[
\mu_2 = \binom{\deg(q)}{q_n, \ldots, q_{\overline{n}}} = \frac{\deg(q)!}{q_n! \cdots q_{\overline{n}}!}.
\]
Thus, the coefficient of $t$ in $g_s$ is the product
\[
\mu_1 \mu_2 = \left(\prod_{i=1}^{n-1} \binom{m_i - p_i - 1}{s_i - p_i} \right) \frac{s_1! \cdots s_{n}!}{p_1! \cdots p_{n-1}! \, \deg(q)!} \cdot \frac{\deg(q)!}{q_n! \cdots q_{\overline{n}}!},
\]
which exactly matches the coefficient of $t$ in $\overline{g}_s$, as desired.
\end{proof}

\begin{remark}\label{rem:prime_factors_gb_coeffs}
The coefficients of the Gr\"obner basis polynomials $g_s \in G_{n,\mathbf{m},k}$ are rational numbers whose reduced numerators and denominators have prime factors only among those less than or equal to $\max\{m_1, \ldots, m_n\}$. In particular, this result resolves \cite[Conjecture 5.1]{kling2024grobnerbasesresolutionslefschetz}.
\end{remark}

In Theorem~\ref{thm:Reduced_GB}, we explicitly described the reduced Gr\"obner basis of $I_{n,\mathbf{m},k}$ with respect to the reverse lexicographic order. The following theorem shows that this same set forms the reduced Gr\"obner basis of $I_{n,\mathbf{m},k}$ for any term order on $R$ that respects the variable ordering $x_1 \succ \cdots \succ x_n$, thus providing the last part of the proof of Theorem \ref{thm:Main_Thm_Introduction}.

\begin{theorem}\label{thm:GB_determined_by_var_ranking}
   Let $\prec$ be any term order on $R$ satisfying $x_1 \succ \cdots \succ x_n$. Then the reduced Gr\"obner basis of $I_{n,\mathbf{m},k}$ with respect to $\prec$ coincides exactly with $G_{n,\mathbf{m},k}$ defined in \eqref{eq:Reduced_GB}. In particular, for any such term order, the initial ideal $\ini(I_{n,\mathrm{m},k})$ is minimally generated by
\begin{equation*}
        \{x_1^{m_1}, x_2^{m_2},\ldots,x_n^{m_n}\} \cup
        \mathrm{Crit}_{n,\mathbf{m},k}.
    \end{equation*}
where $x_j^{m_j}$ is removed if $k+\sum_{i=1}^{j-1}(m_i-1)<m_j$. 
\end{theorem}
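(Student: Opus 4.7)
The plan is to leverage Proposition~\ref{prop:GBPolys_Support_and_coeffs}, which gives an explicit description of the support of each $g_s\in G_{n,\mathbf{m},k}$ that is independent of the term order. Since $G_{n,\mathbf{m},k}$ is defined independently of $\prec$ and already lies in $I_{n,\mathbf{m},k}$, the task reduces to showing two things for an arbitrary admissible order $\prec$ with $x_1\succ\cdots\succ x_n$: first, that $\ini_\prec(g_s)=s$ for every $s\in \mathrm{Crit}_{n,\mathbf{m},k}$, and second, that these leading terms generate $\ini_\prec(I_{n,\mathbf{m},k})$. Reducedness will then follow quickly. The hard part is the first step, where one must verify a combinatorial separation property of the exponent vectors of the support of $g_s$ that forces $s$ to be maximal under any order compatible with the variable ranking.

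For this first step, fix $s\in \mathrm{Crit}_{n,\mathbf{m},k,j}$ with $\max(s)=j$ and apply Proposition~\ref{prop:GBPolys_Support_and_coeffs} with the proposition's $n$ and $\overline{n}$ replaced by $j$ and $n$, respectively. The tail monomials $t\in\mathrm{supp}(g_s)\setminus\{s\}$ then satisfy $\deg(t)=\deg(s)$ and $t_i\leq s_i$ for all $i<j$. Considering the exponent difference $v=s-t\in\mathbb{Z}^n$, one has $v_i\geq 0$ for $i<j$, $v_i\leq 0$ for $i>j$, and $\sum_i v_i=0$. A short case distinction on the sign of $v_j$ shows that if $v^+$ and $v^-$ denote the positive and negative parts of $v$, then $\max(\mathrm{supp}(v^+))<\min(\mathrm{supp}(v^-))$, both parts are nonempty, and $\deg(x^{v^+})=\deg(x^{v^-})$. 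Writing these as products of equally many variables and pairing off factors, every factor of $x^{v^+}$ is $\succ$ every factor of $x^{v^-}$, so iterated compatibility of $\prec$ with multiplication yields $x^{v^+}\succ x^{v^-}$. Since $s\cdot x^{v^-}=t\cdot x^{v^+}$, another application of compatibility forces $s\succ t$, and thus $\ini_\prec(g_s)=s$; the claim is trivial for the pure powers $x_i^{m_i}$.

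For the second step, Theorem~\ref{thm:Gen_sys_for_critical_path_ideal} shows that the leading terms of $G_{n,\mathbf{m},k}$ (with respect to any such $\prec$) form the generating set $S_{n,\mathbf{m},k}$ of $(M_{n,\mathbf{m},k})$, so $\ini_\prec(I_{n,\mathbf{m},k})\supseteq (M_{n,\mathbf{m},k})$. Because $I_{n,\mathbf{m},k}$ is homogeneous, it admits a homogeneous Gr\"obner basis in any admissible order, so $\ini_\prec(I_{n,\mathbf{m},k})$ is a homogeneous monomial ideal and $R/I_{n,\mathbf{m},k}$ and $R/\ini_\prec(I_{n,\mathbf{m},k})$ share the same Hilbert series. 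By Theorem~\ref{thm:Initial_Ideal_Equality} combined with Proposition~\ref{prop:Counting_Critical_Paths}, this common series equals $\HS(R/(M_{n,\mathbf{m},k});t)$; combined with the containment, this forces $\ini_\prec(I_{n,\mathbf{m},k})=(M_{n,\mathbf{m},k})$. Reducedness follows at once: each $g_s$ is monic by the coefficient computation in Setup~\ref{setup_for_main_theorem}, and Proposition~\ref{prop:GBPolys_Support_and_coeffs} guarantees that all tail monomials of $g_s$ avoid $(M_{n,\mathbf{m},k})=\ini_\prec(I_{n,\mathbf{m},k})$.
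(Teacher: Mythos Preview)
Your proof is correct and takes essentially the same approach as the paper's: both use Proposition~\ref{prop:GBPolys_Support_and_coeffs} to show that for every tail monomial $t$ of $g_s$ the exponent difference $s-t$ has its positive part supported on strictly smaller indices than its negative part, which forces $s\succ t$ under any term order with $x_1\succ\cdots\succ x_n$, and then conclude equality of initial ideals via a Hilbert series comparison. Your explicit case distinction on the sign of $v_j$ and the factor-by-factor pairing to deduce $x^{v^+}\succ x^{v^-}$ are a little more careful than the paper's one-line ``we have $a\prec b$'', but the substance is identical.
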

\begin{proof}
Let $g_s \in G_{n,\mathbf{m},k}$ be a polynomial in the reduced Gr\"obner basis of $I_{n,\mathbf{m},k}$ with respect to the degree reverse lexicographic order. Its leading monomial is $s$. By Proposition~\ref{prop:GBPolys_Support_and_coeffs}, any non-leading monomial $t$ in the support of $g_s$ satisfies that the Laurent monomial $t/s$, in its reduced form, can be written as $a/b$, where only variables with indices at least $\max(s)$ appear in the numerator $a$, and only variables with indices less than $\max(s)$ appear in the denominator $b$.
Since the term ordering $\prec$ respects the variable ordering $x_1 \succ \cdots \succ x_n$, we have $a \prec b$. Writing $c = \gcd(s,t)$, it follows that 
$t = (t/a) a = c a \prec c b = (s/b) b = s$. Hence, $t \prec s$, so $s$ remains the leading monomial of $g_s$ with respect to $\prec$.

Consequently, the initial ideal of $I_{n,\mathbf{m},k}$ with respect to the degree reverse lexicographic order is contained in the initial ideal with respect to $\prec$. Since their Hilbert functions coincide, these initial ideals must be equal. Applying Lemma~\ref{lem:Tail_terms_closed_form_GB_polys}, we conclude that $G_{n,\mathbf{m},k}$ is also the reduced Gr\"obner basis of $I_{n,\mathbf{m},k}$ with respect to $\prec$.
\end{proof}

\begin{example}\label{ex:Reduced_GB_(3,2,2,3)_2}
    Let $n=4$, $\mathbf{m}=(3,2,2,3)$, and $k=2$, as in Example~\ref{ex:Min_Gens_Lattice_Paths_(3,2,2,3)_2}. With respect to any monomial ordering $\prec$ with $x_1\succ x_2\succ x_3\succ x_4$, we have the reduced Gr\"{o}bner basis
    \begin{align*}
        G_{n,\mathbf{m},k}
        =
        \big\{
        & x_3^{2},\; x_2^{2},\; x_4^{3},\; x_1^{2}+2 x_1 x_2+2 x_1 x_3+2 x_2 x_3+2 x_1 x_4+2 x_2 x_4+2 x_3 x_4+x_4^{2},\\
        & x_1 x_2 x_3 + x_1 x_2 x_4 + x_1 x_3 x_4 + 2 x_2 x_3 x_4 + \frac{1}{2}x_1 x_4^{2} +  x_2 x_4^{2} + x_3 x_4^{2},\\
        & x_2 x_3 x_4^{2},\; x_1 x_3 x_4^{2},\; x_1 x_2 x_4^{2}
        \big\}.
    \end{align*}
\end{example}

\begin{remark}\label{rem:Almost_CI_is_CI}
Note that the ideals of the form $I_{n,\mathbf{m},k}$ for which we can determine their Gr\"obner basis not only contains almost complete intersections, but also any complete intersection of the form
\[
\overline{I}_{n,\mathbf{m},k} = (x_1^{m_1},\dots, x_{n-1}^{m_{n-1}}, (x_1+\cdots + x_n)^k).
\]
Indeed, the socle degree of $R/\overline{I}_{n,\mathbf{m},k}$ is $\sum_{i=1}^{n-1}(m_i-1) + (k-1)$, so if $m_n$ is larger than that, then $x_n^{m_n}\in \overline{I}_{n,\mathbf{m},k}$ and  $I_{n,\mathbf{m},k}=\overline{I}_{n,\mathbf{m},k}$.
\end{remark}

\begin{corollary}\label{cor:At_most_n_factorial_GBs}
    The ideal $I_{n,\mathbf{m},k}$ has at most $n!$ distinct reduced Gr\"{o}bner bases.
\end{corollary}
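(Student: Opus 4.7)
The plan is to reduce the statement to Theorem~\ref{thm:GB_determined_by_var_ranking} via a symmetry argument. Any admissible term order $\prec$ on $R$ induces a strict total order on the variables, hence a unique permutation $\sigma \in S_n$ such that $x_{\sigma(1)} \succ x_{\sigma(2)} \succ \cdots \succ x_{\sigma(n)}$. The claim will follow once I show that the reduced Gr\"obner basis of $I_{n,\mathbf{m},k}$ with respect to $\prec$ depends only on $\sigma$, since there are only $n!$ permutations to consider.

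To do this, I would pull back everything along the $\mathbf{k}$-algebra automorphism $\tau\colon R\to R$ defined by $\tau(x_i) = x_{\sigma(i)}$. Under $\tau$, the order $\prec$ corresponds to the order $\prec^{\tau}$ on $R$ with $s\prec^{\tau} t \iff \tau(s)\prec \tau(t)$, which has variable ranking $x_1 \succ \cdots \succ x_n$. Moreover, applying $\tau^{-1}$ to the generators of $I_{n,\mathbf{m},k}$ yields the ideal
\[
\tau^{-1}(I_{n,\mathbf{m},k}) \;=\; \bigl(x_1^{m_{\sigma(1)}},\ldots,x_n^{m_{\sigma(n)}},(x_1+\cdots+x_n)^k\bigr) \;=\; I_{n,\sigma^{*}\mathbf{m},k},
\]
where $\sigma^{*}\mathbf{m} = (m_{\sigma(1)},\ldots,m_{\sigma(n)})$; here I use that $(x_1+\cdots+x_n)^k$ is fixed by $\tau$.

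By Theorem~\ref{thm:GB_determined_by_var_ranking}, the reduced Gr\"obner basis of $I_{n,\sigma^{*}\mathbf{m},k}$ with respect to \emph{any} term order whose variable ranking is $x_1\succ\cdots\succ x_n$, in particular $\prec^{\tau}$, is exactly $G_{n,\sigma^{*}\mathbf{m},k}$, and this set depends only on the data $(n,\sigma^{*}\mathbf{m},k)$. Since $\tau$ is a ring isomorphism that sends the term order $\prec^{\tau}$ to $\prec$ monomial-by-monomial, it carries reduced Gr\"obner bases to reduced Gr\"obner bases. Therefore the reduced Gr\"obner basis of $I_{n,\mathbf{m},k}$ with respect to $\prec$ equals $\tau\bigl(G_{n,\sigma^{*}\mathbf{m},k}\bigr)$, which depends on $\prec$ only through the permutation $\sigma$. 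Hence the total number of distinct reduced Gr\"obner bases is at most $|S_n| = n!$.

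No real obstacle is anticipated: the content of the corollary is entirely carried by Theorem~\ref{thm:GB_determined_by_var_ranking}, and the only subtlety is to track that relabeling the variables simultaneously permutes the exponent vector $\mathbf{m}$, so that the relabeled ideal is again of the form $I_{n,\mathbf{m}',k}$ to which the theorem applies.
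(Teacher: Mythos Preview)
Your proof is correct and follows the same approach as the paper: both deduce the bound directly from Theorem~\ref{thm:GB_determined_by_var_ranking} by observing that the reduced Gr\"obner basis depends only on the variable ranking induced by the term order. Your version is more explicit about the relabeling automorphism and the accompanying permutation of $\mathbf{m}$, which the paper's two-line argument leaves implicit.
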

\begin{proof}
Every term ordering induces a total ordering on the variables $x_1, \ldots, x_n$. Since the reduced Gr\"obner basis $G_{n,\mathbf{m},k}$ depends only on the variable ordering (by Theorem~\ref{thm:GB_determined_by_var_ranking}), the number of distinct reduced Gr\"obner bases is bounded above by $n!$.
\end{proof}

In the special case where the ideal $P_{n,\mathbf{m}}$ is equigenerated in degree $m \geq 3$, that is, when $\mathbf{m} = (m, \ldots, m)$, we can precisely determine the number of distinct reduced Gr\"obner bases of $I_{n,\mathbf{m},k}$. For the case $m=2$, this number is given by the multinomial coefficient $\binom{n}{k, 2, \ldots, 2, 1}$, where the final entry $1$ appears if and only if $n - k \equiv 1 \pmod{2}$; see \cite[Proposition 2.29]{kling2024grobnerbasesresolutionslefschetz}.

\begin{proposition}\label{prop:Cardinality_GFan_Equigenerated}
Suppose $\mathbf{m} = (m, \ldots, m)$ with $m \geq 3$. Then the ideal $I_{m,\mathbf{m},k}$ has exactly
$$(n!)/(\min\{\lfloor k/(m-1) \rfloor,n\}!)$$ 
distinct reduced Gr\"{o}bner bases.
\end{proposition}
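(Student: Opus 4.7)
The plan is to reduce the count to a stabilizer calculation and then pin down the stabilizer as the subgroup $S_r$ of $S_n$ permuting only the first $r$ variables.

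Because $\mathbf{m} = (m,\ldots,m)$, the ideal $I_{n,\mathbf{m},k}$ is $S_n$-invariant, so for every $\sigma \in S_n$ the reduced Gr\"obner basis with respect to the reverse-lex order induced by $x_{\sigma(1)} \succ \cdots \succ x_{\sigma(n)}$ equals $\sigma(G_{n,\mathbf{m},k})$. By Theorem~\ref{thm:GB_determined_by_var_ranking} the reduced basis depends only on the underlying variable ordering, so two permutations produce the same basis iff their quotient lies in the setwise stabilizer $\mathrm{Stab}(G_{n,\mathbf{m},k}) = \mathrm{Stab}((M_{n,\mathbf{m},k}))$ (the latter equality holds because leading monomials determine each reduced basis element). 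Hence the number of distinct reduced bases equals $n!/|\mathrm{Stab}((M_{n,\mathbf{m},k}))|$, and it suffices to prove that this stabilizer equals $S_r$.

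For the easy direction $S_r \subseteq \mathrm{Stab}$, I would inspect the inequalities in Definition~\ref{def:Sets_Crit_n_m_k_j} to see that $\overline{\mathrm{Crit}}_{n,\mathbf{m},k,\ell} = \emptyset$ for $\ell < r$, while $\overline{\mathrm{Crit}}_{n,\mathbf{m},k,r}$ is either empty (when $k > r(m-1)$) or forces the truncation $s_{\leq r} = (x_1 \cdots x_r)^{m-1}$ (when $k = r(m-1)$). Both cases are manifestly $S_r$-invariant. Combining this with Theorem~\ref{thm:Gen_sys_for_critical_path_ideal} and the observation that the degree constraint defining $\mathrm{Crit}_{n,\mathbf{m},k,j}$ for $j > r$ depends on $s_{\leq r}$ only through its total degree (which $S_r$ preserves), it is routine to check that every element of $S_r$ maps the minimal generating set of $(M_{n,\mathbf{m},k})$ into itself.

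The harder direction is $\mathrm{Stab} \subseteq S_r$. Here I would produce, in each relevant degree, a distinguished critical monomial whose structure pins down the action of $\sigma$. In degree $k$ itself, the only critical minimal generator is $T = (x_1 \cdots x_r)^{m-1} x_{r+1}^{k - r(m-1)}$ (with $x_{r+1}^{0}$ interpreted as $1$ when $k = r(m-1)$): any saturated critical monomial $(x_1 \cdots x_{j-1})^{m-1} x_j^{k-(j-1)(m-1)}$ of degree $k$ must satisfy $1 \leq k - (j-1)(m-1) \leq m-1$, which collapses $j$ to a single value in the equigenerated setting. So any $\sigma \in \mathrm{Stab}$ must fix $T$, which in the case $k > r(m-1)$ immediately yields $\sigma(r+1) = r+1$ and $\sigma(\{1,\ldots,r\}) = \{1,\ldots,r\}$ setwise, and in the case $k = r(m-1)$ at least yields setwise stability of $\{1,\ldots,r\}$. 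I would then iterate with critical monomials of degrees $k+1, k+2, \ldots$: at each step, a distinguished-exponent analysis based on the degree identity $s_j = 2\deg(s) - k - (j-1)(m-1)$ produces an obstruction forcing $\sigma$ to fix successive indices $r+1, r+2, \ldots, n$ pointwise.

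The main obstacle is the iteration: for a given $i \in \{r+1,\ldots,n\}$, I must exhibit a subset of critical monomials whose common structure at index $i$, together with the degree identity, forces $\sigma(i) = i$; the parity and divisibility constraints on $s_i$ are the main source of bookkeeping, especially for larger $m$ where multiple critical monomials populate each degree. Once the inductive template is set up, however, combining this with the reduction step yields $|\mathrm{Stab}((M_{n,\mathbf{m},k}))| = r!$ and hence the stated count $n!/r!$.
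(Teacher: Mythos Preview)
Your orbit--stabilizer reformulation is essentially the paper's argument: the upper bound (at most $n!/\mu!$ bases, with $\mu=r$ in your notation) is precisely your inclusion $S_r\subseteq\mathrm{Stab}((M_{n,\mathbf{m},k}))$, and the lower bound is precisely the reverse inclusion. Your easy direction and the uniqueness of the degree-$k$ critical generator $T$ are correct as stated.

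Where the paper goes further is in making the ``iteration'' concrete. Rather than working degree by degree, it proves inductively that from any minimal generator $s$ with $\max(s)=\nu$ one obtains an explicit minimal generator with $\max=\nu+1$, namely $\bar s=(s/x_\nu)\,x_{\nu+1}^{|e_{\nu+1}|}$ where $e_{\nu+1}=2\deg(s/x_\nu)-k-\nu(m-1)$ (with a variant when $s_\nu=1$). This step-up construction is exactly the ``main obstacle'' you flag, and it does not go through uniformly: when $k=1$ one has $r=0$, and for $m\in\{3,4\}$ the set $\mathrm{Crit}_{n,m,1,2}$ is empty, so the paper treats the second- and third-ranked variables by a separate argument, using that $x_2^{m-1}x_3$ is a minimal generator of the initial ideal while $x_2x_3^{m-1}$ is not. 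Your degree-by-degree scheme hits the same snag (for $m=3$, $k=1$ there is no $\mathbf{m}$-free critical minimal generator in degree~$2$ at all), so you would need the analogous patch there as well.
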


\begin{proof}
Set $\mu = \lfloor k/(m-1) \rfloor$. We first show that $I_{n,\mathbf{m},k}$ has at most $(n!)/(\min\{\mu,n\}!)$ distinct reduced Gr\"{o}bner bases. Recall that the socle degree of the algebra $R/P_{j,\mathbf{m}_j}$ is $ D_j = j \cdot (m-1)$.

Consider first the case $\mu \geq n$, i.e., $\min\{\mu,n\} = n$. Then $k \geq n(m-1) = D_n$, so either $I_{n,\mathbf{m},k} = P_{n,\mathbf{m}}$ or the $\mathbf{m}$-free part of $I_{n,\mathbf{m},k}$ is generated in the socle degree of the Gorenstein algebra $A_{n,\mathbf{m}}$. In either situation, it is clear that there is only one reduced Gr\"obner basis.

Now consider the case $\mu < n$, i.e., $\min\{\mu,n\} = \mu$. Since $k \geq \mu (m-1) = D_\mu$, the ideal $\ini(I_{n,\mathbf{m},k}) \cap \mathbf{k}[x_1,\ldots,x_\mu]$ contains at most one $\mathbf{m}$-free monomial, $s_{\mu}=x_1^{m-1}\cdots x_\mu^{m-1}$. In turn, $\overline{\mathrm{Crit}}_{m,\mathbf{m},k,j}$ contains no monomials of degree lower than $\deg(s_\mu)$, for $1 \leq j \leq \mu$. This implies, for any monomial ordering $\prec$ that ranks the variables $x_1,\ldots,x_\mu$ higher than the remaining variables, that $\ini(I_{n,\mathbf{m},k})$ only depends on the ranking of $x_{\mu + 1},\ldots,x_n$. As a consequence, the reduced Gr\"{o}bner basis is invariant under permutation of the first $\mu$ variables, and the number of distinct reduced Gr\"{o}bner bases is thus at most $(n!)/(\mu!)$.

Now we show that $I_{n,\mathbf{m},k}$ has at least $(n!)/(\min\{\mu,n\}!)$ distinct reduced Gr\"{o}bner bases. The case $\min\{\mu,n\}=n$ is easy, as there always exists at least one reduced Gr\"{o}bner basis. In the case $\min\{\mu,n\}=\mu$, recall from the previous paragraph that $\overline{\mathrm{Crit}}_{m,\mathbf{m},k,j}$ contains no monomials of degree lower than $\deg(s_\mu)$, for $1 \leq j \leq \mu$. Now consider two monomial orderings $\prec_a$ and $\prec_b$ that rank the variables $x_1,\ldots,x_\mu$ higher than the rest, and such that their induced variable rankings differ but coincide up to a certain variable $x_\ell$, modulo a permutation of the first $\mu$ variables. Without loss of generality, the rankings are $x_1 \succ_a \cdots \succ_a x_\mu \succ_a x_{\mu +1} \succ_a \cdots \succ_a x_\ell \succ_a x_a \succ_a \cdots$ and $x_1 \succ_b \cdots \succ_b x_\mu \succ_b x_{\mu +1} \succ_b \cdots \succ_b x_\ell \succ_b x_b \succ_b \cdots$, with $x_a \neq x_b$. Since $k < (\mu+1)\cdot(m-1)$, for both monomial orderings there must be an $\mathbf{m}$-free minimal generator of $\ini(I_{n,\mathbf{m},k})$ that involves only the first $\mu+1$ variables. We will use this fact to show that $\ini_{\prec_a}(I_{n,\mathbf{m},k})$ contains a minimal generator with lowest-ranked variable $x_a$, and similarly for $\ini_{\prec_b}(I_{n,\mathbf{m},k})$ with $x_b$. An inspection of the involved lattice paths (with respect to enumerations adapted to the individual variable rankings) then shows that $\ini_{\prec_a}(I_{n,\mathbf{m},k}) \neq \ini_{\prec_b}(I_{n,\mathbf{m},k})$.

We remark that in the special case $k=1$, we have $\mu=0$. In this case, if $\prec_a$ and $\prec_b$ differ already in their highest, second highest, or third highest ranked variable, then $\ini_{\prec_a}(I_{n,\mathbf{m},k}) \neq \ini_{\prec_b}(I_{n,\mathbf{m},k})$ by the following argument. If they differ already at the highest ranked variable, then the only degree one generator in a reduced Gr\"{o}bner basis must have the highest ranked variable as leading monomial, exhibiting their difference. If they differ at the second or third highest ranked variable, then an application of Lemma~\ref{lem:exponent_of_last_variable} gives that the monomial $x_c^{m-1}x_d$, where $x_c$ and $x_d$ are the second and third highest ranked variables respectively, is a minimal generator of the initial ideal but $x_cx_d^{m-1}$ is not, proving their difference.

Thus it suffices to show that, for a given variable enumeration, the existence of a minimal generator $s$ of the initial ideal with $\max(s)=\nu$ implies the existence of a minimal generator $\overline{s}$ of the same initial ideal with $\max(\overline{s})=\nu+1$, where if $k=1$, we may assume $\nu\geq 3$. To this end, write $d=\deg(s)$. If $s_\nu>1$, then by Lemma~\ref{lem:exponent_of_last_variable} we have 
\begin{equation}\label{eq:GFanProof_01}
    s_\nu=2d-k-(\nu-1)(m-1)\in \{2,\ldots,m-1\}.
\end{equation}
Now consider the monomial $t=(s/x_\nu)$. Using \eqref{eq:GFanProof_01}, we have
\begin{equation*}\label{eq:GFanProof_02}
        e_{\nu+1}:=2\deg(t)-k-(\nu)(m-1)
        =\ 2d-2-k-(\nu-1)(m-1)-(m-1)     \in\ \{ -m+1,\ldots,-2 \}.
\end{equation*}
Thus, $\bar{s}=t\cdot x_{\nu+1}^{|e_{\nu+1}|}$ is an $\mathbf{m}$-free monomial which is a minimal generator of the initial ideal, as
\begin{equation*}\label{eq:GFanProof_03}
        2\deg(\bar{s})-k-(\nu)(m-1)
        =\ 2\deg(t)+2|e_{\nu+1}|-k-(\nu)(m-1)
        =\ e_{\nu+1}+2|e_{\nu+1}|
        =\ |e_{\nu+1}|.
\end{equation*}
Finally, consider the case $s_\nu=1$. If $\nu=1$, this forces $k=1$, which is a case we have already handled. So assume $\nu>1$. Then $s_{\nu-1}>0$, as otherwise $s$ could not be a minimal generator. Indeed, otherwise consider the third-to-last node in the associated path. It must lie at least $1/2$ above the red line. Thus the end point of the whole path (which ends on a flat edge) is at least $(1/2)+1+(m-3)$ above the red line. The reflection of the last edge thus has slope at most $-(1+2+2m-6)=-2m+3$, which is not admissible. By a similar computation as above, we obtain a minimal generator $(s/x_{\nu-1})\cdot x_\nu x_{\nu+1}^{m-2}$ involving one additional variable. This completes the proof.
\end{proof}

\begin{remark}\label{rem:Cardinality_GFan_MixedCase}
   A result analogous to Proposition~\ref{prop:Cardinality_GFan_Equigenerated} does not hold when $P_{n,\mathbf{m}}$ is not equigenerated. For example, a Macaulay2 computation using the \texttt{gfaninterface} package shows that $I_{3,(2,3,4),2}$ has $5$ distinct reduced Gr\"{o}bner bases (as opposed to a divisor of $6 = 3!$, as established in Proposition~\ref{prop:Cardinality_GFan_Equigenerated} for the equigenerated case).
\end{remark}

\subsection{Enumerative properties of the Gr\"obner basis}
One interesting property of our Gröbner basis is that elements of a specified degree are only added at a unique number of variables. More precisely, if $\left\{ s \in \mathrm{Crit}_{n_1,\mathbf{m},k} \,\middle|\, \deg(s) = d \right\}$ and $\left\{ s \in \mathrm{Crit}_{n_2,\mathbf{m},k} \,\middle|\, \deg(s) = d \right\}$ are both non-empty, then 
$$ \left\{ s \in \mathrm{Crit}_{n_1,\mathbf{m},k} \,\middle|\, \deg(s) = d \right\}  =  \left\{ s \in \mathrm{Crit}_{n_2,\mathbf{m},k} \,\middle|\, \deg(s) = d \right\}, $$
when seen as elements of the same ring, see Proposition \ref{prop:GB_degree_element_count_sequences_general_case}. This leads us to study the properties of the sequence of non-negative integers $(g_{\mathbf{m},k}(d))_{d\geq k}$ defined by
\begin{equation}\label{eq:GB_degree_element_count_sequences}
    g_{\mathbf{m},k}(d) = \left| \left\{ s \in \mathrm{Crit}_{n,\mathbf{m},k} \,\middle|\, n \gg 0,\, \deg(s) = d \right\} \right|.
\end{equation}
As we will see later in Section \ref{sec:Applications}, several famous combinatorial sequences arise in this way such as the (generalized) Catalan, Motzkin and Riordan numbers.  Our first goal is to show that this sequence is well-defined, that is, for any $d \geq k$ and any sufficiently large $n_1,n_2$, we have
\[
\left| \left\{ s \in \mathrm{Crit}_{n_1,\mathbf{m},k} \,\middle|\, \deg(s) = d \right\} \right| = \left| \left\{ s \in \mathrm{Crit}_{n_2,\mathbf{m},k} \,\middle|\, \deg(s) = d \right\} \right|.
\]
In such comparisons, we always assume compatibility of the exponent vectors of pure variable powers. Specifically, when considering parameter values $(n_1,\mathbf{m},k)$ and $(n_2,\mathbf{m},k)$ with $n_1 < n_2$, we require that the vectors $\mathbf{m}$ agree on their first $n_1$ entries. Equivalently, we may regard $\mathbf{m} = (m_1, m_2, \ldots) \in \mathbb{Z}_{\geq 2}^{\mathbb{N}}$ as an infinite exponent sequence and consider its suitable finite truncations. 

\begin{definition}\label{def:Type_1_Type_2}
    Fix an exponent sequence $\mathbf{m}\in\mathbb{Z}_{\geq 2}^{\mathbb{N}}$ and a positive integer $k$. Given these parameters, for positive integers $n$ we define $\sigma_n=\max\{ m_1,\ldots, m_n \}$ and
    \[
    \tau_n=\left(\sum_{i=1}^n (m_i-1)\right) - (\sigma_n-1).
    \]
    We also set $\sigma_0=\tau_0=0$. Moreover, we say that $n$ is of \emph{Type 1} if $k\geq \sigma_{n-1}-\tau_{n-1}-1$ and that it is of \emph{Type 2} otherwise.
\end{definition}

Note that if $n\geq 3$ and $\mathbf{m}=(m,m,\dots)$ is a constant vector, then $n$ is of Type 1.

\begin{lemma}\label{lem:Non-empty_sets_Crit_n_and_their_degree_structure}
Let the parameters $\mathbf{m}, k$ be given, and for $j\geq 1$ write $R_j=\mathbf{k}[x_1,\ldots,x_j]$. Moreover, introduce the notation $D_j$ for the socle degree of $R_j/P_{j,\mathbf{m}}$ and $\delta_j$ for the socle degree of $R_j/I_{j,\mathbf{m},k}$. We extend this notation for $j=0$ via $D_0=\delta_0=0$. Further, let $s$ be an element of $\mathrm{Crit}_{n,\mathbf{m},k,n}$ with minimal possible $x_n$-degree and set $\deg(s)=d$. Then, for 
$n\geq 1$, we have
    \begin{enumerate}
        \item[{\rm (i)}] $\mathrm{Crit}_{n,\mathbf{m},k,n} \neq \emptyset$ only if $k+D_{n-1}-2\delta_{n-1}<m_n$.
        \item[{\rm (ii)}]  $s_n=k+D_{n-1}-2\delta_{n-1}$ and  $d=k+D_{n-1}-\delta_{n-1}$.
        \item[{\rm (iii)}] For every non-negative integer 
        \[
            e \leq \min\left\{ \frac{m_n - 1 - s_n}{2},\; \delta_{n-1} \right\},
        \]
        there exists an element $u \in \mathrm{Crit}_{n,\mathbf{m},k,n}$ of total degree $d + e$ such that $u_n = s_n + 2e$. The number of such elements in $\mathrm{Crit}_{n,\mathbf{m},k,n}$ is equal to $\HF(R_{n-1}/I_{n-1,\mathbf{m},k},\delta_{n-1} - e)$. \\
        Moreover, all elements of $\mathrm{Crit}_{n,\mathbf{m},k,n}$ have degree and $x_n$-degree of this form.
    \end{enumerate}
\end{lemma}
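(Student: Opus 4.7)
The plan is to reformulate membership in $\mathrm{Crit}_{n,\mathbf{m},k,n}$ as a condition on monomials in the lower polynomial ring $R_{n-1}$, and then read off all three claims from this reformulation. Using Theorem~\ref{thm:Initial_Ideal_Equality} together with Theorem~\ref{thm:Gen_sys_for_critical_path_ideal}, the set $\mathrm{Crit}_{n,\mathbf{m},k,n}$ is precisely the collection of $\mathbf{m}$-free minimal generators of $\ini(I_{n,\mathbf{m},k})$ whose largest occurring variable is $x_n$. Writing such a generator as $u = u' x_n^{u_n}$ with $u' \in R_{n-1}$ and $u_n \geq 1$, Lemma~\ref{lem:exponent_of_last_variable} gives the degree relation $u_n = 2\deg(u) - k - D_{n-1}$, equivalently $u_n = k + D_{n-1} - 2\deg(u')$. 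Minimality of $u$, combined with the inclusion $\ini(I_{n-1,\mathbf{m},k}) \subseteq \ini(I_{n,\mathbf{m},k})$ that follows from Lemma~\ref{lem:critical_path_ideal_inclusion}, forces $u' \notin \ini(I_{n-1,\mathbf{m},k})$; and conversely any $\mathbf{m}$-free monomial $u'$ outside $\ini(I_{n-1,\mathbf{m},k})$ for which $1 \leq k + D_{n-1} - 2\deg(u') \leq m_n - 1$ assembles into such a generator via $u = u' x_n^{k + D_{n-1} - 2\deg(u')}$. This yields a bijection between $\mathrm{Crit}_{n,\mathbf{m},k,n}$ and the $\mathbf{m}$-free monomials $u' \in R_{n-1}$ lying outside $\ini(I_{n-1,\mathbf{m},k})$ whose degrees satisfy the corresponding numerical bounds.

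With the dictionary in hand, claims (i) and (ii) reduce to a short optimization. Since $u_n$ is strictly decreasing in $\deg(u')$, the minimal $x_n$-degree among elements of $\mathrm{Crit}_{n,\mathbf{m},k,n}$ is attained by maximizing $\deg(u')$, and the largest degree of an $\mathbf{m}$-free monomial outside $\ini(I_{n-1,\mathbf{m},k})$ is by definition the socle degree $\delta_{n-1}$ of $R_{n-1}/I_{n-1,\mathbf{m},k}$. Substituting $\deg(u') = \delta_{n-1}$ gives $s_n = k + D_{n-1} - 2\delta_{n-1}$ and $d = \delta_{n-1} + s_n = k + D_{n-1} - \delta_{n-1}$, establishing (ii); claim (i) is then the $\mathbf{m}$-freeness requirement $s_n \leq m_n - 1$, which must hold whenever $\mathrm{Crit}_{n,\mathbf{m},k,n}$ is non-empty.

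For (iii), an element $u \in \mathrm{Crit}_{n,\mathbf{m},k,n}$ of total degree $d + e$ has $u_n = s_n + 2e$ and $\deg(u_{\leq n-1}) = \delta_{n-1} - e$, and conversely each $\mathbf{m}$-free monomial $u' \in R_{n-1}$ of degree $\delta_{n-1} - e$ outside $\ini(I_{n-1,\mathbf{m},k})$ assembles into a unique such $u = u' x_n^{s_n + 2e}$. The number of such $u'$ is $\HF(R_{n-1}/\ini(I_{n-1,\mathbf{m},k}), \delta_{n-1} - e) = \HF(R_{n-1}/I_{n-1,\mathbf{m},k}, \delta_{n-1} - e)$ by Remark~\ref{rem:Hilbert_series_via_initial_ideals_prelims}, and the bounds $0 \leq e \leq \min\{(m_n - 1 - s_n)/2, \delta_{n-1}\}$ enforce $\mathbf{m}$-freeness of $u$ and non-negativity of $\deg(u')$. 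The main technical obstacle is the minimality verification in the $\Leftarrow$ direction of the dictionary: one must exclude any proper divisor of $u = u' x_n^{u_n}$ from lying in $(M_{n,\mathbf{m},k})$, which requires noting that every other $\mathbf{m}$-free minimal generator $w = w' x_n^{w_n}$ with $\max(w) = n$ obeys the same linear relation $w_n = k + D_{n-1} - 2\deg(w')$, so $w_n > u_n$ whenever $\deg(w') < \deg(u')$, preventing $w$ from dividing any proper divisor of $u$; divisors with $\max < n$ are excluded by $u' \notin \ini(I_{n-1,\mathbf{m},k})$.
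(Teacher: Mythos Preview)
Your approach is essentially the same as the paper's: establish a correspondence between $\mathrm{Crit}_{n,\mathbf{m},k,n}$ and monomials in $R_{n-1}$ outside $\ini(I_{n-1,\mathbf{m},k})$, then read off the degree structure. However, there is a genuine gap in your argument for (ii).

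You write that the minimal $x_n$-degree is attained by maximizing $\deg(u')$, and that the largest possible $\deg(u')$ equals the socle degree $\delta_{n-1}$. Both statements are true individually, but your bijection imposes the constraint $u_n = k + D_{n-1} - 2\deg(u') \geq 1$, i.e.\ $\deg(u') \leq \lfloor (k+D_{n-1}-1)/2 \rfloor$. So the maximum of $\deg(u')$ over the domain of the bijection is $\min\bigl(\delta_{n-1},\,\lfloor (k+D_{n-1}-1)/2 \rfloor\bigr)$, not automatically $\delta_{n-1}$. For your substitution to be valid you need $\delta_{n-1} \leq \lfloor (k+D_{n-1}-1)/2 \rfloor$, equivalently $k + D_{n-1} - 2\delta_{n-1} > 0$, and you never verify this. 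Without it, the formula $s_n = k + D_{n-1} - 2\delta_{n-1}$ could be non-positive, and the element you claim to construct would not lie in the set.

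The paper handles this as its very first step, invoking an explicit formula for $\delta_{n-1}$ from an external reference to check the inequality case by case. You can also recover it from results already available: the proof of Proposition~\ref{prop:Counting_Critical_Paths} shows that every $\mathbf{m}$-admissible path of degree $d$ is critical once $2d \geq D_{n-1} + k$, so the Hilbert function of $R_{n-1}/(M_{n-1,\mathbf{m},k})$ vanishes there; combined with Theorem~\ref{thm:Initial_Ideal_Equality} this forces $\delta_{n-1} < (D_{n-1}+k)/2$. Either route closes the gap, but some such argument is required.
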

\begin{proof}
    To prove (i) and (ii), we first note that $k+D_{n-1}-2\delta_{n-1}>0$. Indeed, using the notation of Definition~\ref{def:Type_1_Type_2} and \cite[Theorem 1]{RRR} we see that
    \begin{equation}\label{eq:Socle_degrees_initial_ideals_cases}
        \delta_{n-1}=
        \begin{cases}
            \lfloor\frac{D_{n-1}+k-1}{2}\rfloor,& k\geq \sigma_{n-1}-\tau_{n-1}-1 \\
            \tau_{n-1}+k-1,& k< \sigma_{n-1}-\tau_{n-1}-1.
        \end{cases}
    \end{equation}
    In the first case, the relation $k+D_{n-1}-2\delta_{n-1}>0$ is obvious. In the second case, we have $\sigma_{n-1}-\tau_{n-1}>k+1$, and hence $k+D_{n-1}-2\delta_{n-1}=k+\tau_{n-1}+\sigma_{n-1}-1-2\tau_{n-1}-2k+2=\sigma_{n-1}-\tau_{n-1}-k+1>\sigma_{n-1}-\tau_{n-1}-\sigma_{n-1}+\tau_{n-1}+2=2>0$.

    By Lemma~\ref{lem:exponent_of_last_variable}, a monomial $s\in \mathrm{Crit}_{n,\mathbf{m},k,n}$ must satisfy $s_n=2d-k-D_{n-1}$. 
    Furthermore, we must have $s_{\leq n-1}\notin \mathrm{in}(I_{n-1,\mathbf{m},k})$. Note how increasing $s_n$ forces $\deg(s_{\leq n-1})$ to decrease. Thus the minimal possible $x_n$-degree for such a monomial $s$ is achieved when $\deg(s_{\leq n-1})=\delta_{n-1}$. In this case, $s_n=2d-k-D_{n-1}=2\delta_{n-1}+2s_n-k-D_{n-1}$, and hence $s_n=k+D_{n-1}-2\delta_{n-1}>0$. A suitable $\mathbf{m}$-free monomial with this degree structure exists only if $k+D_{n-1}-2\delta_{n-1}<m_n$. Its total degree is then $d=s_n+\delta_{n-1}=k+D_{n-1}-\delta_{n-1}$. Thus, (i) and (ii) hold.
    
    To prove (iii), first note that from the definition of $\mathrm{Crit}_{n,\mathbf{m},k,n}$, it follows that if two elements in the set differ in total degree by $\epsilon$, then their $x_n$-degrees differ by $2\epsilon$. Furthermore, every monomial in $\mathbf{k}[x_1,\ldots,x_{n-1}]$ not contained in $\ini(I_{n-1,\mathbf{m},k})$ can be extended to an element of $\mathrm{Crit}_{n,\mathbf{m},k,n}$ by multiplying with a suitable power of $x_n$ of degree at most $m_n-1$.
    Conversely, every element of $\mathrm{Crit}_{n,\mathbf{m},k,n}$ must be of this form. This correspondence implies the claimed degree pattern and counting formula, completing the proof.
\end{proof}

\begin{corollary}\label{cor:Crit_min_xn_degree_and_its_total_degree_type_1}
    Let the parameters $\mathbf{m}$ and $k$ be such that each $n > 1$ is of type 1. Consider an $n$ with $\mathrm{Crit}_{n,\mathbf{m},k,n} \neq \emptyset$. Let $s$ be an element of $\mathrm{Crit}_{n,\mathbf{m},k,n}$ with minimal possible $x_n$-degree. Then we have $s_n \in \{1,2\}$.
\end{corollary}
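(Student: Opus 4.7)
The plan is to compute $s_n$ directly using the formula from Lemma~\ref{lem:Non-empty_sets_Crit_n_and_their_degree_structure}(ii), namely $s_n = k + D_{n-1} - 2\delta_{n-1}$, and then substitute the Type 1 expression for $\delta_{n-1}$ obtained from \eqref{eq:Socle_degrees_initial_ideals_cases}.

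First I would record that since $n$ is of Type 1 (and the assumption of the corollary is that this holds for every $n > 1$, so in particular for the $n-1$ appearing in the formula, when $n-1 \geq 1$; the edge case $n=2$ can be checked directly from $\delta_0 = D_0 = 0$, giving $s_2 = k$, which combined with the existence condition $k = k + D_1 - 2\delta_1 < m_2$ from Lemma~\ref{lem:Non-empty_sets_Crit_n_and_their_degree_structure}(i) together with $\mathrm{Crit}_{2,\mathbf{m},k,2} \neq \emptyset$ forces $k \in \{1,2\}$ via the Type 1 condition $k \geq \sigma_1 - \tau_1 - 1 = m_1 - 1$, once one notes that the corollary is only asserting the conclusion when the set is non-empty). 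For the generic case $n \geq 3$, the Type 1 condition gives
\begin{equation*}
\delta_{n-1} = \left\lfloor \frac{D_{n-1} + k - 1}{2} \right\rfloor.
\end{equation*}

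Then I would split into the two parity cases for $D_{n-1} + k$. If $D_{n-1} + k$ is odd, then $D_{n-1} + k - 1$ is even and $\delta_{n-1} = (D_{n-1} + k - 1)/2$, whence
\begin{equation*}
s_n = k + D_{n-1} - 2 \cdot \frac{D_{n-1} + k - 1}{2} = 1.
\end{equation*}
If $D_{n-1} + k$ is even, then $\delta_{n-1} = (D_{n-1} + k - 2)/2$, whence
\begin{equation*}
s_n = k + D_{n-1} - 2 \cdot \frac{D_{n-1} + k - 2}{2} = 2.
\end{equation*}
In either case $s_n \in \{1,2\}$, which is exactly what the corollary claims.

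The argument is essentially a one-line parity computation once the right inputs are assembled, so there is no real obstacle; the only subtlety is keeping track of the case $n = 2$, where the Type 1 hypothesis is vacuously used on $n-1 = 1$ (for which $D_0 = \delta_0 = 0$ is our convention), and verifying that the minimal existence condition $k + D_{n-1} - 2\delta_{n-1} > 0$ established in the proof of Lemma~\ref{lem:Non-empty_sets_Crit_n_and_their_degree_structure} is consistent with the values $\{1,2\}$ obtained. I would conclude by noting that the positivity of $s_n$ already proven in that lemma rules out any degenerate value below $1$, so the dichotomy above is exhaustive.
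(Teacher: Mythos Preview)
Your core argument is correct and essentially identical to the paper's: both use $s_n = k + D_{n-1} - 2\delta_{n-1}$ from Lemma~\ref{lem:Non-empty_sets_Crit_n_and_their_degree_structure}(ii), substitute $\delta_{n-1} = \lfloor (D_{n-1}+k-1)/2\rfloor$ from the Type~1 case of~\eqref{eq:Socle_degrees_initial_ideals_cases}, and split on the parity of $D_{n-1}+k$.

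Your edge-case digression about $n=2$ is, however, confused and unnecessary. You write that $\delta_0 = D_0 = 0$ gives $s_2 = k$, but the formula for $s_2$ involves $D_1$ and $\delta_1$, not $D_0$ and $\delta_0$. More importantly, no special case is needed: by Definition~\ref{def:Type_1_Type_2}, ``$n$ is of Type~1'' means precisely $k \geq \sigma_{n-1} - \tau_{n-1} - 1$, which is exactly the condition selecting the first branch of~\eqref{eq:Socle_degrees_initial_ideals_cases} for $\delta_{n-1}$. So the hypothesis that $n$ itself (not $n-1$) is of Type~1 already delivers the formula for $\delta_{n-1}$ uniformly, and your parity computation covers $n=2$ without further comment. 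You should simply delete the parenthetical aside.
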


\begin{proof}
Using Lemma~\ref{lem:Non-empty_sets_Crit_n_and_their_degree_structure}(ii) and its notation, we have $s_n=k+D_{n-1}-2\delta_{n-1}$. Since $n$ is of type 1, \eqref{eq:Socle_degrees_initial_ideals_cases} gives $\delta_{n-1}=\left\lfloor \frac{D_{n-1}+k-1}{2}\right\rfloor$. We compute
\[
k+D_{n-1}-2\left\lfloor \frac{D_{n-1}+k-1}{2}\right\rfloor
=
\begin{cases}  
k+D_{n-1}-(D_{n-1}+k-1) & \text{if } D_{n-1} + k \text{ is odd}, \\
k+D_{n-1}-(D_{n-1}+k-2) & \text{if } D_{n-1} + k \text{ is even}.
\end{cases}
\]
In any case, $k+D_{n-1}-2\delta_{n-1}\in\{1,2\}$, which implies the statement.
\end{proof}

We now describe an efficient method for computing the sequences $g_{\mathbf{m},k}$, based on appropriate differences of Hilbert series coefficients for the Artinian monomial complete intersections $R/P_{n,\mathbf{m}}$.

\begin{proposition}\label{prop:GB_degree_element_count_sequences_general_case}
Let $d \geq k$ be the degree of a Gr\"obner basis element $g_s \in G_{n,\mathbf{m},k}$ for some $n \geq 1$, with $s = \ini(g_s) \in \mathrm{Crit}_{n,\mathbf{m},k,n}$. For integers $j\geq 0$, let $D_j$ and $\delta_j$ be defined as in Lemma~\ref{lem:Non-empty_sets_Crit_n_and_their_degree_structure}. Then the following hold.
    \begin{itemize}
        \item[{\rm (i)}] The integer $n=n(d,\mathbf{m},k)$ is uniquely determined by $d$, $\mathbf{m}$, and $k$. In particular, the sequences $g_{\mathbf{m},k}$ introduced in~\eqref{eq:GB_degree_element_count_sequences} are well-defined.

        \item[{\rm (ii)}] Let $\epsilon$ be the minimal degree of an element $s\in \mathrm{Crit}_{n,\mathbf{m},k,n}$ and set $\gamma=d-\epsilon$. Then the number of $\mathbf{m}$-free degree-$d$ elements in the Gr\"obner basis is given by
        \begin{equation*}\label{eq:GB_count_via_Hilbert_series_of_MCI_general}
            g_{\mathbf{m},k}(d) = \max\left\{ 0,\; \HF\left( \frac{\mathbf{k}[x_1,\ldots,x_{n-1}]}{P_{n-1,\mathbf{m}}}, \delta_{n-1} - \gamma \right)
            - \HF\left( \frac{\mathbf{k}[x_1,\ldots,x_{n-1}]}{P_{n-1,\mathbf{m}}}, \delta_{n-1} - \gamma - k \right) \right\}.
        \end{equation*}
    \end{itemize}
\end{proposition}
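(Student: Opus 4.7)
The plan is to directly exploit Lemma~\ref{lem:Non-empty_sets_Crit_n_and_their_degree_structure}, which pins down both the range of degrees appearing in $\mathrm{Crit}_{n,\mathbf{m},k,n}$ and the multiplicity with which each such degree occurs. Throughout I write $\epsilon_n=k+D_{n-1}-\delta_{n-1}$ (the minimum degree of an element of $\mathrm{Crit}_{n,\mathbf{m},k,n}$) and $s_n^{\min}=k+D_{n-1}-2\delta_{n-1}$ (the $x_n$-exponent of that element).

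First I would prove part~(i). By Lemma~\ref{lem:Non-empty_sets_Crit_n_and_their_degree_structure}(ii),(iii) the degrees of the elements of $\mathrm{Crit}_{n,\mathbf{m},k,n}$ fill the interval $[\epsilon_n, d_n^{\max}]$ where
\[ d_n^{\max}=\epsilon_n+\min\Big\{\tfrac{m_n-1-s_n^{\min}}{2},\ \delta_{n-1}\Big\}. \]
Uniqueness of $n(d,\mathbf{m},k)$ then reduces to showing these intervals are pairwise disjoint, and it is enough to verify $d_n^{\max}<\epsilon_{n+1}$ whenever both $\mathrm{Crit}_{n,\mathbf{m},k,n}$ and $\mathrm{Crit}_{n+1,\mathbf{m},k,n+1}$ are non-empty. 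I would split into two subcases depending on which term of the $\min$ defining $d_n^{\max}$ is active. If the second term dominates, then $d_n^{\max}=k+D_{n-1}$ and the target inequality reduces to $\delta_n<m_n-1$. If the first term dominates, then $d_n^{\max}=(k+D_n)/2$ and the inequality becomes $2\delta_n<D_n+k$. In each subcase, substituting the formula for $\delta_n$ from~\eqref{eq:Socle_degrees_initial_ideals_cases} and splitting into Type~1 and Type~2 reduces the claim to an elementary inequality in $\sigma_n$ and $\tau_n$ that follows directly from the defining inequality of the corresponding Type.

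For part~(ii), Lemma~\ref{lem:Non-empty_sets_Crit_n_and_their_degree_structure}(iii) combined with part~(i) already gives
\[ g_{\mathbf{m},k}(d)\;=\;\HF\bigl(R_{n-1}/I_{n-1,\mathbf{m},k},\,\delta_{n-1}-\gamma\bigr) \]
for $n=n(d,\mathbf{m},k)$ and $\gamma=d-\epsilon_n$. It remains to translate the right-hand side into Hilbert-function data for $A_{n-1,\mathbf{m}}=R_{n-1}/P_{n-1,\mathbf{m}}$. For this, Theorem~\ref{thm:Initial_Ideal_Equality} together with Proposition~\ref{prop:Counting_Critical_Paths} yields
\[ \HS(R_{n-1}/I_{n-1,\mathbf{m},k};\,t)\;=\;\bigl[(1-t^k)\,\HS(A_{n-1,\mathbf{m}};\,t)\bigr], \]
where $[\,\cdot\,]$ denotes truncation at the first non-positive coefficient. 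Extracting the coefficient of $t^{\delta_{n-1}-\gamma}$ then produces the claimed formula; the outer $\max$ with $0$ simply absorbs the truncation, since by the SLP of $A_{n-1,\mathbf{m}}$ the bracketed series equals $\HF(A_{n-1,\mathbf{m}},\,j)-\HF(A_{n-1,\mathbf{m}},\,j-k)$ whenever this difference is positive and vanishes beyond the socle degree of $R_{n-1}/I_{n-1,\mathbf{m},k}$.

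I expect part~(i) to be the main obstacle: although the underlying idea is clean, verifying the disjointness requires the four-fold case split described above, with careful bookkeeping of the quantities $D_{n-1}, D_n, \delta_{n-1}, \delta_n, \tau_n, \sigma_n$ and of the non-emptiness condition of Lemma~\ref{lem:Non-empty_sets_Crit_n_and_their_degree_structure}(i). Part~(ii), by contrast, is essentially a direct coefficient-extraction once the Hilbert series identity above is in hand.
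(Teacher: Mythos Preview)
Your plan for part~(ii) is exactly what the paper does: Lemma~\ref{lem:Non-empty_sets_Crit_n_and_their_degree_structure}(iii) gives the count as $\HF(R_{n-1}/I_{n-1,\mathbf{m},k},\delta_{n-1}-\gamma)$, and Proposition~\ref{prop:Counting_Critical_Paths} together with Theorem~\ref{thm:Initial_Ideal_Equality} converts this into the stated difference of Hilbert function values of $R_{n-1}/P_{n-1,\mathbf{m}}$.

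For part~(i) there is a genuine gap. You reduce disjointness of the degree intervals to the inequality $d_n^{\max}<\epsilon_{n+1}$ for \emph{consecutive} indices with both $\mathrm{Crit}$-sets non-empty. But these sets can be empty for intermediate indices (for instance, with $\mathbf{m}=(m,m,\ldots)$ and $k=1$ one has $\mathrm{Crit}_{1,\mathbf{m},1,1}\neq\emptyset$, $\mathrm{Crit}_{2,\mathbf{m},1,2}=\emptyset$, $\mathrm{Crit}_{3,\mathbf{m},1,3}\neq\emptyset$), so comparing only consecutive non-empty pairs does not by itself yield disjointness for arbitrary $n_1<n_2$. Your two-subcase reduction (according to which term of the $\min$ is active) is tailored to adjacent indices, since it compares a quantity depending on $m_n$ with one depending on $\delta_n$; it does not directly give $d_{n_1}^{\max}<\epsilon_{n_2}$ when $n_2>n_1+1$. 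You would need either to show that the formal quantity $\epsilon_n=k+D_{n-1}-\delta_{n-1}$ is non-decreasing in $n$ (so that $d_{n_1}^{\max}<\epsilon_{n_1+1}\leq\epsilon_{n_2}$), or to argue directly for arbitrary $n_1<n_2$.

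The paper takes the latter route: it proves $d_{\max}^{(n_1)}<d_{\min}^{(n_2)}$ for arbitrary $n_1<n_2$ with both sets non-empty, via a four-way case distinction on the pair $(\text{Type of }n_1,\text{Type of }n_2)$ rather than on which term of the $\min$ dominates. Your alternative organization of the case split is appealing---in particular, your second subcase reduces to $2\delta_n<D_n+k$, which holds unconditionally by the computation in the proof of Lemma~\ref{lem:Non-empty_sets_Crit_n_and_their_degree_structure}, and your first subcase (active precisely when $k+D_{n-1}<m_n$) reduces to $\delta_n<m_n-1$, which follows since in that regime $\delta_n\leq D_{n-1}+k-1$. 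But to turn this into a complete proof you must add the missing monotonicity step for $\epsilon_n$ (or else revert to comparing arbitrary $n_1<n_2$ as the paper does).
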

\begin{proof}
    To prove (i), introduce the notation $d_{\min}^{(n)}$ and $d_{\max}^{(n)}$ for the minimal and maximal total degree, respectively, of an element of $\mathrm{Crit}_{n,\mathbf{m},k,n}$, whenever this set is nonempty. It is enough to show that $d_{\max}^{(n_1)}<d_{\min}^{(n_2)}$ for $n_1<n_2$, whenever both numbers are defined. We use the notations of Definition~\ref{def:Type_1_Type_2}. Then by~\eqref{eq:Socle_degrees_initial_ideals_cases}, for $n$ of type 1 we have $\delta_{n-1}=\left\lfloor\frac{D_{n-1}+k-1}{2}\right\rfloor$ and for $n$ of type 2 we have $\delta_{n-1}=\tau_{n-1}+k-1$. Now, as a consequence of Lemma~\ref{lem:Non-empty_sets_Crit_n_and_their_degree_structure}(ii), we have $d_{\min}^{(n)}=k+D_{n-1}-\delta_{n-1}$, and hence
    \begin{equation*}\label{eq:min_degrees_crit_sets_cases}
        d_{\min}^{(n)}=
        \begin{cases}
            \left\lceil\frac{D_{n-1}+k+1}{2}\right\rceil, & \text{ if } n \text{ is of type 1},\\
            \sigma_{n-1},& \text{ if } n \text{ is of type 2}.
        \end{cases}
    \end{equation*}
    Indeed, the computation for $n$ of type 1 is easy. If $n$ is of type 2, then $k+D_{n-1}-\delta_{n-1}=k+\tau_{n-1}+\sigma_{n-1}-1-(\tau_{n-1}+k-1)=\sigma_{n-1}$.

    Now, by Lemma~\ref{lem:Non-empty_sets_Crit_n_and_their_degree_structure}(iii), for both types we have
    \begin{equation}\label{eq:max_degrees_crit_sets_cases}
    d_{\max}^{(n)}\leq d_{\min}^{(n)}+\min\left\{ \left\lfloor \frac{m_n-1 - \alpha}{2} \right\rfloor, \delta_{n-1} \right\}
    \end{equation}
    where $\alpha = D_{n-1}+k-2\delta_{n-1}$. We now show the desired inequality $d_{\max}^{(n_1)}<d_{\min}^{(n_2)}$ by means of a four-way case distinction.

    {\bf Case 1:} $n_1$ and $n_2$ are both of type 1. Then $d_{\min}^{(n_1)}=\left\lceil \frac{D_{n_1-1}+k+1}{2} \right\rceil$ and $d_{\min}^{(n_2)}=\left\lceil \frac{D_{n_2-1}+k+1}{2} \right\rceil$. We note that in this situation, $\alpha\in \{1,2\}$ satisfies $\alpha\equiv D_{n_1-1}+k\mod 2$. Thus we can use~\eqref{eq:max_degrees_crit_sets_cases} to estimate
    \begin{align*}
        d_{\max}^{(n_1)}
        &\leq
        d_{\min}^{(n_1)}+\left\lfloor \frac{m_{n_1}-1-\alpha}{2} \right\rfloor\\
        &=
        \begin{cases}
            d_{\min}^{(n_1)}+ \frac{m_{n_1}-1}{2} - 1, & m_{n_1}-1\text{ even} \\
            d_{\min}^{(n_1)}+ \frac{m_{n_1}-2}{2}=\left\lceil \frac{D_{n_1-1}+k+1+(m_{n_1}-2)}{2} \right\rceil, & m_{n_1}-1\text{ odd and }\alpha=1\\
            d_{\min}^{(n_1)}+ \frac{m_{n_1}-4}{2}=\left\lceil \frac{D_{n_1-1}+k+1+(m_{n_1}-2)}{2} \right\rceil -1, & m_{n_1}-1\text{ odd and }\alpha=2.
        \end{cases}\\
        &=\left\lceil \frac{D_{n_1-1}+k+1+(m_{n_1}-1)}{2} \right\rceil -1
    \end{align*}
    In all these cases, $d_{\max}^{(n_1)}\leq d_{\min}^{(n_2)}-1<d_{\min}^{(n_2)}$ follows.

    {\bf Case 2:} $n_1$ is of type 1 and $n_2$ is of type 2. Then $d_{\min}^{(n_1)}=\left\lceil \frac{D_{n_1-1}+k+1}{2} \right\rceil$ and $d_{\min}^{(n_2)}=\sigma_{n_2-1}$. Using~\eqref{eq:max_degrees_crit_sets_cases}, we estimate
    \begin{align*}
        d_{\max}^{(n_1)}
        \leq
        d_{\min}^{(n_1)}+\left\lfloor \frac{D_{n_1-1}+k-1}{2} \right\rfloor
        =
        D_{n_1-1}+k
        <
        D_{n_1-1}+\sigma_{n_2-1}-\tau_{n_2-1}-1
    \end{align*}
    where the strict inequality is implied by $n_2$ being of type 2. In order to estimate this further, note that $D_{n_1-1}=\sigma_{n_1-1}-1+\tau_{n_1-1}\leq \tau_{n_2-1}$. Indeed, the maximum $\sigma_{n_2-1}$ of $\{m_1,\ldots,m_{n_2-1}\}$ cannot be realized by $m_j$ with $j<n_1$, as otherwise we obtain a contradiction via the computation
    \begin{align*}
        k
        &<
        \sigma_{n_2-1}-\tau_{n_2-1}-1
        =
        \sigma_{n_1-1}-\tau_{n_2-1}-1\\
        &=
        \sigma_{n_1-1}-\tau_{n_1-1}-1+(\tau_{n_1-1}-\tau_{n_2-1})\\
        &\leq
        k+(\tau_{n_1-1}-\tau_{n_2-1})
        \leq
        k,
    \end{align*}
    where the penultimate inequality is due to $n_1$ being of type 1. Plugging this into the estimate of $d_{\max}^{(n_1)}$, we obtain $d_{\max}^{(n_1)}<D_{n_1-1}+\sigma_{n_2-1}-\tau_{n_2-1}-1\leq \sigma_{n_2-1}-1<d_{\min}^{(n_2)}$, as desired.

    {\bf Case 3:} $n_1$ is of type 2 and $n_2$ is of type 1. Then $d_{\min}^{(n_1)}=\sigma_{n_1-1}$ and $d_{\min}^{(n_2)}=\left\lceil \frac{D_{n_2-1}+k+1}{2} \right\rceil$. Since $\mathrm{Crit}_{n_1,\mathbf{m},k,n_1}$ is not empty, we have $m_{n_1}>k+D_{n_1-1}-2\tau_{n_1-1}-2k+2=\sigma_{n_1-1}-\tau_{n_1-1}-k+1$. The minimal possible value for $m_{n_1}$ is thus $\sigma_{n_1-1}-\tau_{n_1-1}-k+2$. For this value of $m_1$, using~\eqref{eq:max_degrees_crit_sets_cases}, we estimate
    \begin{align*}
        d_{\max}^{(n_1)}
        &\leq
        \sigma_{n_1-1}+\left\lfloor \frac{m_{n_1}-1-D_{n_1-1}-k+2k+2\tau_{n_1-1}-2}{2} \right\rfloor\\
        &=
        \sigma_{n_1-1}+\left\lfloor \frac{\sigma_{n_1-1}-\tau_{n_1-1}-k+1-D_{n_1-1}-k+2k+2\tau_{n_1-1}-2}{2} \right\rfloor\\
        &=
        \sigma_{n_1-1}+\left\lfloor \frac{0}{2} \right\rfloor
        =
        \sigma_{n_1-1}.
    \end{align*}
    Also for this value of $m_{n_1}$, the relation $D_{n_2-1}\geq D_{n_1-1}+(m_{n_1}-1)=\tau_{n_1-1}+(\sigma_{n_1-1}-1)+(m_{n_1}-1)$ gives
    \begin{align*}
        d_{\min}^{(n_2)}
        &=
        \left\lceil \frac{D_{n_2-1}+k+1}{2} \right\rceil
        \geq
        \left\lceil \frac{\tau_{n_1-1}+(\sigma_{n_1-1}-1)+(m_{n_1}-1)+k+1}{2} \right\rceil\\
        &=
        \left\lceil \frac{\tau_{n_1-1}+(\sigma_{n_1-1}-1)+(\sigma_{n_1-1}-\tau_{n_1-1}-k+1)+k+1}{2} \right\rceil\\
        &=
        \left\lceil \frac{2\sigma_{n_1-1}+1}{2} \right\rceil
        =
        \sigma_{n_1-1}+ \left\lceil \frac{1}{2} \right\rceil
        =
        \sigma_{n_1-1}+ 1
        >
        \sigma_{n_1-1}
    \end{align*}
    Now, using~\eqref{eq:max_degrees_crit_sets_cases} again, we see that to increase $d_{\max}^{(n_1)}$ by a positive integer $\epsilon$, we must increase $m_{n_1}$ and thus also $D_{n_2-1}$ by at least $2\epsilon$, and thereby also increase $d_{\min}^{(n_2)}$ by $\epsilon$. The desired inequality $d_{\max}^{(n_1)}<d_{\min}^{(n_2)}$ follows.

    {\bf Case 4:} $n_1$ and $n_2$ are both of type 2. Then $d_{\min}^{(n_1)}=\sigma_{n_1 -1}$ and $d_{\min}^{(n_2)}=\sigma_{n_2 -1}$. Moreover, $\delta_{n_1-1}=\tau_{n_1-1}+k-1$ and $\delta_{n_2-1}=\tau_{n_2-1}+k-1$. By~\eqref{eq:max_degrees_crit_sets_cases}, we obtain in particular
    \begin{align*}
    d_{\max}^{(n_1)}
    &\leq
    d_{\min}^{(n_1)}+\delta_{n_1-1}
    =
    \sigma_{n_1 -1}+\tau_{n_1-1}+k-1\\
    &<
    \sigma_{n_1 -1}+\tau_{n_1-1}+\sigma_{n_2 -1}-\tau_{n_2-1}-2\\
    &=
    D_{n_1-1}+1+\sigma_{n_2 -1}-\tau_{n_2-1}-2,
    \end{align*}
    where the strict inequality uses that $n_2$ is of type 2. We now show that $\sigma_{n_1-1}<\sigma_{n_2-1}$. Otherwise we have $\sigma_{n_1-1}=\sigma_{n_2-1}$. Then, since $\mathrm{Crit}_{n_1,\mathbf{m},k,n_1}$ is not empty, we have $m_{n_1}>k+D_{n_1-1}-2\tau_{n_1-1}-2k+2=\sigma_{n_1-1}-\tau_{n_1-1}-k+1$. This implies
    \begin{align*}
        k
        &>
        \sigma_{n_1-1}-\tau_{n_1-1}-m_{n_1}+1\\
        &=
        \sigma_{n_1-1}-1-\tau_{n_2-1}+\tau_{n_2-1}-\tau_{n_1-1}-m_{n_1}+2\\
        &>
        k+\tau_{n_2-1}-\tau_{n_1-1}-(m_{n_1}-1)+1
        \geq
        k+1,
    \end{align*}
    a contradiction. Hence indeed $\sigma_{n_1-1}<\sigma_{n_2-1}$. In turn, $D_{n_1-1}-\tau_{n_2-1}-1<0$, and thus we can complete our estimate $d_{\max}^{(n_1)}<\sigma_{n_2 -1}=d_{\min}^{(n_2)}$.

    To prove (ii), we apply Lemma~\ref{lem:Non-empty_sets_Crit_n_and_their_degree_structure}(iii) together with Proposition~\ref{prop:Counting_Critical_Paths} and Theorem~\ref{thm:Initial_Ideal_Equality}.
\end{proof}

\begin{example}\label{ex:Enumerative_sequence_(3,2,2,3)_2}
    Let $\mathbf{m}=(3,2,2,3,\ldots)$, and $k=2$. Then Example~\ref{ex:Reduced_GB_(3,2,2,3)_2} gives that $(g_{\mathbf{m},k})_{d\geq 2}=(1,1,3,\ldots)$, where the entries $g_{\mathbf{m},k}(d)$ depend also on $m_5,m_6,\ldots$ for $d>4$.
\end{example}

For illustrative examples of interesting sequences arising from $(g_{\mathbf{m},k}(d))_{d \geq k}$, see Section~\ref{sec:cubes}.

\begin{corollary}\label{cor:GB_degree_element_count_sequences_type_1}
    Let the parameters $\mathbf{m}$ and $k$ be such that each $n > 1$ is of type 1. Let $d \geq k$ be the degree of a Gr\"obner basis element $g_s \in G_{n,\mathbf{m},k}$ for some $n > 1$ with $s = \ini(g_s) \in \mathrm{Crit}_{n,\mathbf{m},k,n}$. Then the integer $n$ is uniquely determined by $d$, $\mathbf{m}$, and $k$, and is given by
        \begin{equation*}\label{eq:uniquely_defined_variable_count_for_given_degree}
            n = \max\left\{ \nu \in \mathbb{N} \;\middle\vert\; \frac{D_{\nu - 1} + k}{2} < d \right\}.
        \end{equation*}
\end{corollary}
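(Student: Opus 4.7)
The plan is to combine the uniqueness statement of Proposition \ref{prop:GB_degree_element_count_sequences_general_case}(i) with an explicit computation of the minimum degree of an element of $\mathrm{Crit}_{\nu,\mathbf{m},k,\nu}$ for each $\nu$, in the type 1 setting. Throughout, as in the proof of Proposition \ref{prop:GB_degree_element_count_sequences_general_case}, let $d_{\min}^{(\nu)}$ and $d_{\max}^{(\nu)}$ denote the minimum and maximum total degree of an element of $\mathrm{Crit}_{\nu,\mathbf{m},k,\nu}$, whenever this set is nonempty.

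First, I would invoke Case 1 of the proof of Proposition \ref{prop:GB_degree_element_count_sequences_general_case}(i), which under the type 1 assumption establishes the strict interlacing $d_{\max}^{(\nu_1)} < d_{\min}^{(\nu_2)}$ for all $\nu_1 < \nu_2$ for which both sides are defined. Consequently, the integer $n$ is characterized as the unique index with $d_{\min}^{(n)} \leq d \leq d_{\max}^{(n)}$, and equivalently as the largest $\nu$ satisfying $d_{\min}^{(\nu)} \leq d$.

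Next, I would specialize the formula $d_{\min}^{(\nu)} = k + D_{\nu-1} - \delta_{\nu-1}$ obtained from Lemma \ref{lem:Non-empty_sets_Crit_n_and_their_degree_structure}(ii). In type 1, Equation \eqref{eq:Socle_degrees_initial_ideals_cases} gives $\delta_{\nu-1} = \lfloor (D_{\nu-1}+k-1)/2 \rfloor$, whence
\[
d_{\min}^{(\nu)} \;=\; k + D_{\nu-1} - \left\lfloor \tfrac{D_{\nu-1}+k-1}{2} \right\rfloor \;=\; \left\lceil \tfrac{D_{\nu-1}+k+1}{2} \right\rceil.
\]
Since $d$ is an integer, the condition $\lceil (D_{\nu-1}+k+1)/2 \rceil \leq d$ is equivalent to $D_{\nu-1}+k+1 \leq 2d$, i.e.\ to $(D_{\nu-1}+k)/2 < d$. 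Finally, because $m_i \geq 2$ implies that $D_{\nu-1} = \sum_{i=1}^{\nu-1}(m_i-1)$ is strictly increasing in $\nu$, the set $\{\nu \in \mathbb{N} : (D_{\nu-1}+k)/2 < d\}$ is a nonempty finite initial segment of $\mathbb{N}$ (it contains $\nu=1$ since $d \geq k > k/2 = (D_0+k)/2$), and its maximum equals $n$, as claimed.

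The main obstacle, such as it is, lies in verifying the ceiling manipulation and in confirming that the identification with the largest admissible $\nu$ is valid; both reduce to routine manipulations, since the hard combinatorial work of disjointness of the degree intervals has already been done in Case 1 of the proof of Proposition \ref{prop:GB_degree_element_count_sequences_general_case}(i).
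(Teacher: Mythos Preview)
Your proposal is correct and follows essentially the same approach as the paper: both invoke the uniqueness from Proposition~\ref{prop:GB_degree_element_count_sequences_general_case}(i), compute $d_{\min}^{(\nu)}=\lceil (D_{\nu-1}+k+1)/2\rceil$ in the type~1 setting, and use the monotonicity of $D_\nu$ together with the Case~1 estimate to identify $n$ as the maximum of the stated set. One small imprecision worth tightening: your intermediate characterization ``largest $\nu$ satisfying $d_{\min}^{(\nu)}\le d$'' is a priori only over those $\nu$ with $\mathrm{Crit}_{\nu,\mathbf{m},k,\nu}\neq\emptyset$, whereas the set $\{\nu:(D_{\nu-1}+k)/2<d\}$ ranges over all $\nu$; to close this gap cleanly (as the paper does), use the explicit bound $d\le d_{\max}^{(n)}\le\lceil (D_n+k-1)/2\rceil\le (D_n+k)/2$ from Case~1 to conclude directly that $n+1$ is not in the set, independent of whether $\mathrm{Crit}_{n+1,\mathbf{m},k,n+1}$ is empty.
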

\begin{proof}
We adopt the notation from Proposition~\ref{prop:GB_degree_element_count_sequences_general_case} and its proof. Let $n$ be the integer specified in the statement, it is uniquely determined by Proposition~\ref{prop:GB_degree_element_count_sequences_general_case}(i). Since $n$ is of type 1, we have $d_{\min}^{(n)} = \left\lceil \frac{D_{n-1} + k + 1}{2} \right\rceil$. Clearly, $d_{\min}^{(n)} \leq d \leq d_{\max}^{(n)}$. Moreover, since $\left\lceil \frac{D_{\nu - 1} + k + 1}{2} \right\rceil > \frac{D_{\nu - 1} + k}{2}$, this shows that $n \in N:= \left\{ \nu \in \mathbb{N} \;\middle\vert\; \frac{D_{\nu - 1} + k}{2} < d \right\}$.

The sequence $(D_\nu)_{\nu \geq 0}$ is monotonic and unbounded because $m_i \geq 2$ for all $i \geq 1$, so the set $N$ has a maximal element. Case 1 of the proof of Proposition~\ref{prop:GB_degree_element_count_sequences_general_case} further gives $d \leq d_{\max}^{(n)} \leq \left\lceil \frac{D_n + k - 1}{2} \right\rceil$, implying that $n + 1 \notin N$. Hence, we conclude that $n = \max N$.
\end{proof}

\begin{remark}\label{rem:largest_degree_of_GB_element}
For $j\geq 0$, let $\delta_j$ and $D_j$ be as in Lemma~\ref{lem:Non-empty_sets_Crit_n_and_their_degree_structure} and fix an integer $n\geq 1$. Let $q\in\{1,\ldots,n\}$ be the maximal index such that $m_q>k+D_{q-1}-2\delta_{q-1}$ and $\HF\left( \frac{\mathbf{k}[x_1,\ldots,x_{q-1}]}{P_{q-1,\mathbf{m}}}, \delta_{q-1} \right)- \HF\left( \frac{\mathbf{k}[x_1,\ldots,x_{q-1}]}{P_{q-1,\mathbf{m}}}, \delta_{q-1} - k \right)>0$. Then the maximal degree of an element in the Gr\"{o}bner basis $G_{n,\mathbf{m},k}$ with $\mathbf{m}$-free leading monomial is
\[
k+D_{q-1}-\delta_{q-1}+\min\left\{ \left\lfloor\frac{m_{q}-1-(k+D_{q-1}-2\delta_{q-1})}{2}\right\rfloor,\delta_{q-1} \right\}.
\]
For a constant sequence $\mathbf{m} = (m, m, \ldots)$ with $m \geq 2$, the maximal degree of an element in the Gr\"obner basis $G_{n,\mathbf{m},k}$, with $\mathbf{m}$-free leading monomial, is $k$ if $n = 1$ and $k < m$, and otherwise $\left\lceil (D_n + k - 1)/2 \right\rceil$. This follows from a computation in Case 1 of the proof of Proposition~\ref{prop:GB_degree_element_count_sequences_general_case}.
\end{remark}

\begin{example}\label{ex:largest_degree_of_GB_element_(2,3,2,20,3)_3}
 Let $n = 5$, $\mathbf{m} = (2, 3, 2, 20, 3)$, and $k = 3$. The largest index such that $m_q > k + D_{q-1} - 2\delta_{q-1}$ is $q = 4$.~The maximal degree of an element in 
 $G_{n,\mathbf{m},k}$~is
{\small{\begin{align*}
k+D_{3}-\delta_{3}+\min\left\{ \left\lfloor\frac{m_4-1-(k+D_{3}-2\delta_{3})}{2}\right\rfloor,\delta_{3} \right\}
=3+4-3+\min\left\{ \left\lfloor\frac{20-1-1}{2}\right\rfloor,3 \right\}=7.
\end{align*}}}

Indeed, we have a highest degree element $x_4^7 + 7x_4^6x_5 + 21x_4^5x_5^2$. Notice that in this case we have a complete intersection since $x_4^{20}\in (x_1^2,x_2^3,x_3^2,x_5^3,(x_1+\cdots + x_5)^3)$ in the spirit of Remark~\ref{rem:Almost_CI_is_CI}.
\end{example}

\section{Special cases and applications}\label{sec:Applications}
In this section, we focus on the equigenerated case where $\mathbf{m} = (m, m, \ldots, m)$ for some integer $m \geq 2$. 
For simplicity, when the context is clear, we write $I_{n,m,k}$ and $M_{n,m,k}$ in place of $I_{n,\mathbf{m},k}$ and $M_{n,\mathbf{m},k}$, respectively.
We begin by examining the cases $m = 2$ and $m = 3$, and show that the degree sequences of the reduced Gr\"obner bases in these cases are enumerated by convolutions of the Catalan and Motzkin numbers, respectively. In \S\ref{sec:quantum}, we consider the equigenerated case with $m \geq 2$ and $k = 1$, and examine the corresponding degree sequences, which also appear in quantum physics. In \S\ref{sub:n_even}, we establish a connection to $s$-Catalan numbers and the $(m-1)$-Catalan triangle. As a consequence, we prove in Corollary~\ref{cor:Log_convexity_rows_m-1_Catalan} that the rows of the $(m-1)$-Catalan triangle are log-concave.
In \S\ref{sub:WLP}, we investigate the Weak Lefschetz Property (WLP) for our ideals over fields of characteristic $p$. In particular, in Theorem~\ref{thm:Wlp_cond}, we provide a characterization of when $R/P_{n,\mathbf{m}}$ has the WLP in characteristic $p$, based on the initial ideal $\ini(I_{n,\mathbf{m},1})$.

\subsection{Equigenerated case $m = 2$}

The special case where the parameter vector is $\mathbf{m} = (2, \ldots, 2)$, for arbitrary values of $n$ and $k$, has been studied in \cite{kling2024grobnerbasesresolutionslefschetz}, where an alternative type of lattice path was introduced. The lattice paths and red lines defined in the present work correspond to those described in \cite{kling2024grobnerbasesresolutionslefschetz} under the invertible linear transformation  
$$\Phi: \mathbb{R}^2\to \mathbb{R}^2,\; \begin{bmatrix}
    x\\y
\end{bmatrix}\mapsto  \begin{bmatrix}
    0 & 1 \\ 1 & -1
\end{bmatrix}\begin{bmatrix}
    x\\y
\end{bmatrix}\, .$$

Figure~\ref{fig:Squares_Lattice_Paths_Transformation} illustrates this transformation with an example.

\begin{figure}[!ht]
    \centering
    \begin{tikzpicture}
        \draw[help lines] (0,-1) grid (4,2);
        \draw[line width=0.02cm, color=red] (0,-1)--(4,1);
        \draw[line width=0.1cm] (0,0)--(1,0)--(2,1)--(3,1)--(4,1);
        \draw[left] (0,0) node(origin){$(0,0)$};
        \draw[left] (0,-1) node(red){$(0,-1)$};
        \draw[right] (4,1) node(path){$(4,1)$};
    \end{tikzpicture}
    \begin{tikzpicture}
        \draw[help lines] (0,0) grid (1,3);
        \draw[line width=0.02cm, color=red] (0,2)--(1,3);
        \draw[line width=0.1cm] (0,0)--(0,1)--(1,1)--(1,2)--(1,3);
        \draw[left] (0,0) node(origin){$(0,0)$};
        \draw[left] (0,2) node(red){$(0,2)$};
        \draw[right] (1,3) node(path){$(1,3)$};
    \end{tikzpicture}
\caption{(Left) The lattice path corresponding to the leading term $x_1 x_3 x_4 \in \mathrm{in}(I_{4,(2,2,2,2),4})$, shown together with the graph of $L_{4,(2,2,2,2),4}$; (Right) The image of this configuration under the transformation $\Phi$, as introduced in \cite{kling2024grobnerbasesresolutionslefschetz}.}
    \label{fig:Squares_Lattice_Paths_Transformation}
\end{figure}
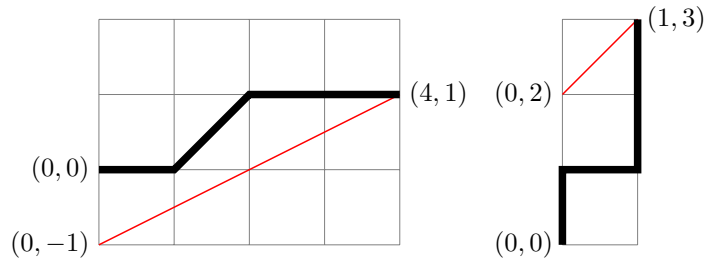

One can check that this transformation is a bijection between paths corresponding to leading terms in the Gr\"obner basis and that the formula \eqref{eq:more_explicit_gb_polynomial_g_s} for the Gr\"obner basis elements does specialize to the elementary symmetric polynomials way of expressing them from \cite{kling2024grobnerbasesresolutionslefschetz}. By \cite[Theorem 2.21]{kling2024grobnerbasesresolutionslefschetz}, for $\mathbf{m}=(2,2,\ldots)$ the sequence $g_{\mathbf{m},k}$ is a shift of the $(k-1)$-fold self convolution of the sequence of Catalan numbers.

\subsection{Equigenerated case $m = 3$}\label{sec:cubes}

Given a fixed parameter value $k$, the sequence $g_{(3,\ldots,3),k}$, counting the elements in the reduced Gr\"obner basis of $I_{n,3,k}$ whose leading terms are divisible by $x_n$ are enumerated by convolutions of the Motzkin numbers and a related sequence known as the Riordan numbers. To state this precisely, we first recall the definitions of these sequences.

\begin{definition}\label{def:Motzkin_Riordan}
A \emph{Motzkin path} of length $n$ is a lattice path from $(0,0)$ to $(n,0)$ consisting of steps that never fall below the $x$-axis, where each step is one of the following: an \emph{up step} $(1,1)$, a \emph{flat step} $(1,0)$, or a \emph{down step} $(1,-1)$.
Using these paths, two integer sequences are defined.
\begin{itemize}
    \item The \emph{$n$th Motzkin number} $M(n)$ counts the number of Motzkin paths of length $n$ \cite[A001006]{OEIS_2018}. By convention, $M(0) = 1$, corresponding to the empty path.
    \item The \emph{$n$th Riordan number} $R(n)$ counts the number of Motzkin paths of length $n$ with no flat steps on the $x$-axis \cite[A005043]{OEIS_2018}. By convention, $R(0) = 1$ and $R(1) = 0$, since the only Motzkin path of length one is a flat step on the $x$-axis. \footnote{See also \cite{CatalanMotzkinRiordan} for a detailed exploration of the relationships between Catalan, Motzkin, and Riordan numbers. Notably, the identity $M(n) = R(n) + R(n+1)$ relates the Motzkin and Riordan sequences.}
\end{itemize}
\end{definition}
Table~\ref{tab:MotzkinRiordan} lists the initial values of these sequences.

\begin{table}[h!]
\centering
    $$\begin{array}{|l||r|r|r|r|r|r|r|r|r|r|r|}
        \hline
        n & 0 & 1 & 2 & 3 & 4 & 5 & 6 & 7 & 8 \\
        \hline
        \hline
        M(n) & 1 & 1 & 2 & 4 & 9 & 21 & 51 & 127 & 323 \\
        \hline
        R(n) & 1 & 0 & 1 & 1 & 3 & 6 & 15 & 36 & 91 \\
        \hline
    \end{array}$$
    \caption{Motzkin and Riordan numbers.}
    \label{tab:MotzkinRiordan}
\end{table}

We now define a family of integer sequences that count specific elements in the Gr\"obner basis. 
\begin{definition}\label{def:GB_elems_sequences_m=3}
    For each integer $k\geq 1$, let $\delta_k = \lceil \frac{k}{2} \rceil$. We define the integer sequence 
    $g_{3,k} \colon \mathbb{Z}_{\geq 0} \to \mathbb{Z}_{\geq 0}$ by
    \begin{equation}\label{eqn:GB_elems_sequences_m=3}
        g_{3,k}(n) := \left|  \mathrm{Crit}_{n+\delta_k, 3, k, n+\delta_k}  \right|.
    \end{equation}
\end{definition}

The sequence $g_{3,k}$ is simply a shifted version of the sequence $g_{(3,\ldots,3),k}$ from~\eqref{eq:GB_degree_element_count_sequences} where the shift is such that $g_{3,k}(0)$ equals the first nonzero value of $g_{(3,\ldots,3),k}$. Equivalently, $g_{3,k}$ is obtained by removing the leading zeros from $g_{(3,\ldots,3),k}$.
\begin{remark}
Recall that each set $\mathrm{Crit}_{n,3,k,j}$ can contain elements of only a single degree, since the degree is determined by the $x_j$-degree, and different $x_j$-degrees must differ by even numbers. However, the set $\{1,2\}$ of allowed $x_j$-degrees is too small to accommodate distinct $x_j$-degrees satisfying this condition. Moreover, for $\mathbf{m} = (3,\ldots,3)$, we are always in Case 1 in the terminology of the proof of Proposition~\ref{prop:GB_degree_element_count_sequences_general_case}(i). A careful analysis of the proof shows that $d_{\max}^{(n_1)} + 1 = d_{\min}^{(n_1+1)}$ holds in all cases, except when $k = 1$ and $n_1 = 1$, in which case $\mathrm{Crit}_{2,3,1,2} = \emptyset$.
\end{remark}

Table~\ref{tab:Number_of_GB_elems_m=3_various_k} provides the first few values of the sequences $g_{3,1}$, \ldots, $g_{3,6}$.

\begin{table}[h!]
\centering
    $$\begin{array}{|l||r|r|r|r|r|r|r|r|r|r|r|}
        \hline
        n & 0 & 1 & 2 & 3 & 4 & 5 & 6 & 7 & 8 \\
        \hline
        \hline
        g_{3,1}(n) & 1 & 0 & 1 & 1 & 3 & 6 & 15 & 36 & 91\\
        \hline
        g_{3,2}(n) & 1 & 1 & 2 & 4 & 9 & 21 & 51 & 127 & 323\\
        \hline
        g_{3,3}(n) & 1 & 1 & 3 & 6 & 15 & 36 & 91 & 232 & 603\\
        \hline
        g_{3,4}(n) & 1 & 2 & 5 & 12 & 30 & 76 & 196 & 512 & 1353 \\
        \hline
        g_{3,5}(n) & 1 & 2 & 6 & 15 & 40 & 105 & 280 & 750 & 2025 \\
        \hline
        g_{3,6}(n) & 1 & 3 & 9 & 25 & 69 & 189 & 518 & 1422 & 3915 \\
        \hline
    \end{array}$$
    \caption{Numbers of cube-free Gr\"{o}bner basis elements of $I_{n+\delta_k,3,k}$ for $1\leq k \leq 6$.}
    \label{tab:Number_of_GB_elems_m=3_various_k}
\end{table}
The sequences $g_{3,1}$ and $g_{3,2}$ coincide with the Riordan and Motzkin numbers, respectively, for the initial values of $n$ as seen in Tables~\ref{tab:MotzkinRiordan} and \ref{tab:Number_of_GB_elems_m=3_various_k}. This is not a coincidence. It follows naturally from the lattice path interpretation of the leading terms in the Gr\"obner basis. In fact, these two sequences serve as building blocks, all sequences $g_{3,k}$ can be obtained from them via convolution. Before proving this result, we briefly recall the necessary definitions.

\begin{definition}\label{def:GeneratingFunctions_Convolutions}
Let $S, S_1, S_2 : \mathbb{Z}_{\geq 0} \to \mathbb{Z}_{\geq 0}$ be integer sequences. We define
\begin{itemize}
    \item The \emph{generating function} of $S$ as the formal power series
       $ \mathrm{GF}(S) = \sum_{n \geq 0} S(n) y^n$.
    
    \item The \emph{convolution} $S_1 \ast S_2$ as the integer sequence whose generating function satisfies
     $\mathrm{GF}(S_1 \ast S_2) = \mathrm{GF}(S_1) \cdot \mathrm{GF}(S_2)$.
\end{itemize}
\end{definition}

\begin{theorem}\label{thm:convolutions_sequences_m=3}
Let $k \geq 1$, and write $k = 2 q_k + r_k$ with integers $q_k \geq 0$ and $r_k \in \{0,1\}$. Then the generating function of the sequence $g_{3,k}$ defined in \eqref{eqn:GB_elems_sequences_m=3} satisfies
\begin{equation}\label{eq:Convolution}
    \mathrm{GF}(g_{3,k}) = \bigl(\mathrm{GF}(M)\bigr)^{q_k} \cdot \bigl(\mathrm{GF}(R)\bigr)^{r_k}.
\end{equation}
where $M$ and $R$ denote the Motzkin and Riordan number sequences, respectively.
\end{theorem}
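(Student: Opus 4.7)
The plan is to interpret $g_{3,k}(n)$ as the cardinality of a specific family of lattice paths and to decompose these paths into Motzkin and Riordan building blocks. First, I will specialize Proposition~\ref{prop:GB_degree_element_count_sequences_general_case}(ii) to $\mathbf{m} = (3,3,\ldots)$, using the fact that $\HF(R_N/P_{N,3},d) = T(N,d) := [t^d](1+t+t^2)^N$ is the trinomial coefficient. Writing $N = n + \delta_k - 1$ and computing $\delta_N$ from~\eqref{eq:Socle_degrees_initial_ideals_cases}, a short case analysis together with the symmetry $T(N,d) = T(N, 2N-d)$ yields the closed forms
\[
g_{3,2q}(n) = T(n+q-1,n) - T(n+q-1,n-2), \qquad g_{3,2q+1}(n) = T(n+q,n) - T(n+q,n-1).
\]

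Next, I will interpret $T(N,d)$ as the number of lattice paths of length $N$ starting at $(0,0)$ with steps $(+1,+1), (+1,0), (+1,-1)$ and ending at height $N - d$. For even $k = 2q$, the classical reflection principle applied to the horizontal line $y = -q$ identifies $g_{3,2q}(n)$ with the number of such paths ending at $(n+q-1, -(q-1))$ whose vertices all satisfy $y \geq -(q-1)$. After shifting $y \mapsto y + (q-1)$, these correspond bijectively to lattice paths from $(0,q-1)$ to $(n+q-1,0)$ staying weakly above the $x$-axis. The standard first-passage decomposition breaks such a path uniquely into a sequence of $q$ Motzkin paths at consecutive base heights $q-1, q-2, \ldots, 0$, separated by $q-1$ unit-down steps. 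Since the $q$ Motzkin lengths must sum to $n$, this establishes that $g_{3,2q}(n)$ equals the $q$-fold self-convolution of the Motzkin numbers at $n$, so $\mathrm{GF}(g_{3,2q}) = \mathrm{GF}(M)^q$.

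For odd $k = 2q+1$, I will argue via the explicit lattice-path characterization of $\mathrm{Crit}_{n+q+1,3,2q+1,n+q+1}$ developed in Section~\ref{sec:InitialIdeal}. The associated paths have length $n+q+1$, end at $(n+q+1,-q)$ with the final step flat at height $-q$, and are subject to the condition that no earlier edge admits an admissible reflection. A direct inspection of the admissible slopes of a reflected edge $P_{i-1}P_i'$ relative to the red line $y = -q - \tfrac{1}{2}$ (using Lemma~\ref{lem:Basic_reflection_arguments}) shows that, for $m = 3$, this last condition is equivalent to forbidding any flat step at height $-q$ strictly before the final one. After stripping the final step and shifting by $q$ in the $y$-direction, these paths biject with lattice paths from $(0,q)$ to $(n+q,0)$ staying weakly above the $x$-axis with no flat step at $y = 0$. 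The same first-passage decomposition now produces $q$ Motzkin paths at base heights $q, q-1, \ldots, 1$ separated by down steps, followed by exactly one Riordan path at base $0$. The total step count $n + q$ forces the Motzkin and Riordan lengths to sum to $n$, proving $g_{3,2q+1}(n) = (M^{\ast q} \ast R)(n)$ and hence $\mathrm{GF}(g_{3,2q+1}) = \mathrm{GF}(M)^{q}\mathrm{GF}(R)$.

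The main obstacle will lie in the odd case: the \emph{no earlier reflection} condition must be carefully translated into the genuinely Riordan-type constraint (no flat step at the base level), rather than a plain height restriction as in the even case. Once this translation is in place, both cases reduce to the same first-passage decomposition, yielding the convolution identity~\eqref{eq:Convolution}.
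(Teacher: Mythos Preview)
Your proof is correct and, in the end, arrives at the same first-passage decomposition as the paper. The odd case is essentially identical to the paper's argument: both directly characterize $\mathrm{Crit}_{n+q+1,3,2q+1,n+q+1}$ as paths of length $n+q+1$ ending with a flat step at height $-q$ and having no earlier flat step at that height, then strip the last step and decompose into $q$ Motzkin blocks plus one Riordan block. Your translation of the ``no earlier admissible reflection'' condition into ``no earlier flat step at $-q$'' is exactly the point the paper uses (implicitly relying on Lemma~\ref{lem:MinGens_no_path_vertex_below} to know all vertices lie at $y\ge -q$, so that $y_{j-1}+y_j\in\{-2q,-2q-1,-2q-2\}$ forces $y_{j-1}=y_j=-q$).

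The even case is where you genuinely diverge. The paper works directly from the definition of $\mathrm{Crit}_{n+q,3,2q,n+q}$ and characterizes these as paths ending at $(n+q,-q)$ with a final down step and no earlier contact with $y=-q$; removing the last step gives exactly your paths from $(0,q-1)$ to $(n+q-1,0)$ staying weakly above the axis. You instead route through Proposition~\ref{prop:GB_degree_element_count_sequences_general_case}(ii), extract the trinomial difference $T(n+q-1,n)-T(n+q-1,n-2)$, and invoke the classical reflection principle across $y=-q$ to land on the same constrained path count. Both are valid; the paper's route is more uniform across the two parities and avoids the auxiliary Hilbert-function computation, while yours has the merit of tying the count to an explicit closed formula and to a standard combinatorial tool, at the cost of a slightly asymmetric treatment of the even and odd cases.
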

\begin{proof}
    We start by analyzing the case $k=1$. Here, $\delta_1 = \lceil \frac{1}{2} \rceil = 1$. For each $n \geq 0$, the term $g_{3,1}(n)$ counts the elements of $M_{n+1,3,1}$ that are divisible by $x_{n+1}$. These correspond precisely to Motzkin paths of length $n+1$ ending at $(n+1, 0)$ whose last step is a flat step on the $x$-axis and which have no flat steps on the $x$-axis prior to the last step.
By deleting this last flat step, we obtain a bijection between these paths and Motzkin paths of length $n$ with no flat steps on the $x$-axis. The number of such paths is exactly $R(n)$.
Therefore, we conclude that $g_{3,1} = R$, and its generating function is given by
$\mathrm{GF}(g_{3,1}) = \mathrm{GF}(M)^0 \cdot \mathrm{GF}(R)^1$. 
(Note that this matches the decomposition $1 = 2 \cdot 0 + 1$, so $q_1 = 0$ and $r_1 = 1$.)

Next, we consider the case $k=2$. Here, $\delta_2 = \lceil \frac{2}{2} \rceil = 1$. For each $n \geq 0$, $g_{3,2}(n)$ counts the elements of $M_{n+1, 3, 2}$ divisible by $x_{n+1}$. These correspond to lattice paths ending at the point $(n+1, -1)$ whose last step is a down step, and which do not touch the horizontal line $y = -1$ at any earlier point.
By removing this last down step, we establish a bijection with the set of Motzkin paths of length $n$. The number of such paths is precisely $M(n)$.
Hence, we conclude that $g_{3,2} = M$, whose generating function satisfies
    $\mathrm{GF}(g_{3,2}) = \mathrm{GF}(M)^1 \cdot \mathrm{GF}(R)^0$. (Note that $2 = 2 \cdot 1 + 0$, so $q_2 = 1$ and $r_2 = 0$.)

 We now consider an arbitrary even value $k = 2 q_k$, so that $\delta_k = q_k$. The sequence $g_{3,k}(n)$ counts the elements of $M_{n + q_k, 3, k}$ divisible by $x_{n + q_k}$. These correspond to lattice paths ending at the point $(n + q_k, -q_k)$ whose last step is a down step, and which do not touch the horizontal line $y = -q_k$ at any earlier step.
Such a path $P$ can be decomposed into $q_k$ translated lattice paths, each associated with elements of $M_{n_1 + 1, 3, 2}$, \ldots, $M_{n_{q_k} + 1, 3, 2}$ divisible by $x_{n_1 + 1}$, \ldots, $x_{n_{q_k} + 1}$ respectively, where the integers $n_1, \ldots, n_{q_k} \geq 0$ satisfy $n_1 + \cdots + n_{q_k} = n$.
More precisely, the first segment $P_1$ is the initial portion of $P$ up to its first intersection with the line $y = -1$. The second segment is the translation of the path following $P_1$ up to the first intersection with $y = -2$, and so forth, until the $q_k$th segment. Thus,
\begin{equation}\label{eqn:Convolution_computation_even_case_m=3}
    g_{3,k}(n + q_k) = \sum_{\substack{(n_1, \ldots, n_{q_k}) \in \mathbb{Z}_{\geq 0}^{q_k} \\ n_1 + \cdots + n_{q_k} = n}} \prod_{j=1}^{q_k} g_{3,2}(n_j),
\end{equation}
which implies that the generating function satisfies \eqref{eq:Convolution}, 
as claimed.

The case of an arbitrary odd value $k = 2 q_k + 1$, where $\delta_k = q_k + 1$ and $r_k = 1$, is similar to the even case. Here, $g_{3,k}$ counts elements of $M_{n+\delta_k, 3, k}$ divisible by $x_{n+\delta_k}$. These correspond to lattice paths ending at $(n + q_k, -q_k)$ whose last edge is flat and have no flat edge on the horizontal line $y = -q_k$ prior to this final edge.
As before, we decompose such a path into translations of simpler lattice paths. The first $q_k$ segments $P_1, \ldots, P_{q_k}$ are constructed exactly as in the even case $k = 2 q_k$. The remaining segment is a translation of a Motzkin path ending with a flat edge that has no flat edges on its starting level before this final edge.
The total number of steps in these $\delta_k$ segments sums to $n + \delta_k$, the total step count of the original path $P$. An analogous calculation to \eqref{eqn:Convolution_computation_even_case_m=3} then shows that
$\mathrm{GF}(g_{3,k})=\mathrm{GF}(g_{3,2})^{q_k}\cdot \mathrm{GF}(g_{3,1})=\mathrm{GF}(M)^{q_k}\cdot \mathrm{GF}(R)$.
\end{proof}

\begin{example}\label{ex:GB_path_decomposition_m=3}
We illustrate Theorem~\ref{thm:convolutions_sequences_m=3} with two examples in Figures~\ref{fig:Path_decomposition_m=3_odd_case} and~\ref{fig:Path_decomposition_m=3_even_case}, showing how lattice paths for Gr\"{o}bner basis leading terms decompose.

  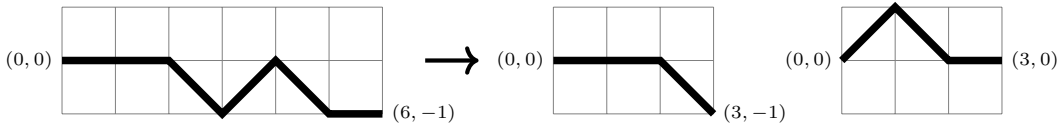
\begin{figure}[!ht]
\centering
\begin{tikzpicture}[scale=0.71]
    \begin{scope}[shift={(0,0)}]
        \draw[help lines] (0,-1) grid (6,1);
        \draw[line width=0.1cm] (0,0)--(1,0)--(2,0)--(3,-1)--(4,0)--(5,-1)--(6,-1);
        \draw[left] (0,0) node(origin){\scriptsize$(0,0)$};
        \draw[right] (6,-1) node(path){\scriptsize$(6,-1)$};
    \end{scope}

 \draw[->, ultra thick] (6.8,0) -- (7.8,0);

    \begin{scope}[shift={(9.2,0)}]
        \draw[help lines] (0,-1) grid (3,1);
        \draw[line width=0.1cm] (0,0)--(1,0)--(2,0)--(3,-1);
        \draw[left] (0,0) node(origin){\scriptsize$(0,0)$};
        \draw[right] (3,-1) node(path){\scriptsize$(3,-1)$};
    \end{scope}

    \begin{scope}[shift={(14.6,0)}]
        \draw[help lines] (0,-1) grid (3,1);
        \draw[line width=0.1cm] (0,0)--(1,1)--(2,0)--(3,0);
        \draw[left] (0,0) node(origin){\scriptsize$(0,0)$};
        \draw[right] (3,0) node(path){\scriptsize$(3,0)$};
    \end{scope}
\end{tikzpicture}
\caption{The lattice path corresponding to the Gr\"{o}bner basis leading term $x_1 x_2 x_3^2 x_5^2 x_6 \in \mathrm{in}(I_{6,3,3})$, decomposed into translations of lattice paths associated with the leading terms $x_1 x_2 x_3^2 \in \mathrm{in}(I_{3,3,2})$ and $x_2^2 x_3 \in \mathrm{in}(I_{3,3,1})$.}
\label{fig:Path_decomposition_m=3_odd_case}
\end{figure}

   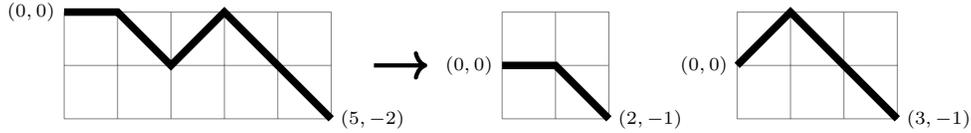
\begin{figure}[!ht]
    \centering
    \begin{tikzpicture}[scale=0.71]
        \begin{scope}[shift={(0,0)}]
            \draw[help lines] (0,-2) grid (5,0);
            \draw[line width=0.1cm] (0,0)--(1,0)--(2,-1)--(3,0)--(4,-1)--(5,-2);
            \draw[left] (0,0) node(origin){\scriptsize$(0,0)$};
            \draw[right] (5,-2) node(path){\scriptsize$(5,-2)$};
        \end{scope}

        \draw[->, ultra thick] (5.8,-1) -- (6.8,-1);

        \begin{scope}[shift={(8.2,-1)}]
            \draw[help lines] (0,-1) grid (2,1);
            \draw[line width=0.1cm] (0,0)--(1,0)--(2,-1);
            \draw[left] (0,0) node(origin){\scriptsize$(0,0)$};
            \draw[right] (2,-1) node(path){\scriptsize$(2,-1)$};
        \end{scope}

        \begin{scope}[shift={(12.6,-1)}]
            \draw[help lines] (0,-1) grid (3,1);
            \draw[line width=0.1cm] (0,0)--(1,1)--(2,0)--(3,-1);
            \draw[left] (0,0) node(origin){\scriptsize$(0,0)$};
            \draw[right] (3,-1) node(path){\scriptsize$(3,-1)$};
        \end{scope}
    \end{tikzpicture}
    \caption{The lattice path associated with the Gr\"{o}bner basis leading term $x_1 x_2^2 x_4^2 x_5^2 \in \mathrm{in}(I_{5,3,4})$, decomposed into translations of lattice paths associated with the leading terms $x_1 x_2^2\in \mathrm{in}(I_{2,3,2})$ and $x_2^2 x_3^2\in \mathrm{in}(I_{3,3,2})$.}
    \label{fig:Path_decomposition_m=3_even_case}
\end{figure}

\end{example}

\subsection{Equigenerated case with $k=1$ in quantum physics}\label{sec:quantum}
In this subsection, we consider the case $\mathbf{m} = (m,\ldots,m)$ with $m \geq 2$ and $k = 1$. We describe an application of the integer sequence $g_{\mathbf{m},1}$, which counts the number of $\mathbf{m}$-free elements in the reduced Gr\"obner basis of $I_{n,\mathbf{m},1}$ by degree for sufficiently large $n$, as introduced in~\eqref{eq:GB_degree_element_count_sequences}. It turns out that this sequence arises in 
quantum physics~\cite{Cohen_EtAl_Entanglement, curtright2017spin}.

More precisely, in~\cite[Section 1]{Cohen_EtAl_Entanglement}, the authors introduce an operator $W$ as an \emph{entanglement witness}, i.e., a tool for detecting whether a quantum state is entangled. The operator has eigenvalues $W_j = j(j+1)$, indexed by $j \in \{n/2 \mid j \in \mathbb{Z}_{\geq 0}\}$. They further develop a lattice path method to compute the degeneracy of each eigenvalue, which corresponds to the number of distinct spin-$j$ states associated with $W_j$.
The allowed lattice paths depend on the spin $\sigma \in (1/2)\cdot \mathbb{Z}_{\geq 0}$ of the quantum system. The degeneracy of states with eigenvalue $W_j$ is given by the number of lattice paths starting at $(0,0)$ and ending at $(N,j)$, for a prescribed positive integer $N$, corresponding to the number of particles in the system, using admissible steps of the form $(1,\sigma), (1,\sigma-1), \ldots, (1,-\sigma)$. These paths must satisfy the additional constraint
\begin{equation}\label{eq:Quantum_constraint}
    |j_1 - \sigma| \leq j_2 \leq j_1 + \sigma
\end{equation}
for each step from $(x,j_1)$ to $(x+1,j_2)$~as detailed in \cite[Eqs. (13)-(15)]{Cohen_EtAl_Entanglement}.
Among these, the states with eigenvalue $W_0$ corresponding to lattice paths ending at $(N,0)$, are of particular interest, as they are known to be decoherence-free~\cite{Cohen_EtAl_Entanglement}. Hence, we focus exclusively on these special states. The associated sequences are called \emph{spin $s$-Catalan numbers} in~\cite[Definition 3]{Linz_s_Catalan}.

As a first observation, note that the number of allowed steps is $2\sigma + 1$, which is \emph{odd} when $\sigma \in \mathbb{Z}$ and \emph{even}, namely $2\lceil \sigma \rceil \geq 2$, when $\sigma \notin \mathbb{Z}$. In this way, the spin values $\sigma$ effectively parameterize the integers $m \geq 2$. As in Section~\ref{sec:InitialIdeal}, the lattice paths starting at $(0,0)$, ending at some point $(N,j)$, and composed of these allowed steps (ignoring additional constraints) are in bijection with the $\textbf{m}$-free terms in $\mathbf{k}[x_1,\ldots,x_N]$.

If we associate the slope $\sigma$ with exponent $0$, the slope $\sigma - 1$ with exponent $1$, and so on down to the slope $-\sigma$ with exponent $m - 1$, then we can again visualize the triangular scheme of Hilbert series coefficients described in Remark~\ref{rem:Triangular_scheme}, using this new convention. Under this setup, the symmetry points of the columns lie on the $x$-axis, allowing us to apply the same analysis of the initial ideal of $I_{n,\mathbf{m},1}$ as in Section~\ref{sec:InitialIdeal}, but with the red boundary line now given by $y = -\tfrac{1}{2}$.
As a result, we can now characterize the lattice paths corresponding to the states with eigenvalue~$W_0$.

\begin{lemma}\label{lem:Characterization_of_decoherence_free_paths}
Let $m=2\sigma+1$. Consider the set of lattice paths starting at $(0,0)$ and ending at $(N,0)$, whose steps are taken from the set 
$\{(1,\sigma), (1,\sigma-1),\ldots,(1,-\sigma)\}$
that never pass below the $x$-axis and satisfy the constraint~\eqref{eq:Quantum_constraint}.  
These paths are in bijection with the set of $\mathbf{m}$-free monomials of degree $\sigma N$ in the quotient ring 
$\mathbf{k}[x_1,\ldots,x_N]/\ini(I_{N,\mathbf{m},1})$.
\end{lemma}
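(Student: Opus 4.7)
My plan is to recast the two conditions on the quantum lattice paths as statements about reflection-criticality of the corresponding paths (in the new convention described just before the lemma) and then invoke Theorem~\ref{thm:Initial_Ideal_Equality} to identify critical paths with the monomials in $\ini(I_{N,\mathbf{m},1})$. In that convention, an $\mathbf{m}$-free monomial $x_1^{a_1}\cdots x_N^{a_N}$ of degree $d$ corresponds to the path from $(0,0)$ whose $i$-th step has slope $\sigma-a_i\in\{-\sigma,\ldots,\sigma\}$ and which ends at $(N,\sigma N-d)$; thus monomials of degree $\sigma N$ are exactly those paths ending at $(N,0)$. The red line is the horizontal line $y=-\tfrac12$, and the reflection of a lattice point $(x,y)$ through it is $(x,-1-y)$.

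The key step is to show that a path $\mathbf{P}$ of degree $\sigma N$ is critical if and only if it either has a vertex with $y\leq -1$ or has an edge from $(i-1,y_A)$ to $(i,y_B)$ with $y_A+y_B\leq\sigma-1$. If some vertex has $y\leq -1$, then since $\mathbf{P}$ starts and ends at $y=0$, some edge must cross the red line, and Lemma~\ref{lem:Basic_reflection_arguments}(i) yields an admissible reflected edge, so $\mathbf{P}$ is critical. If, instead, every vertex has $y\geq 0$, then for an edge $AB$ the reflected point $B'=(i,-1-y_B)$ gives an edge $AB'$ of slope $-1-y_A-y_B$; the upper admissibility bound is automatic, while the lower bound $-\sigma\leq -1-y_A-y_B$ rearranges to $y_A+y_B\leq\sigma-1$. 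Hence, on such paths, criticality is equivalent to the existence of an edge satisfying this inequality.

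Combining both cases, a path of degree $\sigma N$ gives a monomial outside $\ini(I_{N,\mathbf{m},1})$ precisely when it stays at $y\geq 0$ and every edge satisfies $y_A+y_B\geq\sigma$. To conclude, I will match these with the hypotheses of the lemma: the first is literally the condition that the path never passes below the $x$-axis, while unpacking $|j_1-\sigma|\leq j_2\leq j_1+\sigma$ from~\eqref{eq:Quantum_constraint} yields $j_2\leq j_1+\sigma$, $j_2\geq j_1-\sigma$, and $j_2\geq \sigma-j_1$; the first two merely encode that the step slope lies in $\{-\sigma,\ldots,\sigma\}$ and are automatic for any admissible path, while the third is exactly $j_1+j_2\geq\sigma$. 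This identifies the two sets and yields the desired bijection.

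The hard part will be the slope computation in the $y\geq 0$ case, which cleanly converts the algebraic criterion for membership in $\ini(I_{N,\mathbf{m},1})$ into the combinatorial inequality $y_A+y_B\leq\sigma-1$; once this is in place, matching the two resulting conditions with the spin-triangle inequality of~\eqref{eq:Quantum_constraint} is straightforward bookkeeping.
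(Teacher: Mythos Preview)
Your argument is correct and follows essentially the same route as the paper: translate membership in $\ini(I_{N,\mathbf{m},1})$ into criticality of the associated path with respect to reflection across $y=-\tfrac12$, compute the slope $-1-y_A-y_B$ of the reflected bridge edge, and identify the admissibility bound $y_A+y_B\le\sigma-1$ with the failure of the spin constraint $j_1+j_2\ge\sigma$.

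One small point to tighten: your case split ``some vertex has $y\le -1$'' versus ``every vertex has $y\ge 0$'' is not exhaustive when $\sigma$ is a half-integer, since a vertex can sit at $y=-\tfrac12$. The fix is immediate: replace ``$y\le -1$'' by ``$y<0$''. A vertex at $y=-\tfrac12$ lies on the red line, so the edge ending there satisfies $B'=B$ and Lemma~\ref{lem:Basic_reflection_arguments}(i) applies directly; alternatively, any edge into such a vertex automatically satisfies $y_A+y_B\le\sigma-1$ by the slope bound $y_A\le y_B+\sigma=\sigma-\tfrac12$. With this adjustment your two cases are complementary and the rest of the argument goes through unchanged.
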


\begin{proof}
We need to show that the paths described in the lemma are exactly the non-critical paths with respect to reflection on the line $y = -\frac{1}{2}$. By definition, these are the paths that never touch or cross $y = -\frac{1}{2}$ and have no edge $AB$ for which the reflected edge $AB'$ has an admissible slope.
Since the allowed slopes are of the form $\tau/2$ for integers $\tau$, the condition that a path never touches or crosses $y = -\frac{1}{2}$ is equivalent to the path never going below the $x$-axis.
For the second condition, let $A = (x, j_1)$ and $B = (x+1, j_2)$ be an edge of the path. Reflecting this edge across 
$y = -\frac{1}{2}$ maps $B$ to $B' = (x+1, -j_2 - 1)$. The slope of $AB'$ is then $-(j_1 + j_2 + 1)$.

Suppose, for contradiction, that this slope is admissible, i.e., $-(j_1 + j_2 + 1) \geq -\sigma$, which implies $j_1 + j_2 + 1 \leq \sigma$.
From the constraint~\eqref{eq:Quantum_constraint}, in particular $\sigma - j_1 \leq j_2$, we get $-j_1 \leq j_2 - \sigma$. Adding that to $j_1 + j_2 + 1 \leq \sigma$ gives $j_2 + 1 \leq j_2$, a contradiction.
Hence, the slope of $AB'$ cannot be admissible, and the lattice path is non-critical.
The converse inclusion that every non-critical path satisfies the assumptions of the lemma, is proved similarly.

The statement on the degrees follows by an argument analogous to the proof of Lemma~\ref{lem:path_set_up_mixed_m}.
\end{proof}

Before proceeding to count the degeneracy of states with eigenvalue $W_0$ using the sequences $g_{\mathbf{m},1}$, we extend our notation to accommodate half-integer arguments by defining
\[
g_{\mathbf{m},k}(z/2) = 0 \quad \text{for all odd integers } z.
\]

\begin{proposition}\label{prop:Counting_degeneracy_for_j=0}
Let $\sigma$ be the spin for a system of $N$ particles and set $m = 2\sigma + 1$ with $\mathbf{m} = (m, \ldots, m)$. Then the degeneracy of states with eigenvalue $W_0$, that is, the number of distinct states with spin $0$, is given by 
$g_{\mathbf{m},1}(\sigma N + 1)$.
\end{proposition}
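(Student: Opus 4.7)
The plan is to compute both sides of the claimed identity as the same difference of Hilbert function values of the monomial complete intersection $R_N/P_{N,\mathbf{m}}$, where $R_N=\mathbf{k}[x_1,\ldots,x_N]$. First I would invoke Lemma~\ref{lem:Characterization_of_decoherence_free_paths} to identify the degeneracy of $W_0$-states with the number of $\mathbf{m}$-free monomials of degree $\sigma N$ in the quotient $R_N/\ini(I_{N,\mathbf{m},1})$. Combining Theorem~\ref{thm:Initial_Ideal_Equality} with Proposition~\ref{prop:Counting_Critical_Paths} gives
\[
\HS(R_N/\ini(I_{N,\mathbf{m},1});t)=\bigl[(1-t)\,\HS(R_N/P_{N,\mathbf{m}};t)\bigr].
\]
The socle degree of $R_N/P_{N,\mathbf{m}}$ is $N(m-1)=2\sigma N$, so by Remark~\ref{rem:Properties_Artinian_monomial_CIs} its Hilbert series is symmetric and unimodal about the midpoint $\sigma N$. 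In particular the coefficient of $t^{\sigma N}$ in $(1-t)\HS(R_N/P_{N,\mathbf{m}};t)$ is non-negative, so the bracket truncation has no effect at degrees $\leq \sigma N$; using the symmetry $\HF(R_N/P_{N,\mathbf{m}},\sigma N-1)=\HF(R_N/P_{N,\mathbf{m}},\sigma N+1)$ the degeneracy then equals
\[
\HF(R_N/P_{N,\mathbf{m}},\sigma N)-\HF(R_N/P_{N,\mathbf{m}},\sigma N+1).
\]

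Second, I would evaluate $g_{\mathbf{m},1}(\sigma N+1)$ using Proposition~\ref{prop:GB_degree_element_count_sequences_general_case}(ii). Since $\mathbf{m}=(m,m,\ldots)$ is constant with $m\geq 2$, every index is of Type~1 and one has $D_\nu=2\sigma\nu$ together with $\delta_\nu=\sigma\nu$. Corollary~\ref{cor:GB_degree_element_count_sequences_type_1} applied to $d=\sigma N+1$ yields $n=N+1$, and Lemma~\ref{lem:Non-empty_sets_Crit_n_and_their_degree_structure}(ii) gives $\epsilon=k+D_N-\delta_N=\sigma N+1$, so $\gamma=d-\epsilon=0$. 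Substituting into the formula of Proposition~\ref{prop:GB_degree_element_count_sequences_general_case}(ii) yields
\[
g_{\mathbf{m},1}(\sigma N+1)=\HF(R_N/P_{N,\mathbf{m}},\sigma N)-\HF(R_N/P_{N,\mathbf{m}},\sigma N-1),
\]
which, after one more application of the symmetry of $\HS(R_N/P_{N,\mathbf{m}};t)$, coincides with the expression obtained in the first step.

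The main potential subtlety is the degenerate case in which $\sigma N$ is not an integer (half-integer spin $\sigma$ with odd $N$): in this situation no lattice paths of the required kind exist, so the physics side vanishes, while on the algebra side $g_{\mathbf{m},1}(\sigma N+1)=0$ holds by the standing convention that $g_{\mathbf{m},k}(z/2)=0$ for odd $z$, and the equality trivially reduces to $0=0$. Apart from this bookkeeping, the proof is a short reconciliation between the lattice-path interpretation of the initial ideal and the symmetry together with unimodality of the Hilbert series of an Artinian monomial complete intersection.
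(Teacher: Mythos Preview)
Your argument is correct in substance and offers a genuinely different route from the paper's. The paper argues bijectively: to each decoherence-free path of length $N$ it appends a single step of slope $\sigma-1$ (equivalently, multiplies the corresponding monomial by $x_{N+1}$), checks that the resulting path is the lattice path of a minimal $\mathbf{m}$-free generator of $\ini(I_{N+1,\mathbf{m},1})$ of degree $\sigma N+1$, and invokes Lemma~\ref{lem:exponent_of_last_variable} and Theorem~\ref{thm:Gen_sys_for_critical_path_ideal} to conclude. Your approach instead reduces both sides to the common value $\HF(R_N/P_{N,\mathbf{m}},\sigma N)-\HF(R_N/P_{N,\mathbf{m}},\sigma N-1)$ via Proposition~\ref{prop:Counting_Critical_Paths} and Proposition~\ref{prop:GB_degree_element_count_sequences_general_case}. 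The bijective proof is shorter and more self-contained; yours leans more heavily on the enumerative machinery already developed, but has the advantage of making transparent \emph{why} the count is a first difference of the Hilbert function at the midpoint.

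One small imprecision: your claim that ``every index is of Type~1'' is not quite right. For constant $\mathbf{m}=(m,\ldots)$ with $m>2$ and $k=1$, the index $n=2$ is of Type~2 (since $\sigma_1-\tau_1-1=m-1>1=k$), so the hypothesis of Corollary~\ref{cor:GB_degree_element_count_sequences_type_1} is not literally satisfied. This does not damage your argument for $N\geq 2$: the relevant index is $n=N+1\geq 3$, which \emph{is} of Type~1, and you can obtain $n=N+1$ and $\epsilon=\sigma N+1$ directly from Lemma~\ref{lem:Non-empty_sets_Crit_n_and_their_degree_structure}(ii) together with Proposition~\ref{prop:GB_degree_element_count_sequences_general_case}(i), bypassing Corollary~\ref{cor:GB_degree_element_count_sequences_type_1}. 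It would be cleaner to cite those results instead.
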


\begin{proof}
Since the number of allowed slopes for spin $\sigma$ is $m = 2\sigma + 1$, this follows from observing that from any path described in the statement of Lemma~\ref{lem:Characterization_of_decoherence_free_paths}, we obtain the path of a minimal $\mathbf{m}$-free generator of $\ini(I_{N+1,\mathbf{m},1})$ by appending a step with slope $\sigma - 1$ at the end (this is equivalent to multiplying by the variable $x_{N+1}$). Indeed, this last step goes from $A = (N, 0)$ to $B = (N+1, \sigma - 1)$. We have $B' = (N+1, -(\sigma - 1) - 1) = (N+1, -\sigma)$, and the slope of $AB'$ is $-\sigma$, which is the minimal admissible slope. Applying Lemma~\ref{lem:exponent_of_last_variable}, Theorem~\ref{thm:Gen_sys_for_critical_path_ideal}, and Lemma~\ref{lem:Characterization_of_decoherence_free_paths} yields the claim.
\end{proof}

\subsection{Equigenerated case with $n$ even and $k=1$ applied to $s$-Catalan numbers}\label{sub:n_even}

In this section, we apply the enumerative properties of the Gr\"{o}bner bases $G_{2n,\mathbf{m},1}$ to the study of generalized Catalan numbers, denoted $s$-Catalan numbers. The $1$-Catalan numbers coincide with the usual Catalan numbers, and by \cite[Theorem 2.21]{kling2024grobnerbasesresolutionslefschetz}, these coincide with $g_{(2,2,\ldots),1}$. Here, we will relate the $s$-Catalan numbers for any $s\geq 1$ with the sequences $g_{(s+1,s+1,\ldots,),1}$. We start by recalling the definition of $s$-binomial coefficients.

The \emph{$s$-binomial coefficients} ${\binom{n}{d}\!}_s$ are defined (e.g. \cite{ghemit2024two,Linz_s_Catalan}) for positive integers $n$ and $s$ via 
$$(1+x+\cdots +x^s)=\sum_{d\in\mathbb{Z}} {\binom{n}{d}\!\!}_s x^d.$$
In our setting, we adopt the notation $m = s$, so the definition becomes
\begin{equation}\label{eq:bi_s_nomial_coeffs_via_HS_of_MCI}
   {\binom{n}{d}\!}_{m-1} = \HF(R/P_{n,\mathbf{m}},d)\,,
\end{equation}
meaning that the nonzero ``$(m-1)$-binomial'' coefficients correspond to the coefficients of the Hilbert series of the Artinian monomial complete intersection defined by an ideal equigenerated in degree $m \geq 2$. This connection allows us to relate our results to the $s$-Catalan numbers. (See, e.g.~\cite{Linz_s_Catalan} 
for the definition of $s$-Catalan numbers and~\cite{ghemit2024two} for generalized Catalan triangles.)

\begin{definition}
\label{def:s_Catalan_numbers_and_triangle}
The \emph{$s$-Catalan numbers} for a positive integer $s$ are defined as
\begin{equation}\label{eq:s_Catalan_numbers}
    C_{n}^{(s)}={\binom{2n}{sn}\!\!}_s-{\binom{2n}{sn+1}\!\!}_s.
\end{equation}
The coefficients of the \emph{$s$-Catalan triangle} are given by $C_{0,0}^{(s)}=1$ and
\begin{equation}\label{eq:s_Catalan_triangle}
    C_{n,k}^{(s)}={\binom{2n}{sn+k}\!\!}_s-{\binom{2n}{sn+k+1}\!\!}_s\,
\end{equation}
for $n>0$ and $k\geq 0$. The $s$-Catalan triangle, which is written with entry $C_{n,k}^{(s)}$ in row $n$ and column $k$, contains the $s$-Catalan numbers in its first column.
\end{definition}

\begin{remark}\label{rem:s_Catalan_vs_m-1_Catalan}
    Throughout the remainder of this work, we will consistently write $m - 1$ in place of $s$ to align with our established notation, and accordingly, write \emph{$(m-1)$-Catalan numbers}.
\end{remark}

In~\cite{ghemit2024two} it was proven that the rows of the $2$-Catalan triangle are log-concave. 
Using our results, we provide the following lattice path interpretation of the numbers in the $(m-1)$-Catalan triangle for any positive integer $m-1$.

\begin{proposition}\label{prop:Combinatorial_interpretation_s_Catalan_triangle}
    For $m\geq 2$, $n\geq 1$, and $k\geq 0$, the coefficient $C_{n,k}^{(m-1)}$ in the $(m-1)$-Catalan triangle counts the $m$-admissible lattice paths of degree $n(m-1)-k$ whose corresponding $\mathbf{m}$-free monomials are not in the initial ideal $\ini(I_{2n,\mathbf{m},1})$.
\end{proposition}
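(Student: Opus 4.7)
The plan is to identify the path count with the Hilbert function of the quotient $R/(M_{2n,\mathbf{m},1})$ at degree $n(m-1)-k$, and then unwind this via Proposition~\ref{prop:Counting_Critical_Paths} combined with the symmetry of the complete intersection $R/P_{2n,\mathbf{m}}$.

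First I would argue that the paths we want to count are in bijection with the monomials of degree $n(m-1)-k$ in $\mathbf{k}[x_1,\ldots,x_{2n}]$ that lie outside $\ini(I_{2n,\mathbf{m},1})$. By Lemma~\ref{lem:path_set_up_mixed_m}, $\mathbf{m}$-admissible lattice paths with $2n$ steps of a given degree correspond bijectively to $\mathbf{m}$-free monomials of that degree. Since $P_{2n,\mathbf{m}}\subseteq \ini(I_{2n,\mathbf{m},1})$, every monomial outside the initial ideal is automatically $\mathbf{m}$-free. Combined with Theorem~\ref{thm:Initial_Ideal_Equality}, the count we seek equals $\HF(R/(M_{2n,\mathbf{m},1}),\,n(m-1)-k)$.

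Next I would evaluate this Hilbert function using Proposition~\ref{prop:Counting_Critical_Paths} with parameter $1$ in place of $k$, which (before the truncation point) gives
\[
\HF(R/(M_{2n,\mathbf{m},1}),d) \;=\; \HF(R/P_{2n,\mathbf{m}},d)-\HF(R/P_{2n,\mathbf{m}},d-1).
\]
The main technical point is that this equality must hold at $d=n(m-1)-k\leq n(m-1)$, i.e.\ that the truncation in Proposition~\ref{prop:Counting_Critical_Paths} has not yet kicked in. In the equigenerated case $\mathbf{m}=(m,\ldots,m)$ with $2n\geq 2$ variables, the Hilbert series is the Gaussian polynomial $(1+t+\cdots+t^{m-1})^{2n}$, whose coefficients are strictly unimodal on $[0,n(m-1)]$ (so the consecutive differences are strictly positive there). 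Alternatively, one can bypass strict unimodality by combining the SLP for monomial complete intersections (Corollary~\ref{cor:has_SLP}) with the symmetry and weak unimodality of $\HF(R/P_{2n,\mathbf{m}},\cdot)$ to reach the same equality.

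Finally I would apply the symmetry of $\HF(R/P_{2n,\mathbf{m}},\cdot)$ about its middle degree $n(m-1)$ (Remark~\ref{rem:Properties_Artinian_monomial_CIs}(3)) to obtain $\HF(R/P_{2n,\mathbf{m}},\,n(m-1)-k-j)=\HF(R/P_{2n,\mathbf{m}},\,n(m-1)+k+j)$ for $j\in\{0,1\}$. Using the identification $\HF(R/P_{2n,\mathbf{m}},d)={\binom{2n}{d}\!}_{m-1}$ from~\eqref{eq:bi_s_nomial_coeffs_via_HS_of_MCI} and the definition~\eqref{eq:s_Catalan_triangle} of $C_{n,k}^{(m-1)}$, the difference collapses to exactly $C_{n,k}^{(m-1)}$, completing the proof. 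The degenerate range $k>n(m-1)$ requires no separate treatment: both sides vanish, the left by degree considerations and the right because ${\binom{2n}{n(m-1)+k}\!}_{m-1}=0$ whenever $n(m-1)+k>2n(m-1)$. The only real obstacle is the truncation subtlety, which the strict unimodality (or SLP) argument resolves cleanly.
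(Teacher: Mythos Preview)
Your proposal is correct and follows essentially the same route as the paper: both combine Theorem~\ref{thm:Initial_Ideal_Equality}, Proposition~\ref{prop:Counting_Critical_Paths}, the symmetry of $\HF(R/P_{2n,\mathbf{m}},\cdot)$, and the identification~\eqref{eq:bi_s_nomial_coeffs_via_HS_of_MCI} to reduce $C_{n,k}^{(m-1)}$ to $\HF(R/I_{2n,\mathbf{m},1},n(m-1)-k)$. If anything you are slightly more careful than the paper, since you explicitly address why the truncation in Proposition~\ref{prop:Counting_Critical_Paths} has not yet occurred at degree $n(m-1)-k$, a point the paper's proof leaves implicit.
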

\begin{proof}
Using  \eqref{eq:bi_s_nomial_coeffs_via_HS_of_MCI} and \eqref{eq:s_Catalan_triangle}, along with the properties of the Hilbert series of Artinian monomial complete intersections stated in Remark~\ref{rem:Properties_Artinian_monomial_CIs}, Proposition~\ref{prop:Counting_Critical_Paths} and Theorem~\ref{thm:Initial_Ideal_Equality}, we compute
    \begin{align*}
        C_{n,k}^{(m-1)}
        =&
        \binom{2n}{(m-1)n+k}_{m-1}-\binom{2n}{(m-1)n+k+1}_{m-1}\\
        =&
        \HF(R/P_{2n,\mathbf{m}},(m-1)n+k)-\HF(R/P_{2n,\mathbf{m}},(m-1)n+k+1)\\
        =&
        \HF(R/P_{2n,\mathbf{m}},2n(m-1)-((m-1)n+k))\\
        &-\HF(R/P_{2n,\mathbf{m}},2n(m-1)-((m-1)n+k+1))\\
        =&
        \HF(R/P_{2n,\mathbf{m}},(m-1)n-k)-\HF(R/P_{2n,\mathbf{m}},(m-1)n-k-1)\\
        =& \HS(R/I_{2n,\mathbf{m},1},(m-1)n-k)\,,
    \end{align*}
    which yields the claim.
\end{proof}

Our description of the rows of the $(m-1)$-Catalan triangle enables us to establish their log-concavity.

\begin{corollary}\label{cor:Log_convexity_rows_m-1_Catalan}
    The rows of the $(m-1)$-Catalan triangle are log-concave.
\end{corollary}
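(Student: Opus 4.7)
The plan is to reduce the claim to log-concavity of the Hilbert function of $R/I_{2n,\mathbf{m},1}$ and then to establish that log-concavity through the lattice-path interpretation of the Hilbert function.

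Proposition~\ref{prop:Combinatorial_interpretation_s_Catalan_triangle} gives the identification $C_{n,k}^{(m-1)} = \HF(R/I_{2n,\mathbf{m},1},\,n(m-1)-k)$, so the $n$-th row of the $(m-1)$-Catalan triangle is the Hilbert function of $R/I_{2n,\mathbf{m},1}$ indexed in reverse from the socle degree $n(m-1)$ down to $0$. The row log-concavity thus becomes
\[
\HF(R/I_{2n,\mathbf{m},1},d)^2 \ \geq\ \HF(R/I_{2n,\mathbf{m},1},d-1)\cdot \HF(R/I_{2n,\mathbf{m},1},d+1)\qquad(d\geq 1).
\]

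By Corollary~\ref{cor:has_SLP}, Theorem~\ref{thm:Initial_Ideal_Equality}, and Proposition~\ref{prop:Counting_Critical_Paths}, this Hilbert value equals the number $b_d$ of non-critical $\mathbf{m}$-admissible lattice paths from $(0,0)$ to $(2n,2n-d)$, extended by $b_d=0$ for $d>n(m-1)$. The log-concavity $b_{d-1}b_{d+1}\leq b_d^2$ will then be proved by a Lindstr\"om--Gessel--Viennot style swap: starting from an ordered pair $(Q_1,Q_2)$ of non-critical paths with endpoints $(2n,2n-d+1)$ and $(2n,2n-d-1)$, one locates a step index $x\in\{1,\dots,2n\}$ at which the slope of $Q_1$ can be decreased by $1$ and the slope of $Q_2$ increased by $1$, producing modified paths $(P_1,P_2)$ that both end at $(2n,2n-d)$. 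The midpoint admissibility identity of Lemma~\ref{lem:Basic_reflection_arguments}(ii) controls the slopes of the modified edges, and the non-criticality of $(Q_1,Q_2)$ is used to derive the non-criticality of $(P_1,P_2)$.

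The principal technical challenge is twofold. First, a valid swap position must be shown to exist for every input pair; for $m\geq 3$ one must rule out the pathological configuration in which at every coordinate $Q_1$ takes the extreme down-step of slope $2-m$ or $Q_2$ takes the up-step of slope $1$, and this will rely on a global averaging estimate comparing the heights of $Q_1$ and $Q_2$ to the red line $L_{2n,\mathbf{m},1}$ together with the endpoint condition $y_{Q_1}(2n)-y_{Q_2}(2n)=2$. Second, one must verify that the modified paths remain non-critical, which again uses the admissibility midpoint identity of Lemma~\ref{lem:Basic_reflection_arguments}(ii) applied locally at the swapped step together with the input non-criticality. Injectivity of the map will be established by recording from $(P_1,P_2)$ the unique coordinate at which the modification occurred.
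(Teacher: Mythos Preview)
Your reduction to log-concavity of $\HF(R/I_{2n,\mathbf{m},1},d)$ matches the paper, but from there the paper takes a much shorter route: it observes that this Hilbert function is exactly the positive part of the sequence of first differences of the coefficients of $\HS(R/P_{2n,\mathbf{m}};t)$, and then invokes a known result (the reference is to a paper establishing log-concavity of such first-difference sequences for Artinian monomial complete intersections). No combinatorial injection is constructed.

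Your proposed direct approach via a slope-transfer injection is genuinely different, but as sketched it has a real gap in step (2), preservation of non-criticality. Concretely, for the equigenerated case with $k=1$, the non-criticality of an $\mathbf{m}$-admissible path amounts (in the notation of the heights $h_j$ above the red line) to the inequalities $h_{j-1}+h_j>(m-1)/2$ for every edge. If you decrease the slope of $Q_1$ at step $x$ by $1$, then $P_1(j)=Q_1(j)$ for $j<x$ and $P_1(j)=Q_1(j)-1$ for $j\geq x$, so for edges with $j\geq x+1$ the quantity $h_{j-1}+h_j$ drops by $2$. Nothing in your outline prevents $Q_1$ from having an edge after $x$ with $h_{j-1}+h_j\in\{(m-1)/2+1,(m-1)/2+2\}$, in which case $P_1$ becomes critical. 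Lemma~\ref{lem:Basic_reflection_arguments}(ii) only relates the slopes of $AB$ and $A'B'$ under full reflection; it says nothing about how non-criticality behaves under a unit shift of a tail, so your appeal to it here does not address the issue. A viable injection would need a much more delicate choice of swap position (for instance a last-touching-type rule coupling the two paths), and the existence and injectivity claims would then have to be reproved for that specific rule. As written, the plan does not go through.
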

\begin{proof}
  The proof of Proposition~\ref{prop:Combinatorial_interpretation_s_Catalan_triangle} shows that the $n$th row of the $(m{-}1)$-Catalan triangle is given by the nonzero entries of the sequence of first differences of the vector of coefficients of $\HS(R/P_{2n,\mathbf{m}};t)$. In fact, it contains all the nonzero entries of that sequence, because in the column indexed by $k = 0$, we take the coefficient of $t^{(m{-}1)n}$ in that Hilbert series. But $(m{-}1)n$ is the socle degree of the algebra $R/I_{2n,\mathbf{m},1}$, i.e., the last nonzero entry in the sequence of first differences. Indeed, using the notation of Lemma~\ref{lem:Non-empty_sets_Crit_n_and_their_degree_structure}, 
$\delta_{2n}=\left\lfloor (D_{2n}+k-1)/2 \right\rfloor=\left\lfloor (2n(m-1)+1-1)/2 \right\rfloor=n(m-1)$.
The log-concavity of this positive part of the sequence of first differences was established in~\cite{142657}. (Note that in~\cite{142657}, the author twice writes ``log-convex'' where ``log-concave'' is meant.)
\end{proof}

\subsection{Weak Lefschetz property in positive characteristic}\label{sub:WLP}

In Corollary~\ref{cor:has_SLP}, we provided a new proof that all monomial complete intersections have the SLP when the base field has characteristic zero. However, over fields of positive characteristic, such ideals may fail to have either the WLP or the SLP. While a complete characterization exists for the SLP in this setting \cite{LUNDQVIST2019213}, the WLP is less understood. In particular, for general mixed degrees, a full characterization of the WLP is still unknown. Partial results  are given in 
\cite{brennerkaid, COOK2, HMNW,Kustin,KMRR, LUNDQVIST2019213}.

Nevertheless, in the equigenerated case, the WLP is classified, and for the case  $n \geq 5$ we have 
\begin{equation}\label{eq:wlp}
R/P_{n,\mathbf{m}} \text{ has the WLP in characteristic } p \text{ if and only if }  p > \left\lfloor \frac{n(m-1) +1}{2} \right\rfloor.
\end{equation}

This result is due to 
Kustin and Vraciu \cite{Kustin} for $\mathbf{k}$ infinite and  the second author and Nicklasson \cite{LUNDQVIST2019213} for $\mathbf{k}$ finite. We will give an alternative condition of the WLP in characteristic $p$ when $n \geq 5.$

Before stating this result, we first establish the following folklore result, for which we were unable to locate a suitable reference.

\begin{proposition}\label{prop:GB_mod_p}
Let $I = (f_1, \dots, f_t) \subseteq \mathbb{Q}[x_1, \dots, x_n]$ be a homogeneous ideal, and let $G = \{g_1, \dots, g_s\}$ be a Gr\"obner basis for $I$, where the denominators have been cleared so that all coefficients of the $f_i$ and $g_i$ are integers. If the coefficients of all leading terms $\ini(g_i)$ are not divisible by a prime $p$, then the reduction of $G$ modulo $p$ is a Gr\"obner basis for the reduction of $I$ in $\mathbb{F}_p[x_1, \dots, x_n]$.
\end{proposition}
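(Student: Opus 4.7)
The plan is to run Buchberger's criterion over the localization $\mathbb{Z}_{(p)}$ and then close the argument with a Hilbert function comparison. Set $\bar{g}_i$ and $\bar{f}_j$ for reductions modulo $p$, let $J=(\bar g_1,\dots,\bar g_s)\subseteq \mathbb{F}_p[x]$, and let $I_p=(\bar f_1,\dots,\bar f_t)\subseteq \mathbb{F}_p[x]$. First I would note that since the leading coefficient of each $g_i$ is a unit in $\mathbb{Z}_{(p)}$, we have $\ini(\bar g_i)=\overline{\ini(g_i)}$, so the leading monomials of the $g_i$ are preserved under reduction mod $p$.

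Next I would verify Buchberger's criterion for $\bar G$. The crucial point is that the division algorithm over $\mathbb{Q}[x]$, when the divisors are the $g_k$ whose leading coefficients lie in $\mathbb{Z}_{(p)}^{\times}$, can be executed entirely inside $\mathbb{Z}_{(p)}[x]$: the only scalar inversions involved are inversions of leading coefficients of the $g_k$. Since $G$ is a Gröbner basis of $I$, each S-polynomial reduces to zero, yielding standard expressions
\begin{equation*}
S(g_i,g_j)=\sum_{k}q^k_{ij}\,g_k,\qquad q^k_{ij}\in \mathbb{Z}_{(p)}[x],\qquad \ini(q^k_{ij}g_k)\preceq \ini(S(g_i,g_j)).
\end{equation*}
Reducing this identity modulo $p$ and using step 1 gives the corresponding standard expression for $S(\bar g_i,\bar g_j)$ in $\mathbb{F}_p[x]$. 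Thus $\bar G$ is a Gröbner basis of the ideal $J$ it generates, and $\ini(J)=(\overline{\ini(g_1)},\dots,\overline{\ini(g_s)})$ has the same monomial support as $\ini(I)$.

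Applying the same $\mathbb{Z}_{(p)}$-division argument to each generator $f_j\in I$ (which also lies in $\mathbb{Z}_{(p)}[x]$) yields $f_j=\sum_k r_j^k g_k$ with $r_j^k\in \mathbb{Z}_{(p)}[x]$, hence $\bar f_j\in J$ and so $I_p\subseteq J$. To upgrade this inclusion to equality, I would invoke a Hilbert function comparison. From step 2 and the preceding paragraph,
\begin{equation*}
\HF(\mathbb{F}_p[x]/J,d)=\HF(\mathbb{F}_p[x]/\ini(J),d)=\HF(\mathbb{Q}[x]/\ini(I),d)=\HF(\mathbb{Q}[x]/I,d),
\end{equation*}
where the middle equality holds because both monomial ideals have the same generators. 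On the other hand, the rank of the multiplication matrix expressing $(I_p)_d$ from the $\bar f_j$ is at most the rank of its lift expressing $I_d$ from the $f_j$, so $\HF(\mathbb{F}_p[x]/I_p,d)\geq \HF(\mathbb{Q}[x]/I,d)$. Combined with $I_p\subseteq J$, this forces equality of Hilbert functions, and then $I_p=J$ since one is contained in the other. Hence $\bar G$ is a Gröbner basis of $I_p$.

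The main obstacle is the bookkeeping in step 2, namely, verifying that the division algorithm in $\mathbb{Q}[x]$ with divisors in $G$ produces quotients in $\mathbb{Z}_{(p)}[x]$ whenever the dividend does; this is standard but must be asserted carefully. Everything else, including the passage from $I_p\subseteq J$ to $I_p=J$, is a clean consequence of the Hilbert function computation made possible by step 1.
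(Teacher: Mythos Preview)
Your $\mathbb{Z}_{(p)}$-lifting argument for Buchberger's criterion is cleaner than the paper's route, which instead argues by contradiction: assuming some $S(\bar g_i,\bar g_j)$ in degree $d$ fails to reduce to zero modulo $\bar G$, the paper asserts this produces an ``additional generator'' of the reduced ideal in degree $d$, hence $\dim_{\mathbb{F}_p}(I_p)_d>\dim_{\mathbb{Q}}(I)_d$, and then contradicts the same integer-matrix rank bound you invoke. Your direct verification that division stays inside $\mathbb{Z}_{(p)}[x]$ is a genuine simplification of that step.

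However, the final Hilbert-function step of your argument does not close. From $I_p\subseteq J$ you obtain $\HF(\mathbb{F}_p[x]/I_p,d)\geq \HF(\mathbb{F}_p[x]/J,d)$, and the rank argument gives $\HF(\mathbb{F}_p[x]/I_p,d)\geq \HF(\mathbb{Q}[x]/I,d)=\HF(\mathbb{F}_p[x]/J,d)$. Both inequalities point the \emph{same} way; neither supplies the reverse bound needed to force $I_p=J$. In fact the proposition as literally stated is false: take $n=t=s=1$, $f_1=p\,x_1$, $G=\{x_1\}$; the leading coefficient of $g_1$ is $1$, yet $I_p=(0)\subsetneq(x_1)=J$, so $\bar G$ is not a Gr\"obner basis of $I_p$. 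What is missing---in your argument and, tacitly, in the paper's (its ``additional generator'' is an element of $J$, not a priori of $I_p$)---is the reverse inclusion $J\subseteq I_p$, i.e.\ that each $g_k$ lies in the $\mathbb{Z}_{(p)}[x]$-ideal generated by the $f_j$. In the paper's actual application this holds, since the explicit elements $g_s$ are built as $\mathbb{Z}_{(p)}[x]$-combinations of the original generators once $p$ exceeds the relevant degree bound. With that extra hypothesis in hand, $I_p=J$ is immediate and your step~2 alone already finishes the proof; but as a proof of the proposition exactly as stated, the argument cannot be completed.
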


\begin{proof}
Assume that the leading coefficients of all $g_i$ are not divisible by $p$. Then the S-polynomials $S(g_i, g_j)$ are constructed in the same way over $\mathbb{F}_p$ as over $\mathbb{Q}$. We know that the S-polynomials $S(g_i, g_j)$ reduce to zero modulo $G$ over $\mathbb{Q}$. We must show that the same holds over $\mathbb{F}_p$. Assume not, and let $d$ be the smallest degree of some $S(g_i, g_j)$ that does not reduce to zero over $\mathbb{F}_p$. This implies that 
\[
\dim_{\mathbb{Q}}(\mathbb{Q}[x_1, \dots, x_n]/I)_d > \dim_{\mathbb{F}_p}(\mathbb{F}_p[x_1, \dots, x_n]/I)_d,
\]
since the non-reduced S-polynomial gives an additional generator in degree $d$ over $\mathbb{F}_p$. This is in turn equivalent to
\begin{equation}\label{eq:bad_deg_ineq}
\dim_{\mathbb{Q}}(I)_d < \dim_{\mathbb{F}_p}(I)_d.
\end{equation}
Now, suppose each $f_i$ has degree $d_i$. Consider the matrix $M$ whose columns are indexed by all monomials of degree $d$, and whose rows are indexed by polynomials of the form $f_i m_j$, where $i = 1, \ldots, t$ and $m_j$ ranges over all monomials of degree $d - d_i$. In the row corresponding to $f_i m_j$, the entry in the column indexed by the monomial $t$ is the coefficient of $t$ in $f_i m_j$.

The dimension $\dim_{\mathbb{Q}}(I)_d$ equals the rank of $M$, i.e., the size of the largest nonzero minor of $M$ over $\mathbb{Q}$. Since all entries of $M$ are integers, $\dim_{\mathbb{F}_p}(I)_d$ can similarly be computed as the size of the largest nonzero minor of $M$ over $\mathbb{F}_p$. If a minor is nonzero over $\mathbb{F}_p$, it is also nonzero over $\mathbb{Q}$, implying $\dim_{\mathbb{Q}}(I)_d \geq \dim_{\mathbb{F}_p}(I)_d$, contradicting~\eqref{eq:bad_deg_ineq}. Hence, the S-polynomials reduce to zero modulo $G$ over $\mathbb{F}_p$, and the proof is complete.
\end{proof}

\begin{theorem}\label{thm:Wlp_cond}
Let $\mathbf{k}$ be a field of characteristic $p$, and fix integers $m \geq 2$ and $n \geq 5$. Then $\mathbf{k}[x_1, \dots, x_n]/(x_1^m, \dots, x_n^m) = R/P_{n,\mathbf{m}}$ has the WLP if and only if $I_{n,\mathbf{m},1}$ has the same initial ideal as it does when $\mathbf{k}$ is replaced by a field of characteristic zero.
\end{theorem}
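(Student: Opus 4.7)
The plan is to prove the biconditional by showing that each side is equivalent to $\HS(R/I_{n,\mathbf{m},1};t)$ attaining its minimal value $\big[(1-t)\HS(R/P_{n,\mathbf{m}};t)\big]$ in characteristic $p$. The bridge is Remark~\ref{rem:Hilbert_series_via_initial_ideals_prelims}, which equates the Hilbert series of $R/I_{n,\mathbf{m},1}$ with that of its initial ideal, together with Proposition~\ref{prop:Counting_Critical_Paths} and Theorem~\ref{thm:Initial_Ideal_Equality}, from which it follows that over characteristic zero this Hilbert series coincides with the minimal series above, because the characteristic-zero initial ideal is $(M_{n,\mathbf{m},1})$.

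For the ``if'' direction I would argue as follows. Suppose the characteristic-$p$ and characteristic-zero initial ideals agree. Then their Hilbert series agree, and by Remark~\ref{rem:Hilbert_series_via_initial_ideals_prelims} so do the Hilbert series of $R/I_{n,\mathbf{m},1}$ in the two characteristics. Since the latter equals the minimum $\big[(1-t)\HS(R/P_{n,\mathbf{m}};t)\big]$ over characteristic zero, the same holds in characteristic $p$. Attaining this minimum is precisely the condition that every multiplication map $\mu_{\ell,d}$ by $\ell = x_1 + \cdots + x_n$ has maximal rank, which is the WLP (using that testing with this specific linear form suffices for monomial ideals).

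For the ``only if'' direction I would invoke the classification recalled in~\eqref{eq:wlp}: the WLP in characteristic $p$ forces $p > \lfloor (n(m-1)+1)/2 \rfloor$. An elementary computation shows that for $n \geq 5$ and $m \geq 2$ this lower bound strictly exceeds $m$, so $p > m$. By Remark~\ref{rem:prime_factors_gb_coeffs}, every coefficient of every $g_s \in G_{n,\mathbf{m},1}$ over $\mathbb{Q}$ has a reduced form whose numerator and denominator involve only primes at most $m$, hence units modulo $p$. Proposition~\ref{prop:GB_mod_p} then guarantees that the reduction of $G_{n,\mathbf{m},1}$ modulo $p$ remains a Gr\"obner basis with identical leading monomials, yielding the inclusion of the characteristic-$p$ initial ideal over the characteristic-zero one. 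Combined with the equality of Hilbert series that the WLP forces (again via Remark~\ref{rem:Hilbert_series_via_initial_ideals_prelims} and the minimum characterization above), this inclusion must be an equality.

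The main obstacle I anticipate is that equality of Hilbert series alone does not imply equality of the initial ideals; one additionally needs the inclusion coming from reducing the characteristic-zero Gr\"obner basis modulo $p$, and this reduction is only legitimate once $p$ exceeds every prime occurring in the coefficients, namely $p > m$. Verifying that the WLP condition $p > \lfloor (n(m-1)+1)/2 \rfloor$ automatically secures $p > m$ for every $n \geq 5$ and $m \geq 2$ is therefore the linchpin of the argument, and is precisely the reason the hypothesis $n \geq 5$ cannot be dropped.
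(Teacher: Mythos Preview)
Your proof is correct and tracks the paper's argument closely: both rely on the Kustin--Vraciu/Lundqvist--Nicklasson classification~\eqref{eq:wlp}, on Proposition~\ref{prop:GB_mod_p} to push the characteristic-zero Gr\"obner basis across to $\mathbb{F}_p$, and on the Hilbert-series reformulation of the WLP. The one genuine variation is in how you bound the primes appearing in the coefficients of $G_{n,\mathbf{m},1}$. The paper argues that every such prime is at most $\deg(g)$ and then invokes Remark~\ref{rem:largest_degree_of_GB_element} to bound $\deg(g)$ by $\lfloor (n(m-1)+1)/2\rfloor$; you instead invoke Remark~\ref{rem:prime_factors_gb_coeffs} to bound the primes by $m$ and then verify the elementary inequality $\lfloor (n(m-1)+1)/2\rfloor \geq \lfloor (5m-4)/2\rfloor > m$ for $n\ge 5$, $m\ge 2$. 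Both routes land on the same conclusion, and your detour through Remark~\ref{rem:prime_factors_gb_coeffs} has the merit of citing a result already isolated in the paper. One small redundancy: once Proposition~\ref{prop:GB_mod_p} applies, the reduction of $G_{n,\mathbf{m},1}$ is a Gr\"obner basis with the \emph{same} leading monomials, so you get equality of initial ideals immediately, not merely an inclusion; the supplementary Hilbert-series comparison you append at the end of the ``only if'' direction is therefore unnecessary (though not incorrect).
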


\begin{proof} 
By \eqref{eq:wlp}, $R/P_{n,\mathbf{m}}$ has the WLP if and only if $p > \left\lfloor \frac{n(m-1) + 1}{2} \right\rfloor$. Moreover, applying the Buchberger's criterion, the initial ideal of $I_{n,\mathbf{m},1}$, viewed as an ideal in $\mathbf{k}[x_1,\ldots,x_n]$, agrees with the initial ideal of $I_{n,\mathbf{m},1}$ when viewed as an ideal in $\mathbf{k}'[x_1,\ldots,x_n]$, where $\mathbf{k}'$ is an extension field of $\mathbf{k}$. Hence, the structure of the initial ideal and the presence of the WLP depend only on the characteristic of $\mathbf{k}$. In particular, we may assume $\mathbf{k} = \mathbb{F}_p$.
Now, let $g$ be an element of the Gr\"obner basis of $I_{n,\mathbf{m},1}$ as described in Theorem~\ref{thm:Reduced_GB}. The largest prime dividing any coefficient of $g$ is at most $\deg(g)$. By Remark~\ref{rem:largest_degree_of_GB_element}, the maximal degree attained by such $g$ is  
$\left\lceil \frac{n(m-1) + 1 - 1}{2} \right\rceil = \left\lceil \frac{n(m-1)}{2} \right\rceil = \left\lfloor \frac{n(m-1) + 1}{2} \right\rfloor$.
Therefore, by Proposition~\ref{prop:GB_mod_p}, the initial ideal of $R/I_{n,\mathbf{m},1}$ remains unchanged from the characteristic zero case whenever $p$ exceeds this bound. Conversely, if $p$ is less than or equal to this number, then $R/P_{n,\mathbf{m}}$ fails the WLP.

Since the WLP of $R/P_{n,\mathbf{m}}$ is determined by the Hilbert series of $R/I_{n,\mathbf{m},1}$, and $R/P_{n,\mathbf{m}}$ has the WLP in characteristic zero, it follows that the Hilbert series of $R/I_{n,\mathbf{m},1}$ must differ from the characteristic zero case for $p \leq \left\lfloor \frac{n(m-1) + 1}{2} \right\rfloor$. Consequently, the initial ideal of $I_{n,\mathbf{m},1}$ also differs in this range.

In summary, $R/P_{n,\mathbf{m}}$ has the WLP and $I_{n,\mathbf{m},1}$ shares the characteristic zero initial ideal if and only if $p > \left\lfloor \frac{n(m-1) + 1}{2} \right\rfloor$, whereas $R/P_{n,\mathbf{m}}$ fails the WLP and $I_{n,\mathbf{m},1}$ has a different initial ideal compared to the characteristic zero case if and only if $p \leq \left\lfloor \frac{n(m-1) + 1}{2} \right\rfloor$.
\end{proof}
The technique from Theorem~\ref{thm:Wlp_cond} also applies to the mixed degree case, as follows.
\begin{proposition}
    Let $G_{n,\mathbf{m},1}$ be the Gr\"obner basis of $I_{n,\mathbf{m},1}$ as described in Theorem 
    \ref{thm:Reduced_GB}, but  
    where the denominators have been cleared so that all coefficients are integers. Let $p$ be a prime and let $\mathbf{k}$ be a field of characteristic $p$. If none of the coefficients of the leading terms of $G_{n,\mathbf{m},1}$ are divisible by $p$, then 
    $\mathbf{k}[x_1,\ldots,x_n]/P_{n,\mathbf{m}}$ has the WLP.
\end{proposition}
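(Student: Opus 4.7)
The plan is to combine Proposition~\ref{prop:GB_mod_p} with the characteristic-zero results established earlier. Concretely, the hypothesis about leading coefficients is exactly the condition needed to invoke that proposition, yielding that the reduction of $G_{n,\mathbf{m},1}$ modulo $p$ is a Gröbner basis for the reduction of $I_{n,\mathbf{m},1}$ to $\mathbf{k}[x_1,\ldots,x_n]$. In particular, the initial ideal $\ini(I_{n,\mathbf{m},1})$ over $\mathbf{k}$ coincides with its counterpart over $\mathbb{Q}$.

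From this identification of initial ideals, Remark~\ref{rem:Hilbert_series_via_initial_ideals_prelims} gives that the Hilbert series $\HS(R/I_{n,\mathbf{m},1};t)$ is independent of the characteristic (as long as the hypothesis holds). Since $\HS(R/P_{n,\mathbf{m}};t)$ is manifestly characteristic-independent (it is determined by a monomial ideal), we can transport the characteristic-zero Hilbert series identity to characteristic $p$. Specifically, Corollary~\ref{cor:has_SLP} shows that in characteristic zero $R/P_{n,\mathbf{m}}$ has the SLP, which implies in particular
\begin{equation*}
\HS(R/I_{n,\mathbf{m},1};t) \;=\; \bigl[(1-t)\,\HS(R/P_{n,\mathbf{m}};t)\bigr].
\end{equation*}
By the previous paragraph, this identity now holds over $\mathbf{k}$ as well.

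To conclude the WLP over $\mathbf{k}$, I would invoke the reduction step for monomial ideals recalled just before Remark~\ref{rem:Properties_Artinian_monomial_CIs}: by \cite[Theorem 2.2]{Nicklasson_SLP_two_variables}, for a monomial Artinian algebra it suffices to test the WLP with the particular linear form $\ell = x_1 + \cdots + x_n$. The multiplication maps $\mu_{\ell,d}$ have maximal rank in every degree $d$ if and only if
\begin{equation*}
\HS\bigl(R/(P_{n,\mathbf{m}},\ell);t\bigr) \;=\; \bigl[(1-t)\,\HS(R/P_{n,\mathbf{m}};t)\bigr],
\end{equation*}
and since $(P_{n,\mathbf{m}},\ell) = I_{n,\mathbf{m},1}$, this is exactly what the previous step provides. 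Hence $R/P_{n,\mathbf{m}}$ has the WLP over $\mathbf{k}$.

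There is essentially no serious obstacle here; the proof is a short assembly of three already-proven ingredients (Proposition~\ref{prop:GB_mod_p}, Corollary~\ref{cor:has_SLP}, and the monomial-ideal reduction of \cite[Theorem 2.2]{Nicklasson_SLP_two_variables}). The only point requiring a bit of care is to ensure that the hypothesis is phrased correctly: it is only the leading-term coefficients that need to be coprime to $p$, which is precisely what Proposition~\ref{prop:GB_mod_p} requires in order to preserve the Gröbner basis property upon reduction; one does not need the tail coefficients to be coprime to $p$, since they do not affect the leading ideal.
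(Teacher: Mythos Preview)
Your proof is correct and follows essentially the same approach the paper intends: the paper does not give a separate proof of this proposition but simply states that ``the technique from Theorem~\ref{thm:Wlp_cond} also applies to the mixed degree case,'' and your argument---invoking Proposition~\ref{prop:GB_mod_p} to preserve the initial ideal, then transporting the characteristic-zero Hilbert series identity via Corollary~\ref{cor:has_SLP} and the monomial-ideal reduction---is precisely that technique written out.
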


However, note that an if and only if statement of the form as in Theorem~\ref{thm:Wlp_cond} is no longer true when mixed degrees are allowed.
\begin{example}
Consider the family of algebras $B_a = \mathbb{F}_3[x_1, \dots, x_5]/(x_1^2, x_2^2, x_3^2, x_4^{1+3a}, x_5^{2+3a})$ parametrized by $a \geq 0$. When $a = 0$, we have $B_0 \cong R/P_{4,2}$, which has the WLP by 
\cite[Theorem 5.1]{Kustin}. Then 
\cite[Theorem 4.8]{LUNDQVIST2019213} implies that $B_a$ has the WLP for all $a \geq 0$. 
However, the initial ideal $\ini(I_{5,\mathbf{m}_a,1})$, where $\mathbf{m}_a = (2,2,2,1+3a,2+3a)$, differs over $\mathbb{F}_3$ compared to over $\mathbb{Q}$ when $a \geq 1$. Indeed, for all $a \geq 1$, the critical monomial $s = x_3x_4^2$ corresponds to a Gr\"obner basis element (after clearing denominators) given by
\[
g_s = 3x_3(x_4+x_5)^2 + (x_4+x_5)^3.
\]
Over $\mathbb{Q}$, $g_s$ has $x_3x_4^2$ as its leading monomial, but over $\mathbb{F}_3$, it appears that $x_4^3$ becomes the leading monomial instead, which is not a leading monomial over $\mathbb{Q}$. One can verify that $x_4^3$ is a leading monomial over $\mathbb{F}_3$ by noting that 
\[
x_4^3 + x_5^3 = (x_1 + x_2 + x_3 + x_4 + x_5)^3 \in I_{5,\mathbf{m}_a,1},
\]
thus showing that the initial ideals differ even though the algebra has the WLP in both cases.
\end{example}

\noindent{\bf Acknowledgments.}
Experiments with the computer algebra system Macaulay2~\cite{M2} were of great use during the preparation of this paper. The authors thank Eric Dannetun for pointing out the reference \cite{142657} to us. S.L. was supported by the Swedish Research Council grant VR2022-04009. 
M.O. and F.M. were partially supported by the FWO grants G0F5921N (Odysseus) and G023721N, and by the KU Leuven grant iBOF/23/064, and they acknowledge the hospitality of the Mathematics
Department at Stockholm University, where part of this work was carried
out. F.M. also gratefully acknowledges the travel support by the Wenner-Gren Foundation.

\bibliographystyle{abbrv}
\bibliography{Motzkin.bib}

\end{document}